\newcommand{\C}{{\mathbb C}}
\newcommand{\K}{{\mathbb K}}
\newcommand{\N}{{\mathbb N}}
\newcommand{\R}{{\mathbb R}}
\newcommand{\Q}{{\mathbb Q}}
\DeclareMathAlphabet{\mathpzc}{OT1}{pzc}{m}{it}
\newcommand{\abs}[2][\empty]{\ifx#1\empty\left|#2\right|%
\else#1\vert #2 #1\vert\fi}
\newcommand{\Ann}{\mathop{\mathrm{Ann}}}
\newcommand{\Cnt}[1]{{\cal C}^{#1}}
\newcommand{\co}[1]{{#1}^{c}}
\newcommand{\defstyle}[1]{{\bf #1}}
\newcommand{\eps}{\varepsilon}
\newcommand{\ideal}{\unlhd}
\newcommand{\idealproper}{\lhd}
\renewcommand{\implies}{\Rightarrow}
\newcommand{\inner}[3][\empty]{\ifx#1\empty\left\langle #2,#3\right\rangle%
\else#1\langle #2,#3 #1\rangle\fi}
\newcommand{\Inv}{\mathop{\mathrm{Inv}}}
\newcommand{\kar}[1]{\chi_{#1}}
\newcommand{\Ker}{\mathop{\mathrm{Ker}}}
\newcommand{\lspan}[2][\empty]{\ifx#1\empty\left\langle#2\right\rangle%
\else#1\langle #2 #1\rangle\fi}
\newcommand{\Max}{{\mathcal M}}
\newcommand{\norm}[2][\empty]{\ifx#1\empty\left\Vert#2\right\Vert%
\else#1\Vert #2 #1\Vert\fi}
\newcommand{\rad}[1]{\sqrt{#1}}
\newcommand{\radpart}[1]{{#1}^{\rad{\phantom{.}}}}
\newcommand{\val}[1]{\ifx#1.v\else v(#1)\fi}
\newcommand{\zpart}[1]{{#1}^z}
\newcommand{\zclosure}[1]{{#1}_z}
\newcommand{\Gen}{{\mathcal G}}
\newcommand{\GenC}{\widetilde\C}
\newcommand{\GenK}{\widetilde\K}
\newcommand{\GenR}{\widetilde\R}
\newcommand{\Mod}{{{\mathcal E}_M}}
\newcommand{\Null}{{\mathcal N}}
\newcommand{\sharpnorm}[2][\empty]{\abs[#1]{#2}_{\mathrm e}}
\newcommand{\ster}[1]{{{}^* \mskip-1mu #1}}
\newtheorem*{df}{Definition}
\newtheorem{thm}{Theorem}[section]
\newtheorem{prop}[thm]{Proposition}
\newtheorem{lemma}[thm]{Lemma}
\newtheorem*{cor}{Corollary}
\newtheorem{ex}[thm]{Example}
\theoremstyle{remark}
\newtheorem*{rem}{Remark}
\begin{document}
\title{Ideals in the ring of Colombeau generalized numbers}
\author{Hans~Vernaeve\footnote{Supported by FWF (Austria), grants M949-N18 and  Y237-N13.}\\
\\
Institut f\"ur Grundlagen der Bauingenieurwissenschaften\\
Universit\"at Innsbruck\\
Technikerstra{\ss}e 13\\
A-6020 Innsbruck, Austria}
\date{}
\maketitle

\begin{abstract}
In this paper, the structure of the ideals in the ring of Colombeau generalized numbers is investigated. Connections with the theories of exchange rings, Gelfand rings and lattice-ordered rings are given. Characterizations for prime, projective, pure and topologically closed ideals are given, answering in particular the questions about prime ideals in \cite{AJ2001}. Also $z$-ideals in the sense of \cite{Mason73} are characterized. 
The quotient rings modulo maximal ideals are shown to be canonically isomorphic with nonstandard fields of asymptotic numbers. Finally, a detailed study of the ideals allows us to prove that (under some set-theoretic assumption) the Hahn-Banach extension property does not hold for a large class of topological modules over the ring of Colombeau generalized numbers.
\end{abstract}

\section{Introduction}
In \cite{AJ2001}, J.~Aragona and S.O.~Juriaans made a detailed study of algebraic properties of the topological ring of Colombeau generalized constants $\GenK$, showing, amongst other things, that the maximal ideals in $\GenK$ are exactly the topological closures of the prime ideals in $\GenK$. Subsequently, the minimal prime ideals were characterized \cite{AJOS2006}. The main focus of investigations on ideals in $\GenK$ has however been on the maximal ideals. 
Many of the properties of maximal and prime ideals can be seen in a more general context. E.g., the characterization of the maximal ideals is an almost direct consequence of the characterization of the (topologically) closed ideals (theorems \ref{thm_closed_characterization} and \ref{thm_max_ideals}); prime ideals can be characterized as those ideals that are both irreducible and radical (theorem \ref{thm_prime_characterization}). The radical ideals in turn are exactly the idempotent ideals (proposition \ref{prop_radical}), and the irreducible ideals are exactly the pseudoprime ideals (theorem \ref{thm_pseudoprime}). The pure ideals are exactly the ideals generated by idempotents (see the next section for the definitions of the used terms). The minimal prime ideals are exactly the pure prime ideals (prop.\ \ref{prop_minimal_primes}). The projective ideals are the ideals generated by a family of mutually orthogonal idempotents (theorem \ref{thm_projective_ideals}). Some of these characterizations follow easily from the theories of exchange rings and lattice-ordered rings. We answer those questions in \cite{AJ2001} about prime ideals in $\GenK$ that were still open: the Krull dimension of $\GenK$ is infinite; however, if one assumes the continuum hypothesis, there are minimal primes that are maximal. The bijective correspondence of maximal ideals $M$ with certain ultrafilters on $(0,1)$, which is implicitly already in \cite{AJ2001}, together with the characterization of the closed ideals, yields a canonical isomorphism between quotients $\GenK/M$ and nonstandard fields of asymptotic numbers ${}^\rho \K$ \cite{Lightstone, Todorov99, Todorov2004} (theorem \ref{thm_quotient_with_max_is_nonstandard_field}). In analogy with a $z$-ideal in the ring $\Cnt{}(X)$ of continuous functions on a topological space $X$, G.~Mason \cite{Mason73} introduced a notion of $z$-ideal in an arbitrary commutative ring with $1$. The $z$-ideals in $\GenK$ have a particularly concrete characterization replacing the zeroes (points of $X$) of the elements of $\Cnt{}(X)$ by subsets of $(0,1)$.
Finally, we have a look at ideals from the point of view of the theory of topological $\GenK$-modules \cite{Garetto2005}:
in contrast with the situation in classical Banach spaces, the Hahn-Banach extension property does not hold in $\GenK$, viewed as a module over itself (theorem \ref{thm_Hahn-Banach}); hence the Hahn-Banach extension property also does not hold in any $\GenK$-module which contains $\GenK$ as a topological submodule. This answers a long-standing open question (e.g.\ raised in \cite{Mayerhofer2006}) in the negative.

\section{Preliminaries}
\subsection{The rings $\GenR$ and $\GenC$}
We denote by $\K$ the field $\R$ or $\C$ and by $\GenK$ the ring $\GenR$ or $\GenC$ of real, resp.\ complex, Colombeau generalized numbers. Recall that \cite[1.2.31]{GKOS} by definition $\GenK=\Mod(\K)/\Null(\K)$, where
\begin{align*}
\Mod(\K)&=\{(x_\eps)_\eps\in \K^{(0,1)}: (\exists a\in\R)(\exists\eps_0\in(0,1))(\forall\eps\le\eps_0)(\abs{x_\eps}\le\eps^a)\}\\
\Null(\K)&=\{(x_\eps)_\eps\in \K^{(0,1)}: (\forall a\in\R)(\exists\eps_0\in(0,1))(\forall\eps\le\eps_0)(\abs{x_\eps}\le\eps^a)\}.
\end{align*}
The ring $\GenK$ arises naturally as the ring of constants of the Colombeau (differential) algebras $\Gen(\Omega)$, where $\Omega$ is a non-empty open subset of $\R^d$ \cite[1.2.35]{GKOS}.\\
We denote by $\alpha\in\GenR$ the generalized number with representative $(\eps)_\eps$.\\
For $S\subseteq(0,1)$, we denote by $e_S\in\GenR$ the generalized number with as representative the characteristic function $(\kar{S}(\eps))_\eps$, and $\co S=(0,1)\setminus S$. Clearly, $e_S\ne 0$ iff $0\in \overline S$ and $e_S\ne 1$ iff $0\in\overline{\co S}$. For a subset $A$ of a topological space, we denote by $\overline A$ the topological closure of $A$.\\
As in~\cite{AJ2001}, $\mathcal S=\{S\subseteq (0,1): 0\in \overline S \cap \overline{\co S}\}$ and $P_*(\mathcal S)$ is the set of all $\mathcal F\subseteq\mathcal S$ which are closed under finite union and such that for each $S\in\mathcal S$, either $S$ or $\co S$ belongs to $\mathcal F$. For $\mathcal F\subseteq\mathcal S$, the ideal generated by $\{e_S: S\in\mathcal F\}$ is denoted by $g({\mathcal F})$.\\
$\GenK$ is a reduced (or semiprime) ring, i.e., a ring without (nonzero) nilpotent elements. Elements in $\GenK$ are either invertible, either zero divisors \cite[1.2.39]{GKOS}. $\GenK$ is neither noetherian, neither artinian \cite[Thm.~4.5]{AJ2001}. The Jacobson radical of $\GenK$ vanishes \cite[Thm.~4.12]{AJ2001}. $\GenK$ is a ring with uncountably many maximal ideals \cite[Thm.~4.28]{AJ2001}. $\GenK$ is not Von Neumann regular \cite{AJOS2006}. The set of idempotent elements of $\GenK$ (i.e., $\{e\in\GenK: e^2=e\}$) equals $\{e_S: S\subseteq (0,1)\}$ \cite{AJOS2006}.\\
$\GenK$ is a complete topological ring with the so-called sharp topology \cite{AJ2001,Biagioni1990,Scarpalezos93}, which can be defined as follows. Let $x\in\GenK$ and $(x_\eps)_\eps$ a representative of $x$. Let
\[\val{x}=\sup\{a\in\R: (\exists\eps_0\in (0,1))(\forall \eps\le\eps_0)(\abs{x_\eps}\le\eps^a)\}.\]
Then $\sharpnorm{x}=e^{-\val{x}}$ defines an ultrapseudonorm \cite{Garetto2005} on $\GenK$ which induces a topology through the ultrametric $d(x,y)=\sharpnorm{x-y}$.

\subsection{Exchange rings and Gelfand rings}
The direct sum (of rings or modules) is denoted by $\oplus$.\\
A commutative ring $R$ with 1 is an exchange ring (or a clean ring, or a topologically boolean ring) \cite[Thm.~1.7]{McGovern2006} iff one of the following equivalent conditions is satisfied:
\begin{enumerate}
\item for each $a\in R$, there exists an idempotent $e\in R$ such that $a + e$ is invertible
\item for each $a\in R$, there exists an idempotent $e\in R$ such that $a-e$ is invertible
\item for each ideal $I$ of $R$ and $a\in R$ with $a-a^2\in I$, there exists and idempotent $e\in R$ such that $e-a\in I$ (idempotents can be lifted modulo every ideal)
\item for each $M\ne N$ maximal ideals of $R$, there exists an idempotent $e\in M\setminus N$
\item for each $a,b\in R$ with $a + b = 1$, there exists an idempotent $e\in R$ such that $e\in aR$ and $1-e\in bR$
\item let $M$, $N$, $P$, $Q_i$ be $R$-modules such that $M=N\oplus P=\bigoplus_{i=1}^m Q_i$ with $N\cong R$ ($m\in\N$). Then there exist submodules $Q_i'\subseteq Q_i$ ($i=1$,\dots, $m$) such that $M= N\oplus \bigoplus_{i=1}^m Q_i'$ ($R$ has the finite exchange property).
\end{enumerate}
\begin{rem}
For arbitrary rings, the previous conditions are not necessarily equivalent. The class of commutative exchange rings with 1 is closed under direct products and homomorphic images \cite{McGovern2006}.
\end{rem}

\begin{prop}\label{prop_GenK_is_exchange}
$\GenK$ is an exchange ring.
\end{prop}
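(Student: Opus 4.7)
The plan is to verify condition~2 of the characterization: given $a\in\GenK$, exhibit an idempotent $e$ such that $a-e$ is invertible. Since the idempotents of $\GenK$ are exactly the $e_S$ ($S\subseteq(0,1)$), the task reduces to choosing a suitable subset $S$ of $(0,1)$ from a representative $(a_\eps)_\eps$ of $a$.

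The natural choice is a thresholding of the representative: I would set
\[
S=\{\eps\in(0,1): \abs{a_\eps}<1/2\},
\]
and take $e=e_S$. Then on $S$ we have $\abs{a_\eps-\kar{S}(\eps)}=\abs{a_\eps-1}\ge 1-\abs{a_\eps}>1/2$, while on $\co S$ we have $\abs{a_\eps-\kar{S}(\eps)}=\abs{a_\eps}\ge 1/2$. Hence the representative $(a_\eps-\kar{S}(\eps))_\eps$ of $a-e_S$ is uniformly bounded away from $0$ by $1/2$, so the family $(1/(a_\eps-\kar{S}(\eps)))_\eps$ is bounded by $2$ and in particular moderate. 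It therefore represents an element of $\GenK$ which is the inverse of $a-e_S$. This shows $a-e_S$ is invertible in $\GenK$, yielding condition~2 of the characterization.

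There is essentially no obstacle: the argument is a direct application of the description of idempotents of $\GenK$ together with the standard criterion that an element of $\GenK$ is invertible as soon as it has a representative bounded away from $0$. The only thing to note is that the choice of $S$ depends on the representative, but this does not matter because we only need to exhibit some idempotent, not a canonical one; any two representatives of $a$ differ by a null net, and the resulting $S$'s differ by a negligible modification.
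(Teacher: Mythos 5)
Your proof is correct and is essentially the paper's argument: both threshold a representative at $1/2$ to produce the idempotent, the only cosmetic difference being that the paper verifies condition~1 (showing $\abs{e_T+a}\ge 1/2$ for $T=\{\eps:\abs{a_\eps}\le 1/2\}$ via the lattice operations on $\GenK$) while you verify the equivalent condition~2 directly on representatives. Both then conclude by the standard invertibility criterion for elements bounded away from zero.
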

\begin{proof}
We show that property 1 holds. Let $a\in \GenK$ with representative $(a_\eps)_\eps$. Let $T=\{\eps\in(0,1): \abs{a_\eps}\le 1/2\}$. Then $e_T^2=e_T$, $e_T\abs{a}\le e_T/2$ and $e_{\co T}\abs{a}\ge e_{\co T}/2$. Hence
\[\abs{e_T + a}= e_T\abs{1 + a} + e_{\co T}\abs{a}\ge e_T-e_T\abs{a} + e_{\co T}\abs{a}\ge \frac{e_T + e_{\co T}}{2}=\frac{1}{2},\]
so $e_T + a$ is invertible \cite[Thm.\ 1.2.38]{GKOS}.
\end{proof}

A commutative ring $R$ with 1 is a Gelfand ring \cite[Prop.~1.3]{McGovern2006} iff one of the following equivalent conditions is satisfied:
\begin{enumerate}
\item for each $M\ne N$ maximal ideals of $R$, there exist $a\in R\setminus M$, $b\in R\setminus N$ such that $ab=0$.
\item for each $a,b\in R$ with $a+b=1$, there exist $r,s\in R$ such that $(1+ar)(1+bs)=0$
\item every prime ideal is contained in a unique maximal ideal.
\end{enumerate}
As every commutative exchange ring with 1 is a Gelfand ring \cite[Thm.~1.7]{McGovern2006}, $\GenK$ is a Gelfand ring.

\subsection{$l$-rings and $f$-rings}
$\GenR$ is a partially ordered ring for the order $\le$, where $a\le b$ iff there exist representatives $(a_\eps)_\eps$ of $a$ and $(b_\eps)_\eps$ of $b$ such that $a_\eps\le b_\eps$, $\forall\eps\in(0,1)$.
This turns $\GenR$ into an $l$-ring (or lattice-ordered ring): for $a$, $b$ $\in\GenR$, the supremum $a\vee b$ is given on representatives as $(\max(a_\eps,b_\eps))_\eps$, the infimum $a\wedge b$ is given on representatives as $(\min(a_\eps,b_\eps))_\eps$.\\
A commutative ring $R$ with 1 is an $f$-ring if $R$ is an $l$-ring satisfying one of the following equivalent conditions \cite{Banaschewski2004, BKW}:
\begin{enumerate}
\item $R$ is isomorphic with a subdirect product of totally ordered rings, i.e., $R$ is isomorphic (as an ordered ring) with a subring $S$ of a direct product $\prod_\lambda R_\lambda$ of totally ordered rings $R_\lambda$, and each projection $S\to R_\lambda$ is surjective
\item for each $a$, $b$, $c$ $\in R$ with $a\ge 0$, $b\ge 0$, $c\ge 0$, $a\wedge b=0$ implies that $a\wedge b c = 0$
\item for each $a$, $b$, $c$ $\in R$ with $c\ge 0$, $(a\wedge b)c=ac\wedge bc$.
\end{enumerate}
\begin{prop}
$\GenR$ is an $f$-ring.
\end{prop}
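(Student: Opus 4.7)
The plan is to verify condition~3 of the $f$-ring definition directly on representatives: since $\GenR$ has already been noted to be an $l$-ring, it suffices to check that $(a\wedge b)c = ac\wedge bc$ whenever $a,b,c\in\GenR$ with $c\ge 0$. The identity rests on the elementary real-variable fact that $\min(a_\eps,b_\eps)\,c_\eps = \min(a_\eps c_\eps, b_\eps c_\eps)$ for $a_\eps,b_\eps\in\R$ and $c_\eps\ge 0$.

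First I would use the hypothesis $c\ge 0$ to select a representative $(c_\eps)_\eps$ of $c$ with $c_\eps\ge 0$ for every $\eps\in(0,1)$. This is routine from the definition of $\le$ on $\GenR$: a representative only nonnegative modulo a null net can be corrected by subtracting that null net without changing the class. I would then take arbitrary representatives $(a_\eps)_\eps$ of $a$ and $(b_\eps)_\eps$ of $b$.

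Applying the pointwise identity at each $\eps$ shows that the nets $(\min(a_\eps,b_\eps)\,c_\eps)_\eps$ and $(\min(a_\eps c_\eps, b_\eps c_\eps))_\eps$ coincide term by term. By the explicit description of $\wedge$ recalled just before the proposition, the former represents $(a\wedge b)c$ while the latter represents $ac\wedge bc$, so the two generalized numbers are equal in $\GenR$.

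There is essentially no obstacle: the proof is a direct reduction to a pointwise calculation in $\R$, and the only mild subtlety is producing a pointwise nonnegative representative of $c$. Alternatively one could verify condition~2 by the same strategy, but condition~3 yields a cleaner identity and seems the shortest route.
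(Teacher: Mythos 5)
Your proof is correct and is essentially identical to the paper's: both verify condition~3 by choosing a pointwise nonnegative representative of $c$ and reducing to the scalar identity $\min(a_\eps,b_\eps)c_\eps=\min(a_\eps c_\eps,b_\eps c_\eps)$. No issues.
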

\begin{proof}
Let $a$, $b$, $c$ $\in \GenR$ with $c\ge 0$. Fix representatives $(a_\eps)_\eps$ of $a$, $(b_\eps)_\eps$ of $b$ and $(c_\eps)_\eps$ of $c$ with $c_\eps\ge 0$, $\forall\eps$. Then $\min(a_\eps, b_\eps)c_\eps =\min(a_\eps c_\eps, b_\eps c_\eps)$, $\forall\eps$, hence $(a\wedge b)c=ac\wedge bc$.
\end{proof}

For $a\in R$, we denote $\Ann(a)=\{x\in R: xa=0\}$.

A commutative $f$-ring $R$ with $1$ is called normal if it satisfies one of the following equivalent conditions \cite{Larson88}:
\begin{enumerate}
\item for each $a\in R$, $R=\Ann(a\vee 0)+\Ann(a\wedge 0)$
\item for each $a,b \in R$ with $a\wedge b =0$, $R=\Ann(a)+\Ann(b)$
\end{enumerate}
A reduced commutative $f$-ring with $1$ is normal iff \cite{Huijsmans74}
\begin{enumerate}
\item for each $a,b \in R$ with $a b =0$, $R=\Ann(a)+\Ann(b)$
\item for each $P_1\ne P_2$ minimal prime ideals of $R$, $R=P_1+P_2$
\item each proper prime ideal $P$ of $R$ contains a unique minimal prime ideal
\item for each $a,b\in R$, $\Ann(ab)=\Ann(a)+\Ann(b)$.
\end{enumerate}

\begin{lemma}\label{lemma_zero_divisors}
Let $x$, $y$ $\in\GenK$. The following are equivalent:
\begin{enumerate}
\item $xy=0$
\item there exists $S\subseteq(0,1)$ such that $xe_S= 0$ and $ye_{\co S}=0$
\item $\Ann(x)+\Ann(y)=\GenK$
\item $\abs{x}\wedge\abs{y}=0$.
\end{enumerate}
\end{lemma}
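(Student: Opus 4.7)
The plan is to close the chain $(2) \Rightarrow (3) \Rightarrow (1) \Rightarrow (4) \Rightarrow (2)$. The first three implications are formal ring-theoretic consequences; only $(4) \Rightarrow (2)$ uses the concrete representative picture of $\GenK$.

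For $(2) \Rightarrow (3)$, since $e_S + e_{\co S} = 1$ and the hypothesis places $e_S \in \Ann(x)$, $e_{\co S} \in \Ann(y)$, we get $1 \in \Ann(x) + \Ann(y)$. For $(3) \Rightarrow (1)$, decomposing $1 = a + b$ with $ax = 0$, $by = 0$ gives $xy = (a+b)xy = (ax)y + x(by) = 0$. For $(1) \Rightarrow (4)$, from $xy = 0$ one obtains $\abs{x}\abs{y} = 0$, and since $0 \le \abs{x}\wedge\abs{y} \le \abs{y}$ in the $f$-ring $\GenR$, multiplying by the non-negative element $\abs{x}\wedge\abs{y}$ yields $(\abs{x}\wedge\abs{y})^2 \le (\abs{x}\wedge\abs{y})\abs{y} \le \abs{x}\abs{y} = 0$; reducedness of $\GenR$ then forces $\abs{x}\wedge\abs{y} = 0$.

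The main content is $(4) \Rightarrow (2)$. I fix representatives $(x_\eps)_\eps$ of $x$ and $(y_\eps)_\eps$ of $y$ and set
\[
S = \{\eps \in (0,1) : \abs{x_\eps} \le \abs{y_\eps}\}.
\]
Since the lattice infimum in $\GenR$ is computed componentwise on representatives, condition (4) translates to $(\min(\abs{x_\eps},\abs{y_\eps}))_\eps \in \Null(\R)$. By the very definition of $S$, the values $\abs{x_\eps}\kar{S}(\eps)$ and $\abs{y_\eps}\kar{\co S}(\eps)$ are pointwise bounded by $\min(\abs{x_\eps},\abs{y_\eps})$: each vanishes off the indicated set and equals the minimum on it. Negligibility of $\min(\abs{x_\eps},\abs{y_\eps})$ then gives $xe_S = 0$ and $ye_{\co S} = 0$, which is (2).

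The only real obstacle is hitting upon the correct partition of $(0,1)$; once one thinks of splitting according to which of $\abs{x_\eps}$, $\abs{y_\eps}$ realises the minimum, everything falls out. A minor point to keep in mind is that the set $S$ produced depends on the choice of representatives, but since (2) only asserts existence of some such $S$, this is harmless.
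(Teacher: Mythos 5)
Your proof is correct. The core combinatorial idea is the same as the paper's: the partition of $(0,1)$ by $S=\{\eps:\abs{x_\eps}\le\abs{y_\eps}\}$, and the formal implications $(2)\Rightarrow(3)\Rightarrow(1)$ are verbatim what the paper does. The difference lies in how condition (4) is integrated. The paper proves $(1)\Rightarrow(2)$ directly from $xy=0$ (via $0\le(\abs{x}e_S)^2\le\abs{x}\abs{y}e_S=0$ and reducedness) and then disposes of $(1)\Leftrightarrow(4)$ by citing a general theorem on reduced $f$-rings. You instead route the cycle through (4): an elementary squeeze $(\abs{x}\wedge\abs{y})^2\le(\abs{x}\wedge\abs{y})\abs{y}\le\abs{x}\abs{y}=0$ plus reducedness for $(1)\Rightarrow(4)$, and a direct representative-level estimate for $(4)\Rightarrow(2)$ using that $\abs{x_\eps}\kar{S}(\eps)$ and $\abs{y_\eps}\kar{\co S}(\eps)$ are dominated by the negligible net $\min(\abs{x_\eps},\abs{y_\eps})$. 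What your version buys is self-containedness: no appeal to the $f$-ring literature, and the same amount of representative-level work (both proofs use reducedness and the componentwise description of $\wedge$ once each). What the paper's version buys is that the equivalence $(1)\Leftrightarrow(4)$ is exhibited as an instance of a general fact about reduced $f$-rings, consistent with the paper's theme of placing $\GenK$ in standard ring-theoretic frameworks.
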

\begin{proof}
$(1)\implies (2)$: fix representatives $(x_\eps)_\eps$ of $x$, $(y_\eps)_\eps$ of $y$ and let $S=\{\eps\in (0,1): \abs{x_\eps}\le\abs{y_\eps}\}$. Then $0\le (\abs{x}e_S)^2\le\abs{x}\abs{y}e_S=0$, so $xe_S=0$; similarly $ye_{\co S}=0$.\\
$(2)\implies (3)$: $e_S\GenK\subseteq \Ann(x)$ and $e_{\co S}\GenK\subseteq \Ann(y)$, so $\GenK=e_S\GenK + e_{\co S}\GenK \subseteq \Ann(x) + \Ann(y)$.\\
$(3)\implies (1)$: if $1= a + b$, for some $a, b\in \GenK$ with $ax=by=0$, then $xy=xy(a + b)=0$.\\
$(1)\Leftrightarrow (4)$: as $xy=0$ iff $\abs{x}\abs{y}=0$, we may suppose $x$, $y$ $\in\GenR$. The equivalence holds in any reduced $f$-ring \cite[Thm.~9.3.1]{BKW}, hence also in $\GenR$.
\end{proof}
\begin{cor}
$\GenR$ is a (reduced) normal $f$-ring.
\end{cor}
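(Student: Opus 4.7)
The plan is to reduce the statement directly to Lemma \ref{lemma_zero_divisors}. We already know from the preceding proposition that $\GenR$ is an $f$-ring, and from the preliminaries that $\GenK$ (hence $\GenR$) is reduced. So it remains only to verify normality.

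Among the equivalent characterizations of normality for a reduced commutative $f$-ring with $1$ (the Huijsmans list), the most convenient one here is condition (1): for all $a, b \in R$ with $ab = 0$ one has $R = \Ann(a) + \Ann(b)$. But this is exactly the implication $(1) \implies (3)$ already established in Lemma \ref{lemma_zero_divisors}, applied in the real case $x, y \in \GenR$. Hence the proof is essentially a one-line invocation of that lemma.

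The ``hard part'' has thus already been absorbed into Lemma \ref{lemma_zero_divisors}, whose key step was the construction, for a pair $x, y \in \GenK$ with $xy = 0$, of the set $S = \{\eps \in (0,1) : |x_\eps| \le |y_\eps|\}$ and the resulting decomposition $1 = e_S + e_{\co S}$ witnessing $\GenK = \Ann(x) + \Ann(y)$; no further work is required to deduce the corollary.
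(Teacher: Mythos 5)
Your proposal is correct and is exactly the paper's (implicit) argument: the corollary is stated immediately after Lemma \ref{lemma_zero_divisors} precisely because the implication $(1)\implies(3)$ of that lemma, specialized to $x,y\in\GenR$, verifies the Huijsmans criterion for normality, while the $f$-ring and reducedness properties were established beforehand. Nothing further is needed.
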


\subsection{Ideals}
Let $R$ be a commutative ring with $1$. We write $I\idealproper R$ iff $I$ is a proper ideal of $R$ (i.e., an ideal different from $R$ itself). A not necessarily proper ideal is denoted by $I\ideal R$.\\
We adopt the convention that $R$ is not a prime ideal of $R$.\\
$I\ideal R$ is called projective iff $I$ is projective as an $R$-module \cite{Cartan}.\\
$I\ideal R$ is called idempotent iff $I^2=I$.\\
We denote the radical of $I\ideal R$ by $\rad{I}=\{x\in R: (\exists n\in\N)(x^n\in I)\}=\bigcap_{I\subseteq P\atop P \text{ prime }} P$ (e.g., see \cite[0.18]{GJ}).\\
$I\ideal R$ is called radical (or semiprime) iff $I=\rad I$, or equivalently, iff $(\forall x\in R)(x^2\in I\implies x\in I)$.\\
We denote the annihilator ideal of $I\ideal R$ by $\Ann(I)=\{x\in R:(\forall a\in I)(xa=0)\}$.\\
$I\ideal R$ is called pseudoprime iff for each $a,b\in R$, $ab=0$ implies $a\in I$ or $b\in I$.\\
$I\ideal R$ is called irreducible (or meet-irreducible) iff for each $J, K\ideal \GenK$, $I=J\cap K$ implies $I=J$ or $I=K$ \cite[\S 6]{Matsumura}.\\
The ideal generated by $x_\lambda$, $\lambda\in \Lambda$ (for some index-set $\Lambda$) is denoted by $\lspan{x_\lambda: \lambda\in \Lambda}$.
\begin{prop}(e.g.~\cite[0.16]{GJ})\label{prop_prime_constructor}
Let $A\subset R$ closed under multiplication. Let $I$ be a proper ideal of $R$ with $I\cap A =\emptyset$. Then there exists an ideal $P\supseteq I$ maximal w.r.t.\ the property that $P\cap A =\emptyset$. $P$ is a prime ideal.
\end{prop}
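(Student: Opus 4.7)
The plan is a direct application of Zorn's lemma followed by the standard contradiction argument for primality; this is the textbook construction of prime ideals avoiding a multiplicative set (Krull, 1929), and the presence of the ambient ideal $I$ does not change anything substantial.

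First I would introduce the family
\[
\mathcal F = \{J \ideal R : I\subseteq J \text{ and } J\cap A = \emptyset\},
\]
ordered by inclusion. It is nonempty since $I\in\mathcal F$. Given any chain $(J_\lambda)_{\lambda\in\Lambda}$ in $\mathcal F$, the union $\bigcup_\lambda J_\lambda$ is again an ideal (unions of chains of ideals are ideals, as verification of closure under addition and scalar multiplication falls inside one chain member), it contains $I$, and it is disjoint from $A$: any alleged common element would already lie in some $J_\lambda$, contradicting $J_\lambda\cap A=\emptyset$. Zorn's lemma thus yields a maximal element $P\in\mathcal F$.

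Second I would show $P$ is prime. Suppose, aiming at a contradiction, that $x,y\in R\setminus P$ with $xy\in P$. Then the ideals $P+\lspan{x}$ and $P+\lspan{y}$ strictly contain $P$, and by maximality of $P$ in $\mathcal F$ each of them meets $A$. Pick
\[
a_1 = p_1 + r_1 x \in A, \qquad a_2 = p_2 + r_2 y \in A,
\]
with $p_1,p_2\in P$ and $r_1,r_2\in R$. Since $A$ is closed under multiplication, $a_1a_2\in A$. Expanding
\[
a_1 a_2 = p_1 p_2 + p_1 r_2 y + r_1 x p_2 + r_1 r_2 xy,
\]
the first three summands lie in $P$ because $p_1,p_2\in P$, while the last lies in $P$ because $xy\in P$. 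Hence $a_1 a_2\in P\cap A=\emptyset$, a contradiction. Finally, $P\ne R$ because any element of $A$ (assumed nonempty, else the statement is vacuous) would otherwise lie in $P\cap A$, so $P$ is a \emph{proper} prime ideal in the sense of the paper's conventions.

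There is no real obstacle: the only step requiring care is checking that $a_1 a_2$ lies in $P$, which is exactly where the hypothesis $xy\in P$ is used and where multiplicative closure of $A$ produces the contradiction. Everything else is bookkeeping with Zorn's lemma.
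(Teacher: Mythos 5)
Your proof is correct and is exactly the standard Zorn's-lemma-plus-multiplicative-closure argument that the paper delegates to its citation of Gillman--Jerison [0.16], so there is nothing to compare: the existence of $P$ via unions of chains and the primality via $a_1a_2\in P\cap A$ are both sound. The only cosmetic point is your parenthetical about $A=\emptyset$: in that case the maximal element is $R$ itself, which the paper's convention excludes from being prime, so one should simply read the hypothesis as $A\neq\emptyset$ (as it is in every application in the paper).
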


An ideal $I\ideal R$ is pure if it satisfies one of the following equivalent conditions \cite[Prop.~7.2]{Borceux83}:
\begin{enumerate}
\item $(\forall x\in I)(\exists y\in I)(x=xy)$
\item $(\forall J\ideal R)(IJ=I\cap J)$
\item $(\forall x\in I)(I + \Ann(x)= R)$.
\end{enumerate}
We denote by $m(I)$ the pure part of $I\ideal R$, i.e., the largest pure ideal contained in $I$. By definition, $I$ is pure iff $I=m(I)$. For $I$, $J$ $\ideal R$, $m(I\cap J)=m(I)\cap m(J)$ \cite[Prop.~7.9]{Borceux83}. If $R$ is a Gelfand ring, then \cite[\S 8.2--3]{Borceux83}
\[m(I)=\{x\in R: (\exists y\in I)(x=xy)\},\]
\[(\forall I,J\ideal R)(I+J=R\implies m(I) + J = R)\]
and
\[(\forall I_\lambda\ideal R, \lambda\in\Lambda)\Big(\sum_{\lambda\in\Lambda} m(I_\lambda)=m\Big(\sum_{\lambda\in\Lambda} I_\lambda\Big)\,\Big).\]
It is not hard to see that then
\[
m(I)=\bigcup_{x\in I}\Ann(1-x)=\{x\in R: I+\Ann(x) = R\}.
\]
If $R$ is an exchange ring, the pure ideals are exactly the ideals generated by idempotents \cite[Thm.~1.7]{McGovern2006}.
It is not hard to see that then
\[
m(I)=\{x\in R: (\exists e = e^2 \in I)(x=xe)\} =\lspan{e\in I : e^2 = e}.
\]
In particular, for $I\ideal\GenK$, $m(I)=\lspan{e_S: e_S \in I}$.

Let $R$ be an $l$-ring. An ideal $I\ideal R$ is an $l$-ideal (or absolutely order convex) iff for each $x\in I$ and $y\in R$, $\abs{y}\le\abs{x}$ implies that $y\in I$. 
Every ideal in $\GenR$ is an $l$-ideal \cite{AJOS2006}.

\section{Correspondence between ideals in $\GenR$ and $\GenC$}
In order to transfer some results about ideals of $\GenR$ (obtained e.g., by the $l$-ring structure) to $\GenC$, we use the bijective correspondence in \cite{AJOS2006}, which we can put in a more general context.

\begin{df}
Let $A$ be a commutative, faithful algebra with $1$ over an $l$-ring $R$ (hence $R$ can be identified with a subring of $A$). We call $A$ an $R$-normed algebra if there exists a map $\norm{.}$: $A\to R$ with the following properties for each $a,b\in A$ and $r\in R$:
\begin{enumerate}
\item $\norm{a}\ge 0$ and $(\norm{a}=0\iff a=0)$
\item $\norm{ab} \le \norm{a}\norm{b}$
\item $\norm{a+b}\le\norm{a}+\norm{b}$
\item $\norm{r}=\abs{r}$.
\end{enumerate}
We call an ideal $I\ideal A$ norm convex iff for each $a\in I$ and $b\in A$, $\norm{b}\le\norm{a}$ implies that $b\in I$.
\end{df}

\begin{prop}
Let $R$ be an $l$-ring and let $A$ be a (commutative, faithful) $R$-normed $R$-algebra (with $1$). Then the maps $I\mapsto I\cap R$ (for $I$ a norm convex ideal of $A$) and $J\mapsto \lspan J=\{x\in A: \norm{x}\in J\}$ (for $J$ an $l$-ideal of $R$)
define a lattice isomorphism between the lattice of $l$-ideals of $R$ and the lattice of norm convex ideals of $A$.
\end{prop}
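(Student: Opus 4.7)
The plan is to verify the four things needed for a lattice isomorphism: well-definedness of the two maps (they land in the claimed lattices), the two compositions are identities, and the maps preserve the lattice structure.

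First I would check that the maps are well-defined. For a norm convex ideal $I\idealproper A$, the set $I\cap R$ is clearly an ideal of $R$, and it is an $l$-ideal: if $x\in I\cap R$ and $y\in R$ satisfies $\abs{y}\le\abs{x}$, then using property (4) of the $R$-norm, $\norm{y}=\abs{y}\le\abs{x}=\norm{x}$, so norm convexity of $I$ gives $y\in I$. For an $l$-ideal $J\ideal R$, checking that $\lspan{J}=\{x\in A:\norm{x}\in J\}$ is a norm convex ideal uses the norm inequalities: for $x,y\in\lspan{J}$ we have $0\le\norm{x+y}\le\norm{x}+\norm{y}\in J$, so the $l$-ideal property of $J$ forces $\norm{x+y}\in J$; similarly for $a\in A$ and $x\in\lspan{J}$, $\norm{ax}\le\norm{a}\norm{x}\in J$ gives $ax\in\lspan{J}$. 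Norm convexity is immediate from the definition.

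The key step — and the one I expect to do all the real work — is showing the compositions are identities, for which I would use the identity $\norm{\norm{x}}=\abs{\norm{x}}=\norm{x}$ (property (4) applied to $\norm{x}\in R$, which is nonnegative). For $\lspan{I\cap R}=I$: if $x\in I$, then viewing $\norm{x}\in R\subseteq A$, norm convexity applied to $x\in I$ with $\norm{\norm{x}}=\norm{x}$ yields $\norm{x}\in I$, hence $\norm{x}\in I\cap R$ and so $x\in\lspan{I\cap R}$; conversely, if $x\in\lspan{I\cap R}$ then $\norm{x}\in I$, and norm convexity applied with $\norm{x}$ (in $I$) compared against $x$ via $\norm{x}\le\norm{\norm{x}}$ gives $x\in I$. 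For $\lspan{J}\cap R=J$: if $y\in J$ then $\abs{y}\in J$ by the $l$-ideal property applied to $y$ itself (with the element $\abs{y}$ satisfying $\abs{\abs{y}}\le\abs{y}$), and since $\norm{y}=\abs{y}\in J$ we get $y\in\lspan{J}\cap R$; conversely, if $y\in\lspan{J}\cap R$ then $\abs{y}=\norm{y}\in J$ and the $l$-ideal property yields $y\in J$.

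Finally, both maps $I\mapsto I\cap R$ and $J\mapsto\lspan{J}$ are manifestly inclusion-preserving, and since they are mutually inverse they are order isomorphisms between the two posets. Because both posets are lattices in which meets and joins are determined by the order (meet $=$ intersection, join $=$ sum of ideals), any order isomorphism is automatically a lattice isomorphism, which completes the proof. The only genuine obstacle is the step $x\in I\Rightarrow\norm{x}\in I$, but once the identity $\norm{\norm{x}}=\norm{x}$ is observed this is a direct invocation of norm convexity.
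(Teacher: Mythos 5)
Your proposal is correct and follows essentially the same route as the paper's proof: verify well-definedness, show the two compositions are the identities via $\lspan{I\cap R}=\{x\in A:\norm{x}\in I\}=I$ and $\lspan{J}\cap R=\{x\in R:\abs{x}\in J\}=J$, and conclude from order preservation. The key observation $\norm{\norm{x}}=\norm{x}$, which gives $x\in I\Rightarrow\norm{x}\in I$ by norm convexity, is exactly what the paper's terse ``it is easy to see'' relies on.
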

\begin{proof}
It is easy to see that $I\cap R$ is an $l$-ideal of $R$ if $I$ is a norm convex ideal of $A$, and that $\{x\in A:\norm{x}\in J\}$ is a norm convex ideal of $A$ if $J$ is an $l$-ideal of $R$. Moreover, $\lspan J$ is the smallest norm convex ideal of $A$ that contains $J$. Further, $\lspan{I\cap R}= \{x\in A: \norm x\in I\}= I$ for each norm convex ideal $I$ of $A$ and $\lspan{J}\cap R=\{x\in R: \abs{x}\in J\}= J$ for each $l$-ideal $J$ of $R$, so the correspondence is bijective. As both operations clearly preserve the order $\subseteq$ and the $l$-ideals of an $l$-ring form a lattice, the correspondence defines a lattice isomorphism.
\end{proof}
In particular, $\GenC$ is a $\GenR$-normed $\GenR$-algebra (with the usual absolute value on $\GenC$ as norm), and every ideal in $\GenC$ is norm convex \cite{AJOS2006}. By the previous proposition, we obtain a lattice isomorphism between the lattice of ideals of $\GenR$ and the lattice of ideals of $\GenC$. Hence arbitrary sums and intersections are preserved. One easily checks that this isomorphism also preserves products, principal ideals, prime  and pseudoprime ideals (because of $\abs{ab}=\abs{a}\abs{b}$). It follows that all operations on ideals that can be defined in terms of those operations are preserved, e.g., maximal, idempotent and pure ideals, radicals, annihilators.

\section{Prime, pseudoprime and radical ideals in $\GenK$}
\begin{lemma}\label{lemma_principal_ideals}
Let $a$, $b\in\GenK$.
\begin{enumerate}
\item $a\GenK + b\GenK=(\abs{a}\vee \abs{b})\GenK=(\abs{a} + \abs{b})\GenK$.\\In particular, $\GenK$ is a Bezout ring, i.e., every finitely generated ideal in $\GenK$ is a principal ideal.
\item $a\GenK \cap b\GenK=(\abs{a}\wedge \abs{b})\GenK$.
\end{enumerate}
\end{lemma}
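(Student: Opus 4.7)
The plan is to reduce to non-negative generators via the identity $c\GenK = \abs{c}\GenK$, and then to read off both statements from elementary lattice inequalities combined with the fact that every principal ideal in $\GenK$ is an $l$-ideal (in $\GenR$) respectively norm convex (in $\GenC$). The reduction is immediate from this convexity: $c\in c\GenK$ together with $\abs{\abs{c}}=\abs{c}$ forces $\abs{c}\in c\GenK$, and symmetrically $c\in\abs{c}\GenK$. So in proving (1) and (2) I may replace $a,b$ by $\abs{a},\abs{b}$ throughout.

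For (1), I would exploit the chain $\abs{a}\vee\abs{b}\le\abs{a}+\abs{b}\le 2(\abs{a}\vee\abs{b})$. Each half of this chain, via the $l$-ideal/norm-convex property, places the larger expression in the principal ideal generated by the smaller, yielding $(\abs{a}\vee\abs{b})\GenK=(\abs{a}+\abs{b})\GenK$. Likewise $\abs{a},\abs{b}\le\abs{a}\vee\abs{b}$ gives $\abs{a}\GenK+\abs{b}\GenK\subseteq(\abs{a}\vee\abs{b})\GenK$, while $\abs{a}\vee\abs{b}\le\abs{a}+\abs{b}\in\abs{a}\GenK+\abs{b}\GenK$ delivers the reverse inclusion. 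The Bezout property then follows by an obvious induction on the number of generators.

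For (2), the inclusion $(\abs{a}\wedge\abs{b})\GenK\subseteq a\GenK\cap b\GenK$ is immediate from $\abs{a}\wedge\abs{b}\le\abs{a},\abs{b}$. The hard part will be the reverse inclusion, since one cannot meaningfully ``divide $x$ by the smaller of $a,b$''---$\GenK$ has many zero divisors. Instead, I would split using the comparison idempotent already exploited in Lemma \ref{lemma_zero_divisors}: given $x=\alpha a=\beta b$, fix representatives and set $S=\{\eps\in(0,1):\abs{a_\eps}\le\abs{b_\eps}\}$. Then $\abs{ae_S}=\abs{a}\,e_S=(\abs{a}\wedge\abs{b})e_S\le\abs{a}\wedge\abs{b}$, so the $l$-ideal/norm-convex property places $ae_S\in(\abs{a}\wedge\abs{b})\GenK$, and hence $xe_S=\alpha(ae_S)\in(\abs{a}\wedge\abs{b})\GenK$; the symmetric argument on $\co S$ gives $xe_{\co S}=\beta(be_{\co S})\in(\abs{a}\wedge\abs{b})\GenK$. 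Adding the two pieces yields $x\in(\abs{a}\wedge\abs{b})\GenK$, completing the proof. The main obstacle is precisely this splitting step; once it is in place, everything else is lattice bookkeeping on $l$-ideals.
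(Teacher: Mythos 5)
Your proof is correct, but it takes a genuinely different route from the paper. The paper disposes of both parts in two lines by quoting general results on $l$-rings and $f$-rings in which every ideal is an $l$-ideal (\cite[Prop.~8.2.8 and 9.1.8]{BKW}), applied to $\GenR$, and then transports the statements to $\GenC$ via the lattice isomorphism between the ideals of $\GenR$ and those of $\GenC$ established in the preceding section. You instead give a self-contained argument: the reduction $c\GenK=\abs{c}\GenK$ and part (1) via the two-sided estimate $\abs{a}\vee\abs{b}\le\abs{a}+\abs{b}\le 2(\abs{a}\vee\abs{b})$ are exactly the standard $l$-ring computations hidden inside the BKW citation (note that you do use, as you may, that the \emph{sum} $\abs{a}\GenK+\abs{b}\GenK$ is again an $l$-ideal, not just the principal ideals), and for part (2) your splitting by the comparison idempotent $e_S$ with $S=\{\eps:\abs{a_\eps}\le\abs{b_\eps}\}$, giving $\abs{a}e_S=(\abs{a}\wedge\abs{b})e_S$ and $\abs{b}e_{\co S}=(\abs{a}\wedge\abs{b})e_{\co S}$, is a concrete $\GenK$-specific substitute for the $f$-ring argument of \cite[Prop.~9.1.8]{BKW}. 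What the paper's approach buys is brevity and a uniform treatment of $\GenR$ and $\GenC$ through the ideal correspondence; what yours buys is independence from the external references and from that correspondence, since you handle $\GenC$ directly through norm convexity of its ideals. Both hinge on the same structural input, namely that every ideal of $\GenR$ is an $l$-ideal and every ideal of $\GenC$ is norm convex.
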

\begin{proof}
(1) The corresponding statement holds in any $l$-ring in which every ideal is an $l$-ideal \cite[Prop.~8.2.8]{BKW}, in particular in $\GenR$.\\
(2) The corresponding statement holds in any $f$-ring in which every ideal is an $l$-ideal \cite[Prop.~9.1.8]{BKW}, in particular in $\GenR$.\\
The bijective correspondence of ideals yields the result for $\GenC$.
\end{proof}

\begin{lemma}\label{lemma_ideal_calc}
Let $I\ideal\GenK$ and $m\in\N$.
\begin{enumerate}
\item $I^m=\{x\in\GenK: \sqrt[m]{\abs x}\in I\}$
\item Let $J\ideal\GenK$, $J^m\subseteq I^m$. Then $J\subseteq I$.\\
In particular, if $x\in\GenK$ and $x^m\in I^m$, then $x\in I$.
\item $\rad I=\lspan[\big]{\sqrt[n]{\abs{x}}: n\in\N, x\in I}$. In particular, for $x\in\GenK$, $\rad{x\GenK}=\lspan[\big]{\sqrt[n]{\abs x}: n\in\N}$.
\end{enumerate}
\end{lemma}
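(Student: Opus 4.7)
The plan is to exploit two pieces of structure: that every ideal in $\GenK$ is an $l$-ideal (norm convex in the complex case, via the correspondence set up earlier), together with the principal-ideal calculus from lemma \ref{lemma_principal_ideals}. For $y\ge 0$ in $\GenR$, the $m$-th root $\sqrt[m]{y}$ is well defined on representatives, and I will use this freely.

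For (1), for the inclusion $I^m\subseteq\{x:\sqrt[m]{\abs x}\in I\}$, I would write a generic $x\in I^m$ as $x=\sum_{i=1}^N a_{i,1}\cdots a_{i,m}$ with all $a_{i,j}\in I$ and estimate
\[
\abs x\le \sum_i \abs{a_{i,1}}\cdots\abs{a_{i,m}}\le N c^m,\qquad c:=\bigvee_{i,j}\abs{a_{i,j}}\in I,
\]
where $c\in I$ follows from lemma \ref{lemma_principal_ideals}(1) and the $l$-ideal property. Hence $\sqrt[m]{\abs x}\le N^{1/m}c\in I$, so $l$-ideality gives $\sqrt[m]{\abs x}\in I$. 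The converse is almost immediate: if $\sqrt[m]{\abs x}\in I$ then $\abs x=(\sqrt[m]{\abs x})^m\in I^m$, and since $I^m$ is itself an $l$-ideal, $x\in I^m$.

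Part (2) drops out of (1): for $y\in J$, we have $y^m\in J^m\subseteq I^m$, whence (1) yields $\abs y=\sqrt[m]{\abs{y^m}}\in I$ and therefore $y\in I$. Part (3) follows in the same spirit from the standard characterization $\rad I=\{y:y^n\in I\text{ for some }n\in\N\}$: if $y^n\in I$ then $\abs y=\sqrt[n]{\abs{y^n}}$ is itself a generator of the claimed span with $y^n\in I$, so $\abs y$ (and hence $y$ by $l$-ideality) lies in the span; conversely, for any $u\in I$ and $n\in\N$, $(\sqrt[n]{\abs u})^n=\abs u\in I$, so every generator lies in $\rad I$.

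The main obstacle is the ``in particular'' clause of (3): one has to show that for each $a\in\GenK$ and $n\in\N$, $\sqrt[n]{\abs{xa}}$ already lies in $\lspan{\sqrt[k]{\abs x}:k\in\N}$. Here the specific arithmetic of $\GenK$ enters through moderateness: any representative of $a$ is bounded by a power of $\eps^{-1}$, so $\abs a\le\alpha^{-N}$ for some $N\in\N$. Then
\[
\sqrt[n]{\abs{xa}}=\sqrt[n]{\abs a}\cdot\sqrt[n]{\abs x}\le\sqrt[n]{\alpha^{-N}}\cdot\sqrt[n]{\abs x}\le\alpha^{-N}\sqrt[n]{\abs x},
\]
using $0<\alpha<1$ to compare the fractional power $\sqrt[n]{\alpha^{-N}}$ with the integer power $\alpha^{-N}$. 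The bound sits in $\lspan{\sqrt[k]{\abs x}:k\in\N}$, so $l$-ideality closes the argument. This moderateness estimate is the only step where anything beyond the $f$-ring machinery is needed.
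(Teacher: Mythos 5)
Your proof is correct, and its overall skeleton matches the paper's: part (1) rests on describing $I^m$ via the bound $\abs{x}\le(\text{something in }I)^m$ together with the $l$-ideal (norm convexity) property, and parts (2) and (3) then follow formally. The difference is in how (1) is established: the paper simply cites the general fact that in an $l$-ring where every ideal is an $l$-ideal one has $I^m=\{a:(\exists x\in I)(\abs{a}\le\abs{x}^m)\}$ \cite[Prop.~8.2.11]{BKW}, whereas you reprove the needed inclusion from scratch by writing $x=\sum_{i=1}^N a_{i,1}\cdots a_{i,m}$, dominating $\abs{x}$ by $Nc^m$ with $c=\bigvee_{i,j}\abs{a_{i,j}}\in I$ (via lemma \ref{lemma_principal_ideals}(1)), and taking $m$-th roots. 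This makes the argument self-contained at the cost of redoing a small piece of \cite{BKW}. You also do something the paper leaves implicit: the ``in particular'' clause of (3) genuinely requires knowing that $\sqrt[n]{\abs{xa}}$ lies in $\lspan[\big]{\sqrt[k]{\abs{x}}:k\in\N}$ for arbitrary $a\in\GenK$, and your moderateness estimate $\sqrt[n]{\abs{a}}\le\alpha^{-N/n}\le\alpha^{-N}$ supplies exactly the missing reduction (equivalently, one can note that $\sqrt[n]{\abs{a}}$ is itself a well-defined element of $\GenR$, so $\sqrt[n]{\abs{xa}}=\sqrt[n]{\abs{x}}\,\sqrt[n]{\abs{a}}$ already lies in the principal ideal $\sqrt[n]{\abs{x}}\,\GenK$). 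So your write-up is, if anything, slightly more complete than the paper's.
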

\begin{proof}
(1) As $\GenR$ is an $l$-ring in which every ideal is an $l$-ideal, $I^m=\{a\in \GenR: (\exists x\in I)(\abs{a}\le\abs{x}^n)\}$ \cite[Prop.~8.2.11]{BKW}. By the bijective correspondence of ideals, this also holds in $\GenC$. Taking $n$-th roots and using the fact that $I$ is an $l$-ideal, the result follows.\\
(2) If $x\in J$, then $x^m\in J^m\subseteq I^m$, so by part~1, $\abs{x}=\sqrt[m]{\abs{x^m}}\in I$, hence $x\in I$ as $I$ is an $l$-ideal.\\
(3) $\subseteq$: if $a^n=x\in I$, for some $n\in\N$, then $a\in \abs{a}\GenK=\sqrt[n]{\abs{x}}\GenK$.\\
$\supseteq$: if $x\in I$, then $(\sqrt[n]{\abs{x}})^n\in I$.
\end{proof}


\begin{prop}\label{prop_radical}
The following are equivalent for an ideal $I\ideal\GenK$:
\begin{enumerate}
\item $I$ is idempotent
\item $I$ is radical
\item $(\forall x\in I)(\sqrt{\abs x}\in I)$
\item $I$ is an intersection of prime ideals.
\end{enumerate}
\end{prop}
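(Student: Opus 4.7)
The plan is to split the equivalence into two parts. First, $(2)\Leftrightarrow(4)$ is the standard Krull-style fact (valid in any commutative ring with $1$, and already recalled in the preliminaries through $\rad{I}=\bigcap_{I\subseteq P\text{ prime}}P$) that an ideal is radical iff it is an intersection of prime ideals. The remaining work is thus to establish the cycle $(1)\Rightarrow(2)\Rightarrow(3)\Rightarrow(1)$, and the main tool for this is Lemma~\ref{lemma_ideal_calc}(1), which identifies $I^m$ with $\{x\in\GenK:\sqrt[m]{\abs x}\in I\}$, combined with the fact (recalled in the preliminaries) that every ideal in $\GenK$ is an $l$-ideal, so $x\in I\iff\abs x\in I$.

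For $(1)\Rightarrow(2)$, suppose $I=I^2$ and $x^2\in I$. Applying Lemma~\ref{lemma_ideal_calc}(1) with $m=2$ to $x^2\in I^2$ gives $\sqrt{\abs{x^2}}=\abs x\in I$, hence $x\in I$ by the $l$-ideal property. For $(2)\Rightarrow(3)$, let $x\in I$; then $\abs x\in I$, and since $(\sqrt{\abs x})^2=\abs x\in I$, radicality yields $\sqrt{\abs x}\in I$. For $(3)\Rightarrow(1)$, let $x\in I$; by (3), $\sqrt{\abs x}\in I$, so $\abs x=\sqrt{\abs x}\cdot\sqrt{\abs x}\in I^2$, and using that $I^2$ is again an $l$-ideal we obtain $x\in I^2$. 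This proves $I\subseteq I^2$; the reverse inclusion is automatic, so $I$ is idempotent.

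The main obstacle is essentially absent, since Lemma~\ref{lemma_ideal_calc} absorbs the heavy lifting. The only points to check carefully are that $\sqrt{\abs x}$ is a genuine element of $\GenR$ (a moderate nonnegative representative has a moderate square root, and negligibility is preserved since $\eps^{N}\mapsto\eps^{N/2}$ remains of arbitrary polynomial decay), and that one may freely switch between $x$ and $\abs x$ because every ideal of $\GenK$ is an $l$-ideal. With these in hand the proof reduces to a few lines.
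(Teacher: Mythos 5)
Your proof is correct and follows essentially the same route as the paper: the same cycle $(1)\Rightarrow(2)\Rightarrow(3)\Rightarrow(1)$ driven by Lemma~\ref{lemma_ideal_calc} together with the $l$-ideal property, and the same dismissal of $(2)\Leftrightarrow(4)$ via $\rad I=\bigcap_{I\subseteq P}P$. The only cosmetic differences are that you use the ``$x^2\in I\implies x\in I$'' formulation of radical (which the paper declares equivalent in the preliminaries) where the paper handles general $x^n\in I=I^n$ via Lemma~\ref{lemma_ideal_calc}(2), and that in $(3)\Rightarrow(1)$ you compute $\abs{x}=(\sqrt{\abs x})^2\in I^2$ directly instead of quoting Lemma~\ref{lemma_ideal_calc}(1).
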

\begin{proof}
$1\implies 2$: let $x\in\rad{I}$. So $x^n\in I=I^n$, for some $n\in\N$. By lemma~\ref{lemma_ideal_calc}, $x\in I$.\\
$2\implies 3$: as $I$ is an $l$-ideal, if $x\in I$, $\abs x= (\sqrt{\abs x})^2\in I$.\\
$3\implies 1$: by lemma~\ref{lemma_ideal_calc}, $I\subseteq I^2$. The converse inclusion holds for any ideal.\\
$2\Leftrightarrow 4$: since $\rad I=\bigcap_{I\subseteq P\atop P \text{ prime}} P$.
\end{proof}

\begin{prop}\label{prop_radical_intersection}
\leavevmode
\begin{enumerate}
\item For a family $(I_\lambda)_{\lambda\in\Lambda}$ of ideals $I_\lambda$ $\ideal$ $\GenK$, $\rad{\sum_{\lambda\in\Lambda }I_\lambda}=\sum_{\lambda\in\Lambda}\rad{I_\lambda}$. In particular, the sum of a family of radical ideals is radical.
\item For $I,J\idealproper\GenK$, $\rad I\cap \rad J =\rad{I\cap J}$.
\item For each $I\ideal \GenK$, \[\radpart{I}:=\bigcap_{n\in\N}I^n=\{x\in\GenK: (\forall n\in\N)(\sqrt[n]{\abs x}\in I)\}=\{x\in\GenK: \rad{x\GenK}\subseteq I\}\]
is the largest radical ideal contained in $I$. In particular, $I$ is radical iff $I=\radpart{I}$.
\item For a family $(I_\lambda)_{\lambda\in\Lambda}$ of ideals $I_\lambda$ $\ideal$ $\GenK$, $\bigcap_{\lambda\in\Lambda} \radpart{I}_\lambda= \radpart{\big(\bigcap_{\lambda\in\Lambda} I_\lambda\big)}$. In particular, the intersection of a family of radical ideals is radical.
\item For $I\ideal\GenK$, $m(I)\subseteq\radpart I\subseteq I$. In particular, every pure ideal of $\GenK$ is radical.
\end{enumerate}
\end{prop}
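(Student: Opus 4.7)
The plan is to dispatch the five parts largely using Lemma~\ref{lemma_ideal_calc} (which describes $I^n$ and $\rad{I}$ explicitly) together with the fact that every ideal in $\GenK$ is an $l$-ideal.

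\textbf{Part 1.} The inclusion $\sum_\lambda \rad{I_\lambda} \subseteq \rad{\sum_\lambda I_\lambda}$ is immediate. For the reverse, let $x\in\rad{\sum_\lambda I_\lambda}$; by Lemma~\ref{lemma_ideal_calc}(3) it suffices to show that $\sqrt[n]{\abs y}\in \sum_\lambda \rad{I_\lambda}$ whenever $y=x_1+\cdots+x_k$ with each $x_i\in I_{\lambda_i}$ and $n\in\N$. The elementary inequality $\sqrt[n]{a_1+\cdots+a_k}\le\sqrt[n]{a_1}+\cdots+\sqrt[n]{a_k}$ for non-negative reals (seen by expanding the right-hand side) yields $\sqrt[n]{\abs y}\le\sqrt[n]{\abs{x_1}}+\cdots+\sqrt[n]{\abs{x_k}}$, and each $\sqrt[n]{\abs{x_i}}$ lies in $\rad{I_{\lambda_i}}$. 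Since ideals in $\GenK$ are $l$-ideals, $\sqrt[n]{\abs y}\in\sum_\lambda\rad{I_\lambda}$.

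\textbf{Part 2.} The inclusion $\rad{I\cap J}\subseteq\rad I\cap\rad J$ is trivial. Conversely, if $x^n\in I$ and $x^m\in J$, then since $I$ and $J$ are ideals, $x^k\in I\cap J$ for $k=\max(n,m)$, so $x\in\rad{I\cap J}$.

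\textbf{Part 3.} The first equality $\bigcap_n I^n=\{x:\sqrt[n]{\abs x}\in I\text{ for all }n\}$ is Lemma~\ref{lemma_ideal_calc}(1), and the second equality follows from Lemma~\ref{lemma_ideal_calc}(3) describing $\rad{x\GenK}=\lspan{\sqrt[n]{\abs x}:n\in\N}$. Clearly $\radpart I\subseteq I$ (take $n=1$) and $\radpart I$ is an ideal as an intersection of ideals. To see $\radpart I$ is radical, suppose $x^2\in\radpart I$; then $\rad{x^2\GenK}\subseteq I$, and since $\rad{x^2\GenK}=\rad{x\GenK}$ (as $x\in\rad{x^2\GenK}$), we get $x\in\radpart I$. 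For maximality, any radical ideal $J\subseteq I$ satisfies $J=J^n\subseteq I^n$ for every $n$ (by Proposition~\ref{prop_radical} applied iteratively), hence $J\subseteq\radpart I$.

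\textbf{Part 4.} Using the third characterization from Part 3, $x\in\bigcap_\lambda\radpart{I_\lambda}$ iff $\rad{x\GenK}\subseteq I_\lambda$ for all $\lambda$, iff $\rad{x\GenK}\subseteq\bigcap_\lambda I_\lambda$, iff $x\in\radpart{(\bigcap_\lambda I_\lambda)}$. The ``in particular'' assertion is then immediate.

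\textbf{Part 5.} Each idempotent $e=e^2$ satisfies $e=e^n\in I^n$ for every $n$, so every idempotent in $I$ lies in $\radpart I$. Since $m(I)$ is generated by the idempotents of $I$ (by the characterization of pure part in an exchange ring recalled in Section 2), $m(I)\subseteq\radpart I$. If $I$ itself is pure, then $I=m(I)\subseteq\radpart I\subseteq I$, so $I=\radpart I$ is radical.

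I expect Part 1 to be the main subtlety, since it is the only part that requires an analytic estimate rather than a formal manipulation; however, the inequality $\sqrt[n]{a+b}\le\sqrt[n]a+\sqrt[n]b$ combined with the $l$-ideal property makes it routine. The other parts are essentially bookkeeping given the descriptions of $I^n$ and $\rad I$ from Lemma~\ref{lemma_ideal_calc}.
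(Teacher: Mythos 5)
Your proof is correct, and in the one part where there is real content --- part~1 --- you take a genuinely different route from the paper. The paper proves $\rad{\sum I_\lambda}\subseteq\sum\rad{I_\lambda}$ purely ring-theoretically: by Proposition~\ref{prop_radical} each $\rad{I_\lambda}$ is idempotent, a sum of idempotent ideals is idempotent in any commutative ring, hence $\sum\rad{I_\lambda}$ is again radical and therefore contains $\rad{\sum I_\lambda}$. You instead work with the generators $\sqrt[n]{\abs{y}}$ of the radical from Lemma~\ref{lemma_ideal_calc}(3) and use the pointwise subadditivity $\sqrt[n]{a_1+\cdots+a_k}\le\sqrt[n]{a_1}+\cdots+\sqrt[n]{a_k}$ together with absolute order convexity of ideals; this is more hands-on but equally valid, and it makes visible the lattice-ordered structure that underlies the algebraic shortcut. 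Your maximality argument in part~3 ($J=J^n\subseteq I^n$ for radical $J\subseteq I$, again via the radical--idempotent equivalence) also differs slightly from the paper's (which checks $\rad{x\GenK}\subseteq\rad{J}=J\subseteq I$ for each $x\in J$), but both are one-line consequences of Proposition~\ref{prop_radical}. Parts~2, 4 and~5 match the paper's arguments essentially verbatim; in part~5 you use the description $m(I)=\lspan{e\in I:e^2=e}$ where the paper uses $x=xe_S$ directly, which is the same fact.
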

\begin{proof}
(1) In any commutative ring, the sum of a family of idempotent ideals is idempotent. So by proposition \ref{prop_radical}, $\sum_{\lambda\in\Lambda}\rad{I_\lambda}$ is idempotent, hence $\sum_{\lambda\in\Lambda}\rad{I_\lambda}= \rad{\sum_{\lambda\in\Lambda} \rad{I_\lambda}} \supseteq\rad{\sum_{\lambda\in\Lambda} I_\lambda}$. The converse inclusion also holds.\\
(2) Elementary.\\
(3) The equalities follow by lemma \ref{lemma_ideal_calc}.
Let $x\in \rad{\radpart I}$. Then $\rad{x^n\GenK}\subseteq I$, for some $n\in\N$. As $\rad{J^n}=\rad{J}$, $\forall J\ideal \GenK$, $x\in \radpart I$. So $\radpart I$ is a radical ideal contained in $I$. Now let $J$ be a radical ideal contained in $I$. Then for each $x\in J$, $\rad{x\GenK}\subseteq \rad{J}=J\subseteq I$. Hence $J\subseteq \radpart I$.\\
(4) $x\in \bigcap_{\lambda\in\Lambda} \radpart{I}_\lambda$ iff $(\forall\lambda\in\Lambda)(\rad{x\GenK}\subseteq I_\lambda)$ iff $x\in \radpart{\big(\bigcap_{\lambda\in\Lambda} I_\lambda\big)}$.\\
(5) If $x\in m(I)$, then $x=xe_S$, for some $e_S\in I$, and $x=x e_S^n\in I^n$, $\forall n\in\N$. So $x\in \radpart I$.
\end{proof}
\begin{rem}
For a family $(I_\lambda)_{\lambda\in\Lambda}$ of ideals $I_\lambda$ $\ideal$ $\GenK$, $\rad{\bigcap I_\lambda}\subseteq \bigcap \rad{I_\lambda}$. Equality does not hold in general, as can be seen by the example $\radpart I =\rad{\radpart I}=\rad{\bigcap_{n\in\N}I^n}\subseteq \bigcap_{n\in\N}\rad{I^n}=\rad I$. Similarly, $\sum \radpart I_\lambda\subseteq \radpart{(\sum I_\lambda)}$. But it is easy to see that for each $n\in\N$, $\radpart{\lspan[\big]{\sqrt[n]{\abs x}: x\in I}}=\radpart I$, so equality does not hold in general: $\radpart I =\sum_{n\in\N} \radpart{\lspan[\big]{\sqrt[n]{\abs x}: x\in I}}\subseteq \radpart{\big(\sum_{n\in\N} \lspan[\big]{\sqrt[n]{\abs x}: x\in I}\big)}=\radpart{(\rad I)}=\rad I$.
\end{rem}

\begin{thm}\label{thm_pseudoprime}
The following are equivalent for an ideal $I\idealproper\GenK$:
\begin{enumerate}
\item $I$ is pseudoprime
\item the set of ideals containing $I$ is totally ordered (for $\subseteq$)
\item $I$ is irreducible
\item $(\forall S\subseteq(0,1))(e_S\in I$ or $e_{\co S}\in I)$
\item there exists ${\mathcal F}\in P_*({\mathcal S})$ such that $m(I)=g(\mathcal F)$
\item $\rad I$ is prime.
\end{enumerate}
Moreover, for $\GenK=\GenR$, these conditions are equivalent with
\begin{itemize}
\item[7.] $\GenR/I$ is totally ordered.
\end{itemize}
\end{thm}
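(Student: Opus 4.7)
The plan is to run a cycle through the conditions that are truly structural ($1 \Leftrightarrow 4 \Leftrightarrow 2 \Leftrightarrow 3 \Leftrightarrow 1$) and then branch off to conditions 5, 6, 7. For $1 \Rightarrow 4$: since $e_S e_{\co S} = 0$, pseudoprimality of $I$ immediately yields $e_S \in I$ or $e_{\co S} \in I$. For $4 \Rightarrow 1$: if $ab=0$, Lemma~\ref{lemma_zero_divisors} produces $S$ with $ae_S=0$ and $be_{\co S}=0$, so $a=ae_{\co S}$ and $b=be_S$, and condition 4 delivers one of $a,b$ in $I$. For $4 \Rightarrow 2$: assume two ideals $J,K \supseteq I$ were incomparable, pick $x \in J\setminus K$ and $y \in K\setminus J$, and set $S = \{\eps : \abs{x_\eps} \ge \abs{y_\eps}\}$. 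Using that every ideal is an $l$-ideal (with $\abs{y}e_S \le \abs{x}e_S$ and $\abs{x}e_{\co S} \le \abs{y}e_{\co S}$) we get $ye_S \in J$ and $xe_{\co S} \in K$. Condition 4 puts $e_S$ or $e_{\co S}$ into $I \subseteq J\cap K$; in either case the missing half of $y = ye_S + ye_{\co S}$ or of $x = xe_S + xe_{\co S}$ also lands in the forbidden ideal, contradicting the choice. The step $2 \Rightarrow 3$ is immediate, and for $3 \Rightarrow 1$ a short calculation (multiply one representation by $e_S$, the other by $e_{\co S}$) shows $(I + e_S\GenK) \cap (I + e_{\co S}\GenK) = I$ for the $S$ from Lemma~\ref{lemma_zero_divisors}, so irreducibility hands us $e_S \in I$ or $e_{\co S} \in I$ and hence $b = be_S \in I$ or $a = ae_{\co S} \in I$.

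For $4 \Leftrightarrow 5$, set $\mathcal F = \{S \in \mathcal S : e_S \in I\}$. Closure under finite union reduces to the observation that if $S,T \in \mathcal S$ with $e_S, e_T \in I$, then $e_{S\cup T} \in I$, so $e_{S\cup T} \ne 1$ (as $I$ is proper) and $e_{S\cup T} \ne 0$ (since $0 \in \overline S$), which is exactly $S \cup T \in \mathcal S$. The identity $m(I) = \lspan{e \in I : e^2 = e} = g(\mathcal F)$ from the preliminaries, together with the elementary fact that nontrivial idempotents correspond precisely to $S \in \mathcal S$, gives both directions.

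The connection with 6 runs through 2. Given $ab \in \rad I$, the ideals $I + a\GenK$ and $I + b\GenK$ are comparable; WLOG $a = i + bc$ with $i \in I$, so $a^2 = ai + abc \in \rad I$ and hence $a \in \rad I$. For $6 \Rightarrow 1$, from $ab = 0$ use Lemma~\ref{lemma_zero_divisors} to obtain $e_S e_{\co S} = 0 \in \rad I$; since $e_S, e_{\co S}$ are idempotents, $e \in \rad I \iff e \in I$, so primality of $\rad I$ gives 4.

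Finally, for the extra equivalence with 7 in $\GenR$: if $\GenR/I$ is totally ordered and $ab=0$, then Lemma~\ref{lemma_zero_divisors} gives $\abs{a} \wedge \abs{b} = 0$; in a totally ordered ring $\overline{\abs a} \wedge \overline{\abs b} \in \{\overline{\abs a}, \overline{\abs b}\}$, forcing $\abs a \in I$ or $\abs b \in I$. Conversely, given $x,y \in \GenR$, decompose via $u = (x-y) \vee 0$ and $v = (y-x) \vee 0$: then $u \wedge v = 0$ in the reduced $f$-ring $\GenR$, so $uv = 0$ by Lemma~\ref{lemma_zero_divisors}, and pseudoprimality yields $u \in I$ (giving $\bar x \le \bar y$) or $v \in I$ (giving $\bar y \le \bar x$). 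The main obstacle is the $l$-ideal bookkeeping in $4 \Rightarrow 2$, where one must juggle the behaviour on $S$ and $\co S$ simultaneously; every other step essentially boils down to picking the right idempotent from Lemma~\ref{lemma_zero_divisors} and invoking the pure-part formula.
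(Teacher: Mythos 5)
Your proof is correct, and in several places it is genuinely more self-contained than the paper's. The paper handles $(1)\Rightarrow(2)$ by first proving $(1)\Rightarrow(7)$ via a citation to Gillman--Kohls for $l$-rings, then $(7)\Rightarrow(2)$ via the lattice of $l$-ideals of $\GenR/I$, and finally transports the result to $\GenC$ through the $\GenR$--$\GenC$ ideal correspondence; your direct argument for $(4)\Rightarrow(2)$ (split $x\in J\setminus K$ and $y\in K\setminus J$ along $S=\{\eps:\abs{x_\eps}\ge\abs{y_\eps}\}$ and use order convexity) works uniformly, though for $\GenC$ you should invoke norm convexity of ideals rather than the $l$-ideal property verbatim. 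Likewise, your computation $(I+e_S\GenK)\cap(I+e_{\co S}\GenK)=I$ replaces the paper's appeal to Bigard--Keimel--Wolfenstein for $(3)\Rightarrow(4)$, your chain argument ($a=i+bc$, hence $a^2\in\rad I$) replaces the paper's ``intersection of a chain of primes is prime'' for $(2)\Rightarrow(6)$, and your direct treatment of $(1)\Leftrightarrow(7)$ via $u=(x-y)\vee 0$, $v=(y-x)\vee 0$ replaces the citation to \cite{GK60}. Two cosmetic points: in $(6)\Rightarrow(1)$ the identity $e_Se_{\co S}=0$ needs no lemma, and one should note that $\rad I$ is proper (immediate, since $1\notin\rad I$ for proper $I$) before calling it prime. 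Everything else, including the $(4)\Leftrightarrow(5)$ bookkeeping with $\mathcal F=\{S\in\mathcal S:e_S\in I\}$, matches the paper's argument.
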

\begin{proof}
$(1)\implies (7)$: the corresponding statement holds for any $l$-ideal in any commutative $l$-ring $R$ in which $a^2=\abs{a}^2$, $\forall a\in R$ \cite[3.5]{GK60}, hence also for any ideal in $\GenR$.\\
$(7)\implies (2)$: (cf.\ \cite{GK60}) the map $J\mapsto J/I$ is an order preserving bijection between the ($l$-)ideals of $\GenR$ containing $I$ and the $l$-ideals of $\GenR/I$. As in any totally ordered ring, the $l$-ideals in $\GenR/I$ are totally ordered.\\
$(1)\implies (2)$: the bijective correspondence of ideals yields the result for $\GenC$.\\
$(2)\implies (3)$: a fortiori.\\
$(3)\implies (4)$: as in any commutative $l$-ring with $1$ in which every ideal is an $l$-ideal, the irreducibility of $I$ is equivalent with: for any $a,b\in\GenR$, $a\GenR\cap b\GenR\subseteq I$ implies $a\in I$ or $b\in I$ \cite[Prop.~8.4.1]{BKW}. In particular, $e_S \GenR\cap e_{\co S}\GenR=\{0\}\subseteq I$, so $e_S\in I$ or $e_{\co S}\in I$. The bijective correspondence of ideals yields the result for $\GenC$ (since any $e_S\in\GenR$, property (4) is preserved by the correspondence).\\
$(4)\implies (1)$: let $a,b\in\GenK$ with $ab=0$. By lemma \ref{lemma_zero_divisors}, there exists $S\subseteq(0,1)$ such that $ae_S=0$ and $b e_{\co S}=0$. Either $e_S\in I$, hence $b=b e_S\in I$, or $e_{\co S}\in I$, hence $a=a e_{\co S}\in I$.\\
$(4)\implies (5)$: let $\mathcal F=\{S\in\mathcal S: e_S\in I\}$. As $e_S$, $e_T\in I$ imply that $e_{S\cup T}=e_S + e_T - e_S e_T\in I$, $\mathcal F\in P_*(\mathcal S)$. As $I\ne\GenK$, $g(\mathcal F)=m(I)$.\\
$(5)\implies (4)$: Let $S\subseteq (0,1)$. If $e_S=0$ or $e_S=1$, then (4) is trivially fulfilled, so we may suppose $S\in\mathcal S$. Hence either $e_S$ or $e_{\co S}$ belong to $g(\mathcal F)=m(I)\subseteq I$.\\
$(2)\implies (6)$: the intersection of a chain of prime ideals is prime, hence $\rad{I}=\bigcap_{I\subseteq P\atop P \text{ prime}} P$ is prime.\\
$(6)\implies (4)$: for any $S\subseteq (0,1)$, $e_S e_{\co S}=0\in \rad{I}$, so $e_S\in\rad{I}$ or $e_{\co S}\in \rad{I}$. Since $e_S$, $e_{\co S}$ are idempotent elements, $e_S\in I$ or $e_{\co S}\in I$.\\
(Alternatively, $(1)\Leftrightarrow (6)$ holds for any $l$-ideal in any commutative reduced normal $f$-ring with $1$ \cite[Thm.~2.6]{Larson88}.)
\end{proof}

\begin{thm}\label{thm_prime_characterization}
Let $I\idealproper\GenK$.\\
Then $I$ is prime iff $I$ is pseudoprime and radical. Or, equivalently, iff
\begin{equation}
(\forall S\subseteq(0,1))(e_S\in I \text{ or\/ } e_{\co S}\in I)\label{S condition}
\end{equation}
and
\begin{equation*}
(\forall x\in I)(\sqrt{\abs x}\in I).
\end{equation*}
The set of (proper) prime ideals of $\GenK$ equals
\[\{\rad{I}: I\idealproper\GenK, I\supseteq g({\mathcal F}), \text{ for some }\mathcal F\in P_*(\mathcal S)\}.\]
\end{thm}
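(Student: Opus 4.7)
The proof is essentially a direct assembly of Theorem~\ref{thm_pseudoprime} and Proposition~\ref{prop_radical}, so my plan is to carry out exactly that synthesis and then read off the description of the set of primes.

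First, I would establish the main equivalence: $I$ is prime iff $I$ is pseudoprime and radical. The forward direction is immediate from the definition: if $ab\in I$ prime, then in particular $ab=0\in I$ gives pseudoprimality, and $x^2\in I$ gives $x\in I$ which is radicality. For the converse, I invoke the equivalence $(1)\Leftrightarrow (6)$ of Theorem~\ref{thm_pseudoprime}: pseudoprimality of $I$ means $\rad{I}$ is prime; combining this with $I = \rad{I}$ (radicality) immediately yields that $I$ itself is prime.

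Next, the equivalent reformulation: the condition $(\forall S\subseteq(0,1))(e_S\in I$ or $e_{\co S}\in I)$ is exactly condition~(4) of Theorem~\ref{thm_pseudoprime}, hence equivalent to pseudoprimality; and the condition $(\forall x\in I)(\sqrt{\abs x}\in I)$ is exactly condition~(3) of Proposition~\ref{prop_radical}, hence equivalent to radicality. So the conjunction of these two conditions characterizes prime ideals.

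Finally, for the set description, I would prove both inclusions. For $\supseteq$: take $I\idealproper\GenK$ with $I\supseteq g(\mathcal F)$ for some $\mathcal F\in P_*(\mathcal S)$. For any $S\subseteq(0,1)$, either $S\in\mathcal S$, in which case $S$ or $\co S$ lies in $\mathcal F$ and the corresponding $e_S$ or $e_{\co S}$ belongs to $g(\mathcal F)\subseteq I$, or $e_S\in\{0,1\}$, in which case $e_S$ or $e_{\co S}$ is $0\in I$ trivially. So condition~(4) of Theorem~\ref{thm_pseudoprime} holds, hence $I$ is pseudoprime, hence $\rad{I}$ is prime by condition~(6) of that theorem. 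It remains proper since $1\notin I$. For $\subseteq$: given a proper prime $P$, take $I=P$ and $\mathcal F=\{S\in\mathcal S: e_S\in P\}$; this is the set built in the proof $(4)\implies (5)$ of Theorem~\ref{thm_pseudoprime}, so $\mathcal F\in P_*(\mathcal S)$, and clearly $g(\mathcal F)\subseteq P$ while $P=\rad{P}$.

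There is no real obstacle here; everything follows mechanically from the two previously proved results, and the only care needed is the trivial case $S\notin\mathcal S$ in the $\supseteq$ inclusion of the set description.
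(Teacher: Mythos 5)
Your proposal is correct and follows essentially the same route as the paper: the forward direction is immediate, and the converse combines $I=\rad I$ with the equivalence (pseudoprime $\Leftrightarrow$ $\rad I$ prime) from Theorem~\ref{thm_pseudoprime}, while the reformulation comes from condition~(4) of that theorem and condition~(3) of Proposition~\ref{prop_radical}. The only difference is that you spell out the two inclusions for the set description (including the trivial case $S\notin\mathcal S$ and the properness of $\rad I$), which the paper leaves implicit; this is a harmless elaboration, not a different argument.
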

\begin{proof}
Clearly, any prime ideal is radical (prop.~\ref{prop_radical}) and pseudoprime.\\
Conversely, if $I\ideal\GenK$ is pseudoprime and radical, then $I=\rad I$ is prime by theorem \ref{thm_pseudoprime}. (Alternatively, in any commutative ring $R$ with $1$, $I\ideal R$ is prime iff $I$ is irreducible and radical \cite{Lambek73}.)
\end{proof}

\begin{prop}\label{prop_minimal_primes}\leavevmode
\begin{enumerate}
\item 
The set of minimal prime ideals of $\GenK$ equals $\{g(\mathcal F): \mathcal F\in P_*(\mathcal S)\}$.
\item If $I\idealproper\GenK$ is pseudoprime, then $m(I)$ is a minimal prime ideal. In particular, a (proper) prime ideal is minimal iff it is pure.
\item An ideal of $\GenK$ is pseudoprime iff it contains a prime ideal.
\item If $I,J\idealproper\GenK$ and $J$ is pseudoprime, then $I+J\ne \GenK$ iff $m(I)\subseteq m(J)$.
\item If $I, J\idealproper\GenK$ are pseudoprime, $I + J\in \{I,J,\GenK\}$.
\item If $I\idealproper\GenK$ is pseudoprime, $\radpart I$ is the largest prime ideal contained in $I$ and $\rad I$ is the smallest prime ideal containing $I$.
\end{enumerate}
\end{prop}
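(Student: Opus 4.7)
My plan is to prove part 2 first, since parts 1 and 3 then follow almost immediately. Let $I\idealproper\GenK$ be pseudoprime. To see that $m(I)$ is itself pseudoprime, take $a,b\in\GenK$ with $ab=0$; Lemma \ref{lemma_zero_divisors} supplies $S\subseteq(0,1)$ with $ae_S=0$ and $be_{\co S}=0$ (so $a=ae_{\co S}$ and $b=be_S$), while condition (4) of Theorem \ref{thm_pseudoprime} gives $e_S\in I$ or $e_{\co S}\in I$; in the first case $b=be_S\in m(I)$, and in the second $a=ae_{\co S}\in m(I)$. Since $m(I)$ is pure it is radical by Proposition \ref{prop_radical_intersection}(5), hence prime by Theorem \ref{thm_prime_characterization}. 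For minimality, let $P\subseteq m(I)$ be prime; for any idempotent generator $e_T$ of $m(I)$, pseudoprimality of $P$ forces $e_T\in P$ or $e_{\co T}\in P\subseteq m(I)$, but the latter combined with $e_T\in m(I)$ would give $1=e_T+e_{\co T}\in m(I)\subseteq I$, contradicting properness. Hence $m(I)\subseteq P$, so $P=m(I)$; the ``in particular'' clause follows since a prime $P$ is minimal iff $m(P)=P$ iff $P$ is pure.

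For part 1, every $g(\mathcal F)$ with $\mathcal F\in P_*(\mathcal S)$ is pure (generated by idempotents), proper (otherwise $1=e_T$ for some $T\in\mathcal F\subseteq\mathcal S$, contradicting $0\in\overline{\co T}$), and pseudoprime (directly from the defining property of $P_*(\mathcal S)$), so by part 2, $g(\mathcal F)=m(g(\mathcal F))$ is a minimal prime. Conversely, a minimal prime $P$ is pure by part 2, hence equals $m(P)=g(\mathcal F)$ for some $\mathcal F\in P_*(\mathcal S)$ by Theorem \ref{thm_pseudoprime}(5). Part 3 is then immediate: containing a prime makes an ideal pseudoprime tautologically, while a proper pseudoprime $I$ contains $m(I)$ which is prime by part 2, and $I=\GenK$ trivially contains the known (proper) prime ideals of $\GenK$.

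For part 4, suppose $I+J\ne\GenK$ and let $e_S\in I$ be idempotent; if $e_S\notin J$, pseudoprimality of $J$ forces $e_{\co S}\in J$, giving $1=e_S+e_{\co S}\in I+J$, a contradiction. Hence every such $e_S$ lies in $m(J)$, and since these generate $m(I)$, we have $m(I)\subseteq m(J)$. Conversely, if $I+J=\GenK$, the Gelfand ring identity $m(I)+J=\GenK$ recalled in the preliminaries, together with $m(I)\subseteq m(J)\subseteq J$, would force $J=\GenK$, contradicting properness. Part 5 then follows: if $I+J\ne\GenK$, part 4 applied in both directions yields $m(I)=m(J)=:P$, a prime ideal (by part 2) contained in both $I$ and $J$; since $P$ is pseudoprime, Theorem \ref{thm_pseudoprime}(2) says the ideals containing $P$ form a chain, so $I\subseteq J$ or $J\subseteq I$ and hence $I+J\in\{I,J\}$.

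For part 6, Theorem \ref{thm_pseudoprime}(6) already gives $\rad I$ prime, and any prime $P\supseteq I$ is radical and hence contains $\rad I$, so $\rad I$ is the smallest prime above $I$. For $\radpart I$: it is radical by Proposition \ref{prop_radical_intersection}(3); it is pseudoprime because for each $S$, whichever of $e_S,e_{\co S}$ lies in $I$ equals its own $n$-th power and hence belongs to $\bigcap_{n\in\N}I^n=\radpart I$; Theorem \ref{thm_prime_characterization} then makes it prime. Any prime $P\subseteq I$ is radical, hence contained in the largest radical ideal below $I$, namely $\radpart I$. The genuinely nontrivial step in the whole proposition is the one in part 2, establishing that $m(I)$ is not just pseudoprime but actually a minimal prime; everything else is then mechanical.
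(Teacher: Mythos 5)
Your proof is correct, and it is worth recording how it differs from the paper's. The paper proves part 1 essentially by citation to \cite{AJOS2006} (offering only an alternative argument that each $g(\mathcal F)$ is prime, via radicality and Theorem \ref{thm_pseudoprime}), and then deduces part 2 from part 1 combined with Theorem \ref{thm_pseudoprime}(5). You invert the dependency: you prove part 2 from scratch --- in particular you supply the minimality argument that the paper omits, namely that any prime $P\subseteq m(I)$ must contain every idempotent generator $e_T$ of $m(I)$, because $e_{\co T}\in P\subseteq m(I)$ together with $e_T\in m(I)$ would put $1=e_T+e_{\co T}$ into the proper ideal $m(I)$ --- and then read off part 1 as a corollary. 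This makes your version self-contained where the paper leans on an external reference, at the modest cost of redoing a pseudoprimality check for $m(I)$ that the paper gets for free from Theorem \ref{thm_pseudoprime}(5). Parts 3--6 follow the paper almost verbatim, with two harmless variants: in the converse direction of part 4 you invoke the Gelfand identity $I+J=\GenK\implies m(I)+J=\GenK$, whereas the paper uses $m(I+J)=m(I)+m(J)$; and in part 6 you verify pseudoprimality of $\radpart{I}$ directly through condition (4) of Theorem \ref{thm_pseudoprime} rather than through the inclusion $m(I)\subseteq\radpart{I}$ and part 3. All steps check out, including the properness of $g(\mathcal F)$ and the treatment of the improper ideal in part 3.
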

\begin{proof}
(1) This is proven in \cite{AJOS2006}. Alternatively, let $\mathcal F\in P_*(\mathcal S)$. By theorem \ref{thm_prime_characterization}, it is sufficient to show that $g(\mathcal F)$ is radical. Since an ideal generated by idempotent elements is idempotent, this follows from proposition \ref{prop_radical}.\\
(2) Combine part~1 with theorem \ref{thm_pseudoprime}(5).\\
(3) By part~2, a pseudoprime ideal contains a prime ideal. The converse implication holds in any commutative ring with $1$.\\
(4) If $m(I)\not\subseteq m(J)$, then there exists an idempotent $e_S\in m(I)\setminus m(J)$, since $m(I)$ is generated by idempotents. By part~2, $m(J)$ is prime, so $e_{\co S}\in m(J)$ and $1=e_S + e_{\co S}\in m(I) + m(J)\subseteq I+J$. Hence $I+J=\GenK$. Conversely, if $m(I)\subseteq m(J)$, then $m(I+J)= m(I)+ m(J)= m(J)\subsetneqq \GenK=m(\GenK)$. Hence $I+J\ne\GenK$.\\
(5) If $m(I)= m(J)$, then $I$, $J$ are ideals that contain a common prime ideal, hence $I\subseteq J$ or $J\subseteq I$ by theorem \ref{thm_pseudoprime}, so $I+J\in\{I,J\}$. Otherwise, $I+J=\GenK$ by part~4.\\
(6) $m(I)\subseteq \radpart I\subseteq \rad I$, so $\radpart I$, $\rad I$ are prime by theorem \ref{thm_prime_characterization}. As every prime ideal is radical, the statements follow by the definitions of $\radpart I$ and $\rad I$.
\end{proof}

\begin{prop}\label{prop_radical_advanced}
Let $I,J\idealproper\GenK$.
\begin{enumerate}
\item If $J$ is pseudoprime and $I\cap J$ is radical, then $I$ is radical or $J$ is prime.
\item If $I$ and $J$ are pseudoprime, $I\cap J$ is radical, $I\not\subseteq J$ and $J\not\subseteq I$, then $I$ and $J$ are prime.
\item Let $J$ be pseudoprime and $I\not\subseteq J$. If $m(I)\subseteq m(J)$, then $I+J$ is prime or $I+J=I+m(J)$. If $m(I)\not\subseteq m(J)$, then $I+J=\GenK$.
\item Let $I$ be radical, $J$ pseudoprime and $I\not\subseteq J$. If $m(I)\subseteq m(J)$, then $I+J$ is prime. If $m(I)\not\subseteq m(J)$, then $I+J=\GenK$.
\item $I=\bigcap_{I\subseteq P\atop P \text{ pseudoprime}}P$.
\end{enumerate}
\end{prop}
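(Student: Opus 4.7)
My plan is to prove (5) by a Zorn's lemma argument using the equivalence of pseudoprime and irreducible (Theorem \ref{thm_pseudoprime}); to prove (1) via the key observation that $I\subseteq\rad J$; and to derive (2), (3), (4) from Proposition \ref{prop_minimal_primes} together with a Chinese Remainder step and the fact that pseudoprimeness is preserved under enlargement. For (5), given $x\notin I$, Zorn's lemma applied to the nonempty family of ideals $K\supseteq I$ with $x\notin K$ (closed under chain unions) yields a maximal element $P$; if $P=K_1\cap K_2$ with $K_i\supsetneq P$, then by maximality $x\in K_i$ for $i=1,2$, so $x\in P$, a contradiction. Hence $P$ is irreducible, so pseudoprime by Theorem \ref{thm_pseudoprime}, and proper since $x\notin P$. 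This gives the nontrivial inclusion.

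For (1), assume $J$ is not prime, so $\rad J\supsetneq J$. The crucial observation is $I\cap J=I\cap\rad J$: given $x\in I\cap\rad J$ one has $x^n\in I\cap J$ for some $n$, and radicality yields $x\in I\cap J$. Since the ideals containing $J$ form a chain (Theorem \ref{thm_pseudoprime}(2)), $I+J$ and $\rad J$ are comparable; the alternative $\rad J\subseteq I+J$ is contradictory, because any $r\in\rad J\setminus J$, written as $r=a+b$ with $a\in I$, $b\in J$, forces $a\in I\cap\rad J=I\cap J\subseteq J$, whence $r\in J$. Therefore $I\subseteq I+J\subseteq\rad J$. Now for $x^2\in I$: since $\rad J$ is prime (Theorem \ref{thm_pseudoprime}(6)) and $x^2\in\rad J$, we get $x\in\rad J$; hence $x^2\in I\cap\rad J=I\cap J$, and radicality of $I\cap J$ gives $x\in I\cap J\subseteq I$, so $I$ is radical.

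For (2), Proposition \ref{prop_minimal_primes}(5) gives $I+J\in\{I,J,\GenK\}$, and incomparability of $I$ and $J$ forces $I+J=\GenK$. The Chinese Remainder Theorem then yields $\GenK/(I\cap J)\cong\GenK/I\times\GenK/J$; radicality of $I\cap J$ makes the left side reduced, so both factors are reduced, hence $I$ and $J$ are radical and thus prime (Theorem \ref{thm_prime_characterization}). For (3) and (4), the case $m(I)\not\subseteq m(J)$ is the contrapositive of Proposition \ref{prop_minimal_primes}(4). When $m(I)\subseteq m(J)$, $m(J)$ is prime (Proposition \ref{prop_minimal_primes}(2)), so the ideals containing $m(J)$ form a chain; the alternative $I+m(J)\subseteq J$ would give $I\subseteq J$ (excluded), so $J\subseteq I+m(J)$, whence $I+J=I+m(J)$, establishing (3). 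For (4), the extra hypothesis that $I$ is radical makes $I+m(J)$ a sum of two radical ideals, hence radical by Proposition \ref{prop_radical_intersection}(1); combined with pseudoprimeness inherited from $J$ and properness ensured by Proposition \ref{prop_minimal_primes}(4), Theorem \ref{thm_prime_characterization} gives that $I+J$ is prime. The main technical obstacle is the chain argument in (1) forcing $I\subseteq\rad J$; once that is in hand, the remaining parts assemble from previously established results.
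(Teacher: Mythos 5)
Your proof is correct, but it follows a genuinely different route from the paper's in almost every part, and in places it is sharper. For (5) the paper simply cites Subramanian's result for $l$-rings, whereas you give the standard self-contained Zorn argument (an ideal maximal among those containing $I$ and avoiding a fixed $x$ is irreducible, hence pseudoprime by Theorem \ref{thm_pseudoprime}); this is valid and makes the statement independent of the external reference. For (1) the paper constructs the prime ideal $P=(I\cap J)+m(J)$ and splits on $IJ\subseteq P$, while you instead prove $I\subseteq \rad J$ by comparing $I+J$ with $\rad J$ in the chain of ideals over $J$ and using the identity $I\cap J=I\cap\rad J$; both work, and yours localizes the use of primality to $\rad J$. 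For (2) the paper says ``inspect the proof of part 1,'' whereas your route through Proposition \ref{prop_minimal_primes}(5), comaximality, the Chinese Remainder Theorem and reducedness of a product is cleaner and fully explicit. For (3) your comparison of $I+m(J)$ with $J$ inside the chain of ideals containing the prime $m(J)$ yields the \emph{unconditional} conclusion $I+J=I+m(J)$ (given only $J$ pseudoprime and $I\not\subseteq J$), which is strictly stronger than the paper's disjunction and much shorter than the paper's two-step comparison via $\radpart{(I+J)}$; (4) then assembles exactly as in the paper from radicality of $I+m(J)$, pseudoprimeness, and properness via Proposition \ref{prop_minimal_primes}(4). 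In short: the paper's approach repeatedly manufactures an auxiliary prime ideal and uses the prime avoidance dichotomy, while yours leans more heavily on the total ordering of ideals above a pseudoprime ideal (Theorem \ref{thm_pseudoprime}(2)) plus standard ring-theoretic facts; the latter buys shorter arguments and, in (3), a stronger conclusion.
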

\begin{proof}
(1) Let $P=(I\cap J)+m(J)$. As $m(J)\subseteq P$, $P$ is pseudoprime by proposition \ref{prop_minimal_primes}. As $I\cap J$ is radical and $m(J)$ is pure (hence radical by proposition \ref{prop_radical_intersection}), $P$ is radical. By theorem \ref{thm_prime_characterization}, $P$ is prime. As $IJ\subseteq I\cap J\subseteq P$, either $I\subseteq P$ or $J\subseteq P$. If $I\subseteq P$, then $I\subseteq J$, so $I=I\cap J$ is radical. If $J\subseteq P$, then $J=P$ is prime.\\
(2) By part~1, $I$ is prime or $J$ is prime. By symmetry, w.l.o.g.\ $I$ is prime. Inspecting the proof of part~1, since $I\subseteq J$ is now excluded by hypothesis, we conclude that also $J$ is prime.\\
(3) Since $m(J)\subseteq m(I+J)\subseteq \radpart{(I+J)}$ and $m(J)$ is prime by proposition \ref{prop_minimal_primes}, either $I+\radpart{(I+J)}\subseteq J$ or $J\subseteq I+\radpart{(I+J)}$ by theorem \ref{thm_pseudoprime}. In the first case, $I\subseteq I+\radpart{(I+J)}\subseteq J$, which contradicts the hypotheses. So $J\subseteq I+\radpart{(I+J)}$. 
As $I+m(J)$, $\radpart{(I+J)}$ contain the prime ideal $m(J)$, either $I+m(J)\subseteq \radpart{(I+J)}$ or $\radpart{(I+J)}\subseteq I+m(J)$ by theorem \ref{thm_pseudoprime}. In the first case, $I+J\subseteq I+ \radpart{(I+J)}\subseteq \radpart{(I+J)}$, hence $I+J$ is radical and pseudoprime, hence prime or equal to $\GenK$ by theorem \ref{thm_prime_characterization}. Otherwise, $I+J\subseteq I + \radpart{(I+J)}\subseteq I+m(J)$, so $I+J=I+m(J)$. The statement follows by proposition \ref{prop_minimal_primes}(4).\\
(4) If $I+J=I+m(J)$, then $I$ and $m(J)$ are radical, hence $I+J$ is radical and pseudoprime, so $I+J$ is prime or $I+J=\GenK$ by theorem \ref{thm_prime_characterization}. The result follows by part~3.\\
(5) This holds in any commutative $l$-ring with $1$ in which every ideal is an $l$-ideal \cite[4.13]{Subramanian67}.
\end{proof}

\section{$z$-ideals in $\GenK$}
As the notion of $z$-ideal in the ring $\Cnt{}(X)$ of continuous functions on a topological space $X$ can be expressed by a purely algebraic condition \cite[4A]{GJ}, G.~Mason \cite{Mason73} used this condition to define a $z$-ideal of any commutative ring $R$ with $1$. 
\begin{df}
Denoting by $\Max(a)=\{M$ max.\ ideals of $R: a\in M\}$, $I\ideal R$ is a $z$-ideal iff
\[(\forall a\in R)(\forall b\in I)(\Max(a) = \Max(b)\implies a\in I).\]
\end{df}
In $\GenK$, the zeroes of a moderate net in $\K^{(0,1)}$ don't give rise to a definition of zeroes of an element $x\in\GenK$, because they depend on representatives, so the usual notion of $z$-ideal in $\Cnt{}(X)$ cannot directly be used in this context. But a natural generalization of the notion of zeroes of an element presents itself. We denote
\begin{align*}
\mathcal S_1&=\{S\subseteq(0,1): 0\in\overline S\}=\{S\subseteq(0,1): e_S\ne 0\}
=\mathcal S\cup\{S\subseteq(0,1): e_S=1\}.
\end{align*}

\begin{df}
Let $x\in \GenK$ and $S\in\mathcal S_1$. Then $x=0$ w.r.t.\ $S$ iff $e_S x =0$.
\end{df}
So subsets of $(0,1)$ take the role of `zeroes'. In this respect, it is natural to define the zero-set of $x\in\GenK$ by
$Z(x)=\{S\in\mathcal S_1: e_S x = 0\}$.\\
Sometimes, it is useful to formulate results about zeroes in terms of invertibility:
\begin{df}
$x\in\GenK$ is invertible w.r.t.\ $S\in\mathcal S_1$ iff $(\exists y\in\GenK)(xy=e_S)$.
\end{df}
Similarly, we define
$\Inv(x)=\{S\in\mathcal S_1: x \text{ is invertible w.r.t. }S\}$.\\
In analogy with $\Cnt{}(X)$, we would then say that $I\ideal \GenK$ is a $z$-ideal iff
\[(\forall a\in \GenK)(\forall b\in I)(Z(a)=Z(b)\implies a\in I).\]
Fortunately, these two notions coincide, as we will see in theorem \ref{thm_z-ideals_equiv}.

First, we collect some elementary properties of $Z$ and $\Inv$.
\begin{lemma}\label{lemma_zero-inv}
Let $n\in\N$. Let $a$, $b$, $c$, $a_n$ denote elements of $\GenK$; $(a_\eps)_\eps$, $(b_\eps)_\eps$ representatives of $a$, $b$; $S$, $T$ elements of $\mathcal S_1$.
\begin{enumerate}
\item $S, T\in Z(a)\implies S\cup T\in Z(a)$\\
$S,T\in \Inv(a)\implies S\cup T\in\Inv(a)$
\item $S\in Z(a)$, $T\subseteq S \implies T\in Z(a)$\\$S\in \Inv(a)$, $T\subseteq S \implies T\in \Inv(a)$
\item
\begin{enumerate}
	\item $S\in Z(a)$ iff $(\forall m\in\N)(\exists \eta >0)(\forall\eps\in S\cap (0,\eta))(\abs{a_\eps}\le\eps^m)$
	\item $S\in \Inv(a)$ iff $(\exists m\in\N)(\exists \eta >0)(\forall\eps\in S\cap (0,\eta))(\abs{a_\eps}\ge\eps^m)$
\end{enumerate}
\item
\begin{enumerate}
	\item $S\in Z(a)$ iff $(\forall T\in\mathcal S_1, T\subseteq S)(T\notin\Inv(a))$
	\item $S\in\Inv(a)$ iff $(\forall T\in\mathcal S_1, T\subseteq S)(T\notin Z(a))$
\end{enumerate}
\item $Z(a)\subseteq Z(b)$ iff $\Inv(b)\subseteq\Inv(a)$
\item $Z(a)=Z(a^n)=Z(\abs{a})$ and $\Inv(a)=\Inv(a^n)=\Inv(\abs{a})$
\item
\begin{enumerate}
	\item if $b\in a\GenK$, then $Z(a)\subseteq Z(b)$
	\item if $\abs{b}\le \abs{a}$, then $Z(a)\subseteq Z(b)$
	\item if $a\GenK + b\GenK= c\GenK$, then $Z(a)\cap Z(b)= Z(c)$
\end{enumerate}
\item $S\in Z(a)$ iff $a\in e_{\co S}\GenK$\\
$S\in\Inv(a)$ iff $e_S\in a\GenK$
\item $Z(e_S)=\{T\in \mathcal S_1: 0\notin \overline{S\cap T}\}=\{T\in \mathcal S_1: (\exists \eta > 0)(S\cap T\cap (0,\eta)=\emptyset)\}$\\
$\Inv(e_S)= \{T\in\mathcal S_1: 0\notin \overline{T\setminus S}\}=\{T\in\mathcal S_1: (\exists \eta > 0)(T\cap (0,\eta)\subseteq S)\}$\\
In particular, $\Inv(e_S)= Z(e_{\co S})$
\item
	\begin{enumerate}
	\item $\Inv(a)\cap\Inv(b)=\Inv(ab)$
	\item $Z(a)\cap Z(b)=Z(\abs{a}^2+\abs{b}^2)=Z(\abs{a}+\abs{b})=Z(\abs{a}\vee\abs{b})$
	\item if $A\subseteq \GenK$ and $\sup_{a\in A}\abs{a}$ exists, then $\bigcap_{a\in A} Z(a)=Z\big(\sup_{a\in A}\abs{a}\big)$
	\item if $a_n\to 0$, then $\bigcap_{n\in\N} Z(a_n)=Z\big(\sum_{n\in\N} \abs{a_n}^2\big)=Z\big(\sum_{n\in\N} \abs{a_n}\big)$
	\end{enumerate}
\item if $S\in Z(ab)$, then $S\in Z(a)$ or $S\in Z(b)$ or $(\exists T\subseteq S)(T\in Z(a)\,\,\&\,\, S\setminus T\in Z(b))$\\
if $S\in \Inv(a+b)$, then $S\in\Inv(a)$ or $S\in\Inv(b)$ or $(\exists T\subseteq S)(T\in\Inv(a)\,\,\&\,\, S\setminus T\in\Inv(b))$
\item $ab=0$ iff $\Inv(a)\cap \Inv(b)=\emptyset$ iff $\Inv(a)\subseteq Z(b)$
\item $Z(a)\subseteq Z(b)$ iff $\Ann(a)\subseteq \Ann(b)$
\item if $I\ideal\GenK$ is closed, then $\{Z(x): x\in I\}$ is closed under countable intersection.
\end{enumerate}
\end{lemma}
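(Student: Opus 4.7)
The backbone of the lemma is item (3), which rephrases $S \in Z(a)$ and $S \in \Inv(a)$ as pointwise estimates on any representative: negligibility of $(\kar S(\eps) a_\eps)_\eps$ respectively a lower bound $\abs{a_\eps} \ge \eps^m$ on $S$. I would prove (3)(a) directly from the definition of $\Null(\K)$ and (3)(b) from the standard invertibility criterion \cite[Thm.~1.2.38]{GKOS} applied to $\kar S \cdot a$. With (3) in hand, the rest of the lemma is essentially bookkeeping, mediated by two structural tools already established in the paper: Lemma \ref{lemma_zero_divisors} (splitting zero products via complementary idempotents) and Lemma \ref{lemma_principal_ideals} (principal ideals determined by absolute values).

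Items (1), (2), (4)--(9) are then routine: closure under union in (1) via $e_{S\cup T} = e_S + e_T - e_S e_T$ (for $Z$) and, for $\Inv$, by setting $z = x + y - xya$ when $xa = e_S$, $yb = e_T$; (2) via $e_T = e_T e_S$; (4)--(5) from (3); (6) because $a, a^n, \abs{a}$ have identical pointwise vanishing and invertibility behaviour; (7) from $a\GenK = \abs{a}\GenK$ and the $l$-ideal property of principal ideals; (8) via the decomposition $a = e_S a + e_{\co S} a$; (9) from $e_S e_T = e_{S\cap T}$. Item (10) reduces to direct multiplication of inverses in (10)(a) and, in (10)(b)--(d), to the observation that a sum of nonnegative terms annihilated by $e_S$ forces each summand to vanish; (10)(d) additionally uses ultrametric completeness of the sharp topology to guarantee that $\sum\abs{a_n}^2$ converges whenever $a_n \to 0$.

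The substantive items are (11)--(13). For (11), the $Z$-part applies Lemma \ref{lemma_zero_divisors} to $(e_S a)(e_S b)=0$, producing $T = S \cap U \subseteq S$ with $e_T a = 0$ and $e_{S\setminus T} b = 0$. The $\Inv$-part uses (3)(b) with the triangle inequality: given $\abs{a_\eps+b_\eps}\ge\eps^m$ on $S$, split $S$ along $T=\{\eps\in S:\abs{a_\eps}\ge\abs{b_\eps}\}$; on $T$, $2\abs{a_\eps}\ge\abs{a_\eps+b_\eps}\ge\eps^m$ forces $\abs{a_\eps}\ge\eps^{m+1}$ for small $\eps$, hence $T\in\Inv(a)$, and symmetrically $S\setminus T\in\Inv(b)$. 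Item (12) decomposes into the logical equivalence $\Inv(a)\cap\Inv(b)=\emptyset \iff \Inv(a)\subseteq Z(b)$ (immediate from (4)) and the equivalence $ab=0 \iff \Inv(a)\subseteq Z(b)$, a straightforward representative-level estimate combining the moderate bound on $a$ off a set $T_k=\{\abs{a_\eps}\ge\eps^k\}$ with the fact that $T_k\in\Inv(a)\subseteq Z(b)$ forces $\abs{b_\eps}$ to be arbitrarily negligible on $T_k$. For (13), the non-trivial direction uses Lemma \ref{lemma_zero_divisors} to find $S\in Z(a)$ with $e_{\co S} x=0$ for $x\in\Ann(a)$, then transfers via $Z(a)\subseteq Z(b)$ to conclude $xb=x(e_S b)+(e_{\co S} x)b=0$.

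The main obstacle is item (14): realising an intersection $\bigcap_{n\in\N}Z(x_n)$ as a single $Z(y)$ with $y\in I$. The plan is to force convergence by rescaling with a power of $\alpha$: choose $k_n\in\N$ large enough that $v(\alpha^{k_n}x_n)\ge n$, so that $\alpha^{k_n}x_n\to 0$ in sharp norm. Invertibility of $\alpha^{k_n}$ yields $Z(\alpha^{k_n}x_n)=Z(x_n)$ by the principal-ideal computation underlying (6)--(7), and norm-convexity of every ideal of $\GenK$ gives $\abs{\alpha^{k_n}x_n}\in I$. In the complete, ultrametric sharp topology, $y:=\sum_{n\in\N}\abs{\alpha^{k_n}x_n}$ converges, and closedness of $I$ forces $y\in I$. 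Applying (10)(d) to $(\abs{\alpha^{k_n}x_n})_n$ then yields $Z(y)=\bigcap_n Z(\abs{\alpha^{k_n}x_n})=\bigcap_n Z(x_n)$, as desired.
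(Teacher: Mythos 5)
Your proposal is correct and follows essentially the same route as the paper: establish the representative-level characterizations in (3)--(4) first, derive the algebraic items from them, use Lemma \ref{lemma_zero_divisors} for the splitting arguments in (11) and (13), and use ultrametric completeness plus a summability normalization for (14). Your local variants --- the triangle-inequality splitting for the $\Inv$ half of (11), the direct level-set estimate for $ab=0\Leftarrow\Inv(a)\subseteq Z(b)$ in (12), and rescaling by $\alpha^{k_n}$ in (14) where the paper truncates with $\abs{a_n}\wedge\alpha^n$ --- all work and are only cosmetically different.
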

\begin{proof}
(1), (2) Immediate.\\
(3a) $\Leftarrow$: on representatives, $(a e_S)_\eps=\begin{cases}a_\eps,&\eps\in S\\0,&\text{otherwise.}\end{cases}$\\
So $(a e_S)_\eps$ is a negligible net, i.e., $a e_S=0$.\\
(3b) $\Leftarrow$: let $b_\eps=\begin{cases}1/a_\eps,&\eps\in S\cap (0,\eta)\\0,&\text{otherwise.}\end{cases}$\\
Then $(b_\eps)_\eps$ is a moderate net, so it represents $b\in\GenK$ for which $ab=e_S$.\\
(4a) $\Rightarrow$: if $ae_S=0$, then for $T\in\mathcal S_1$ and $b\in\GenK$, $ab=e_T$ implies that $0=abe_S=e_Te_S=e_{T\cap S}$. So $T\not\subseteq S$.\\
(4b) $\Rightarrow$: if $\exists b\in\GenK$ such that $ab=e_S$, then for $T\in\mathcal S_1$, $ae_T=0$ implies that $0=abe_T=e_Se_T=e_{S\cap T}$. So $T\not\subseteq S$.\\
(3,4 a) We prove the remaining implication. Suppose $(\exists m\in\N)(\forall\eta>0)(\exists \eps\in S\cap (0,\eta))(\abs{a_\eps}>\eps^m)$, then there exists a sequence $(\eps_n)_{n\in\N}$, with each $\eps_n\in S$ and $\abs{a_{\eps_n}}>\eps^m$, such that $\eps_n\to 0$. So $\{\eps_n:n\in\N\}\in\Inv(a)$.\\
(3,4 b) We prove the remaining implication. Suppose $(\forall m\in\N)(\forall\eta>0)(\exists \eps\in S\cap (0,\eta))(\abs{a_\eps}<\eps^m)$, then there exists a sequence $(\eps_n)_{n\in\N}$, with each $\eps_n\in S$ and $\abs{a_{\eps_n}}<\eps^n$, such that $\eps_n\to 0$. So $\{\eps_n:n\in\N\}\in Z(a)$.\\
(5) If $S\in\Inv(b)\setminus\Inv(a)$, then by part 4, there exists $T\subseteq S$, $T\in\mathcal S_1$, such that $T\in Z(a)\setminus Z(b)$. The converse implication follows dually.\\
(6) $a e_S = 0$ iff $a^n e_S = (a e_S)^n = 0$ iff $\abs{a}e_S = 0$. The second assertion follows from part 5.\\
(7a) If $ac=b$ and $ae_S =0$, then $be_S=ace_S=0$.\\
(7b) By order convexity of ideals in $\GenK$.\\
(7c) If $ae_S=be_S=0$ and $c=xa+yb$, then also $ce_S=0$.\\
The converse inclusion follows by part~7a.\\
(8) Let $S\in Z(a)$, so $a e_S = a(1-e_{\co S}) = 0$. So $a = a e_{\co S}$. Conversely, let $b\in\GenK$ and $a = b e_{\co S}$. Then $a e_S= b e_{\co S} e_S = 0$. Notice that the statement is equivalent to $\GenK e_S=\{x\in\tilde K: e_{\co S}\in Z(x)\}$.\\
(9) $e_Se_T=e_{S\cap T}=0$ iff $0\notin\overline{S\cap T}$.\\
By part 8, $T\in\Inv(e_S)$ iff $e_T\in\GenK e_S$ iff $e_{\co S}\in Z(e_T)$ iff $0\notin\overline{T\setminus S}$.\\
(10a) If $a x=e_S$ and $b y=e_S$, then $(ab)(xy)=e_S$. Conversely, if $(ab)c=e_S$, then $a(bc)= b(ac)= e_S$.\\
(10b) As $\abs{a}^2+\abs{b}^2\ge \abs{a}^2\ge 0$, $(\abs{a}^2+\abs{b}^2)e_S=0$ implies that $ae_S=0$ and, similarly, $be_S=0$. The converse follows from $(\abs{a}^2+\abs{b}^2)e_S=\abs{ae_S}^2+\abs{be_S}^2$.\\
The remaining equalities follow from part 7c and lemma \ref{lemma_principal_ideals}.\\
(10c) $\subseteq$: if $a e_S=0$, $\forall a\in A$, then $\sup_{a\in A}\abs{a}\ge (\sup_{a\in A}\abs{a})e_{\co S}\ge \abs{a} e_{\co S}= \abs{a}$, $\forall a\in A$. Hence $\sup_{a\in A}\abs{a}=(\sup_{a\in A}\abs{a}) e_{\co S}$, i.e., $(\sup_{a\in A}\abs{a})e_S=0$.\\
$\supseteq$: by part 7b.\\
(10d) as $\GenR$ is an ultrametric topology, $a_n\to 0$ implies that $\sum_n\abs{a_n}$ converges (and $a_n\to 0$ implies that $a_n^2\to 0$, hence also $\sum_n\abs{a_n}^2$ converges).\\
$\subseteq$: if $a_n e_S=0$, $\forall n$, then $\big(\sum \abs{a_n}\big)e_S=\sum \big(\abs{a_n}e_S\big)=0$. Similarly, $\big(\sum\abs{a_n}^2\big)e_S=0$.\\
$\supseteq$: by part 7b.\\
(11) Fix representatives $(a_\eps)_\eps$ of $a$ and $(b_\eps)_\eps$ of $b$. Let $T=\{\eps\in S: \abs{a_\eps}\le\abs{b_\eps}\}$.\\
$0\le (\abs{a}e_T)^2\le\abs{a}\abs{b}e_T=0$, so $ae_T=0$; similarly $be_{S\setminus T}=0$. If $e_T=e_S$, $S\in Z(a)$; if $e_T=0$, $S\in Z(b)$; otherwise, $e_T\ne 0$ and $e_{S\setminus T}\ne 0$, hence $T\in Z(a)$ and $S\setminus T\in Z(b)$.\\
Let $(a+b)c=e_S$. If $T\in\mathcal S_1$ and $T\notin\Inv(b)$, then by part~4 there exists $U\in{\mathcal S}_1$ with $U\subseteq T$ such that $be_U=0$. Further, $0\le \abs{a}e_U\le\abs{b}e_U =0$, so $a e_U=0$, and $e_U= (a+b)ce_U= 0$, a contradiction. So $e_T=0$ or $T\in\Inv(b)$. Similarly, $e_{S\setminus T}=0$ or $S\setminus T\in\Inv(a)$.\\
(12) By part 10, $ab=0$ iff $\Inv(ab)=\Inv(a)\cap\Inv(b)=\emptyset$. Clearly, if $\Inv(a)\subseteq Z(b)$, then $\Inv(a)\cap\Inv(b)\subseteq Z(b)\cap\Inv(b)=\emptyset$. Conversely, if $ab=0$, $S\in\mathcal S_1$ and $ac=e_S$, then $b e_S = b a c= 0$.\\
(13) $\Rightarrow$: let $x\in\Ann(a)$, i.e., $ax=0$. By part 12, $\Inv(x)\subseteq Z(a)\subseteq Z(b)$, so $bx=0$, i.e., $x\in\Ann(b)$.\\
$\Leftarrow$: Let $S\in \mathcal S_1$. $S\in Z(a)$ iff $ae_S=0$ iff $e_S\in\Ann(a)$.\\
(14) Let $a_n\in I$, $\forall n\in\N$. As $0\le\abs{a_n}\wedge \alpha^n\le\alpha^n\to 0$, and $\GenK$ is an ultrametric topology, $\sum_{n\in\N}\abs{a_n}\wedge \alpha^n$ converges to some $a\in \GenK$. By absolute order convexity of ideals, $a\in \overline I=I$. By part 10, $Z(a)=\bigcap_n Z(\abs{a_n}\wedge\alpha^n)=\bigcap_n Z(a_n)$.
\end{proof}

Now we are ready to prove the equivalence of the definitions of $z$-ideal in $\GenK$.
\begin{thm}\label{thm_z-ideals_equiv}
Let $a, b\in\GenK$. Then $\Max(a)\subseteq \Max(b)\iff Z(a)\subseteq Z(b)$.
\end{thm}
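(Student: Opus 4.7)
The plan is to pass through annihilators. By Lemma \ref{lemma_zero-inv}(13), $Z(a)\subseteq Z(b)$ is equivalent to $\Ann(a)\subseteq\Ann(b)$, so it suffices to prove the equivalence $\Max(a)\subseteq\Max(b)\iff\Ann(a)\subseteq\Ann(b)$. Both directions will be handled by contraposition, exploiting two features of $\GenK$ recalled in Section~2.1: its Jacobson radical vanishes, and every non-invertible element is a zero divisor.

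For $\Max(a)\subseteq\Max(b)\Rightarrow\Ann(a)\subseteq\Ann(b)$ I argue contrapositively. Suppose $x\in\Ann(a)$ satisfies $xb\ne 0$. Since the Jacobson radical of $\GenK$ is $\{0\}$, there exists a maximal ideal $M$ not containing the nonzero element $xb$. In particular $x\notin M$ and $b\notin M$, while $xa=0\in M$ together with primality of $M$ forces $a\in M$. Hence $M\in\Max(a)\setminus\Max(b)$.

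For the converse I again argue contrapositively. Suppose $M$ is a maximal ideal with $a\in M$ and $b\notin M$; I construct $x\in\Ann(a)\setminus\Ann(b)$. Maximality of $M$ yields $y\in\GenK$ with $m:=1-by\in M$, and I set $c:=\abs{a}+\abs{m}\in M$. If $c\ne 0$, then $c$ is a non-unit of $\GenK$, hence a zero divisor, so there is $x\ne 0$ with $xc=0$. Lemma \ref{lemma_principal_ideals}(1) gives $c\GenK=\abs{a}\GenK+\abs{m}\GenK$, so both $\abs{a}$ and $\abs{m}$ lie in $c\GenK$, producing $x\abs{a}=x\abs{m}=0$ and therefore $xa=xm=0$ (using $\abs{xa}=\abs{x}\abs{a}$). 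The equation $xm=x-xby=0$ rewrites as $x=xby$, which rules out $xb=0$ since $x\ne 0$. The degenerate possibility $c=0$ forces $a=0$ and $by=1$, so $\Ann(a)=\GenK$ while $\Ann(b)=\{0\}$, and any nonzero $x$ does the job.

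The delicate step is the converse direction, specifically the passage from the ideal-theoretic datum $b\notin M$ to an explicit element of $\Ann(a)\setminus\Ann(b)$. The trick is to package $a$ and $m=1-by$ into the single element $c=\abs{a}+\abs{m}$, so that any $x\in\Ann(c)$ simultaneously annihilates $a$ and $m$; the identity $x=xby$ then forces $xb\ne 0$ automatically.
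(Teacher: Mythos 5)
Your proof is correct, and it takes a genuinely different route from the paper's. The paper argues directly with the sets $Z$ and $\Inv$: for $\Rightarrow$ it uses Lemma \ref{lemma_zero-inv}(4) to extract from $S\in Z(a)\setminus Z(b)$ a subset $T\subseteq S$ with $e_T\ne 0$ and $T\in\Inv(b)$, and then picks a maximal ideal containing the non-unit $e_{\co T}$; for $\Leftarrow$ it applies the decomposition of $\Inv(m+bc)$ from Lemma \ref{lemma_zero-inv}(11) to the relation $m+bc=1$. You instead reroute everything through annihilators via Lemma \ref{lemma_zero-inv}(13) and then rely on three global facts recalled in the paper: the Jacobson radical of $\GenK$ vanishes (giving the maximal ideal avoiding $xb$ in one direction), non-invertible elements are zero divisors, and the Bezout/$l$-ideal identity $\abs{a}\GenK+\abs{m}\GenK=(\abs{a}+\abs{m})\GenK$ of Lemma \ref{lemma_principal_ideals}(1) (packaging $a$ and $m=1-by$ into the single non-unit $c$ in the other). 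There is no circularity, since part (13) of the lemma is proved before and independently of the theorem. What each approach buys: the paper's proof is constructive at the level of idempotents and stays entirely inside the $Z/\Inv$ calculus it has just set up, which is what the rest of Section 5 uses; yours is more conceptual and shows the theorem is essentially a formal consequence of $Z(a)\subseteq Z(b)\iff\Ann(a)\subseteq\Ann(b)$ together with standard ring-theoretic properties of $\GenK$, at the cost of importing the Jacobson-radical result from \cite{AJ2001} and of hiding the combinatorial work inside Lemma \ref{lemma_zero-inv}(13). One step you use silently is that $a\in M$ implies $\abs{a}\in M$; this is justified because every ideal of $\GenK$ is absolutely order convex (norm convex), as recalled in Section 2, or directly from $a\GenK=\abs{a}\GenK$.
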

\begin{proof}
$\Rightarrow$: let $S\in Z(a)\setminus Z(b)$, i.e., $ae_S=0$ and $be_S\ne 0$. By lemma \ref{lemma_zero-inv}, there exists $T\subseteq S$ with $e_T\ne 0$ such that $b$ is invertible w.r.t.\ $T$. As $e_T\ne 0$, $e_{\co T}$ is not invertible, so there exists a maximal ideal $M$ containing $e_{\co T}$. Since $ae_T= a e_S e_T=0$, $a = a e_{\co T}\in M$. As $b$ is invertible w.r.t.\ $T$ and $e_T\notin M$, also $b\notin M$. So $M\in \Max(a)\setminus \Max(b)$.\\
$\Leftarrow$: let $M\in \Max(a)\setminus \Max(b)$, so $a\in M$ and $b\notin M$. By the maximality of $M$, $M + b\GenK=\GenK$. Let $m\in M$ and $c\in \GenK$ such that $m+bc=1$. By lemma \ref{lemma_zero-inv}(11), either $m$ is invertible (which is impossible since $m\in M$), or there exists $T\subseteq (0,1)$ with $e_T\ne 0$ such that $T\in\Inv(bc)\subseteq\Inv(b)$ and $e_{\co T}\in m\GenK\subseteq M$. As $a\in M$ and $e_T\notin M$, $e_T\notin a\GenK$, i.e., $T\notin\Inv(a)$. So $\Inv(b)\not\subseteq\Inv(a)$, i.e., $Z(a)\not\subseteq Z(b)$ by lemma \ref{lemma_zero-inv}.
\end{proof}

Consequently, the two notions of $z$-ideal coincide.
We can use the general theory of $z$-ideals to obtain some of their properties. Some of the statements are proven for $z$-ideals in $\Cnt{}(X)$ in \cite{Azarpanah07}, \cite{Mason80}.
\begin{prop}\label{prop_z-closure}\leavevmode
\begin{enumerate}
\item For $I\ideal\GenK$,
\begin{align*}
\zclosure{I}:&=\{x\in\GenK: (\exists a\in I)(Z(x)= Z(a))\} =\{x\in\GenK: (\exists a\in I)(Z(x)\supseteq Z(a))\}\\
&=\{x\in\GenK: (\exists a\in I)(\Inv(x)= \Inv(a))\}
=\{x\in\GenK: (\exists a\in I)(\Inv(x)\subseteq \Inv(a))\}\\
&=\{x\in\GenK: (\exists a\in I)(\Max(x)= \Max(a))\}
=\{x\in\GenK: (\exists a\in I)(\Max(x)\supseteq \Max(a))\}
\end{align*}
is the smallest $z$-ideal containing $I$. We call it the $z$-closure of $I$. $I$ is a $z$-ideal iff $I=\zclosure I$.
\item For $I\ideal\GenK$, $I\subseteq\rad I\subseteq\zclosure{I}$. Hence $\zclosure{(\rad I)}=\zclosure I$ and every $z$-ideal is radical. A (proper) $z$-ideal is prime iff it is pseudoprime.
\item If $I\ideal\GenK$ and $J$ is a $z$-ideal, then also $(J:I)=\{x\in\GenK: xI\subseteq J\}$ is a $z$-ideal.
\end{enumerate}
\end{prop}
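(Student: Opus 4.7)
The plan is to establish the six set equalities first, then show the common set is the smallest $z$-ideal containing $I$, and finally derive (2) formally and (3) by a direct $\Inv$-computation. The equivalences among the three pairs of descriptions in (1) follow from Lemma \ref{lemma_zero-inv}(5), which gives $Z(x) \supseteq Z(a) \iff \Inv(a) \supseteq \Inv(x)$, together with Theorem \ref{thm_z-ideals_equiv}, which gives the corresponding equivalence with $\Max$. So the six descriptions collapse to two: the equality and containment versions. Denoting the containment version by $\zclosure I$, it is routine to verify that $\zclosure I$ is an ideal containing $I$: closure under sums uses $Z(x+y) \supseteq Z(x) \cap Z(y)$ combined with $Z(a) \cap Z(a') = Z(|a|+|a'|)$ from Lemma \ref{lemma_zero-inv}(10b), and closure under multiplication by an element of $\GenK$ uses Lemma \ref{lemma_zero-inv}(7a). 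It is then immediate from the definition that $\zclosure I$ is a $z$-ideal.

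The crux, which I expect to be the main obstacle, is to show simultaneously that the equality version coincides with $\zclosure I$ and that $\zclosure I$ is contained in every $z$-ideal $J \supseteq I$. Given $x \in \zclosure I$ with witness $a \in I \subseteq J$ satisfying $Z(x) \supseteq Z(a)$, set $b = xa \in J$. By Lemma \ref{lemma_zero-inv}(7a), $Z(b) \supseteq Z(x) \cup Z(a) = Z(x)$. For the reverse inclusion, invoke Lemma \ref{lemma_zero-inv}(11): every $S \in Z(b)$ admits a splitting $S = T \cup (S \setminus T)$ with $T \in Z(a) \subseteq Z(x)$ and $S \setminus T \in Z(x)$, whence $S \in Z(x)$ by Lemma \ref{lemma_zero-inv}(1). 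So $Z(b) = Z(x)$ with $b \in J$, and the $z$-ideal property of $J$ yields $x \in J$. Taking $J = \zclosure I$ shows the equality and containment versions of $\zclosure I$ agree, and taking arbitrary $J$ gives minimality; the biconditional $I = \zclosure I \iff I$ is a $z$-ideal then follows formally.

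For (2), if $x^n \in I$ then $Z(x) = Z(x^n)$ by Lemma \ref{lemma_zero-inv}(6), so $x \in \zclosure I$, proving $\rad I \subseteq \zclosure I$. The identity $\zclosure{(\rad I)} = \zclosure I$ and the fact that every $z$-ideal is radical are then immediate (from $I \subseteq \rad I \subseteq \zclosure I$ together with the sandwich $I = \zclosure I$ for $z$-ideals), and a proper $z$-ideal is prime iff pseudoprime by Theorem \ref{thm_prime_characterization}. For (3), let $x \in (J:I)$ with $Z(y) = Z(x)$ and fix $a \in I$. By Lemma \ref{lemma_zero-inv}(10a), $\Inv(ya) = \Inv(y) \cap \Inv(a) = \Inv(x) \cap \Inv(a) = \Inv(xa)$, hence $Z(ya) = Z(xa)$ by Lemma \ref{lemma_zero-inv}(5). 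Since $xa \in J$ and $J$ is a $z$-ideal, $ya \in J$; as $a$ was arbitrary, $y \in (J:I)$.
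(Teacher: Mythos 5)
Your proof is correct, but it takes a genuinely different and more self-contained route than the paper. The paper outsources almost everything to Mason's general theory of $z$-ideals in commutative rings: for part (1) it verifies the single hypothesis $\Max(a)\cap\Max(b)=\Max(\abs{a}^2+\abs{b}^2)$ (via Lemma \ref{lemma_principal_ideals} and radicality of maximal ideals) and then quotes Mason's Prop.~1.13 for the containment description, the general identity $\Max(a)\subseteq\Max(x)\iff\Max(x)=\Max(ax)$ for the equality description, and Mason again for parts (2) and (3). You instead work entirely inside $\GenK$ with the $Z$/$\Inv$ calculus of Lemma \ref{lemma_zero-inv}, and your key step --- $Z(a)\subseteq Z(x)$ implies $Z(xa)=Z(x)$, proved via the splitting in Lemma \ref{lemma_zero-inv}(11) --- is exactly the $Z$-level analogue of the paper's $\Max$-level identity, which in a general ring follows trivially from primeness of maximal ideals but here needs the explicit decomposition of zero sets. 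What your approach buys is independence from the external reference and a proof that exhibits the witness concretely; what the paper's buys is brevity and the observation that $\GenK$ satisfies the standing hypothesis of Mason's framework, which it reuses elsewhere (e.g.\ in Proposition \ref{prop_z_part}(2)). Two small presentational points: in Lemma \ref{lemma_zero-inv}(11) the conclusion is a trichotomy, so you should also dispose of the cases $S\in Z(x)$ and $S\in Z(a)$ directly (both are immediate since $Z(a)\subseteq Z(x)$); and the phrase ``taking $J=\zclosure I$'' only yields the tautology $\zclosure I\subseteq\zclosure I$ --- what actually identifies the equality and containment versions is that your witness $b=xa$ lies in $I$ itself (not merely in $J$), so $x$ belongs to the equality version with witness $b$. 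Neither point is a gap.
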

\begin{proof}
(1) Let $M$ be a maximal ideal in $\GenK$, let $a$, $b\in \GenK$ and let $\abs{a}^2+\abs{b}^2\in M$. By lemma \ref{lemma_principal_ideals}, $a^2\in M$ and $b^2\in M$. By the maximality of $M$, $M=\rad M$, hence $a,b\in M$. So $\Max(a)\cap\Max(b)=\Max(\abs{a}^2+\abs{b}^2)$. Under this condition, $\zclosure I=\{x\in\GenK: (\exists a\in I)(\Max(x)\supseteq \Max(a))\}$ \cite[Prop.~1.13]{Mason73}. As in any commutative ring with $1$, $\Max(a)\subseteq \Max(x)\iff \Max(x)=\Max(ax)$, so also $\zclosure I=\{x\in\GenK: (\exists a\in I)(\Max(x)= \Max(a))\}$. The other equalities follow from theorem \ref{thm_z-ideals_equiv} and lemma \ref{lemma_zero-inv}.\\
(2) $I\subseteq\rad I\subseteq \zclosure I$ holds in any commutative ring with $1$ \cite{Mason73}. The last assertion then follows from theorem \ref{thm_prime_characterization}.\\
(3) holds in any commutative ring with $1$ \cite[Prop.~1.3]{Mason73}.
\end{proof}

\begin{prop}\label{prop_z_part}\leavevmode
\begin{enumerate}
\item For a family $(I_\lambda)_{\lambda\in\Lambda}$ of ideals $I_\lambda\ideal\GenK$, $\zclosure{(\sum_{\lambda\in\Lambda} I_\lambda)}=\sum_{\lambda\in\Lambda} \zclosure{(I_\lambda)}$. In particular, the sum of a family of $z$-ideals is a $z$-ideal.
\item For $I,J\ideal\GenK$, $\zclosure{I}\cap \zclosure{J}=\zclosure{(I\cap J)}$.
\item For $I\ideal\GenK$, $\zpart I:=\{x\in\GenK: \zclosure{(x\GenK)}\subseteq I\}$ is the largest $z$-ideal contained in $I$. We call it the $z$-part of $I$. $I$ is a $z$-ideal iff $I=\zpart I$.
\item For a family $(I_\lambda)_{\lambda\in\Lambda}$ of ideals $I_\lambda\ideal\GenK$, $\bigcap_{\lambda\in\Lambda} \zpart I_\lambda=\zpart{(\bigcap_{\lambda\in\Lambda} I_\lambda)}$. In particular, the intersection of a family of $z$-ideals is a $z$-ideal.
\item For $I\ideal\GenK$, $m(I)\subseteq\zpart I\subseteq \radpart I\subseteq I$. In particular, every pure ideal of $\GenK$ is a $z$-ideal. If $I\idealproper\GenK$ is pseudoprime, then $\zpart I$ is prime.
\end{enumerate}
\end{prop}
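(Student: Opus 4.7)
The plan is to treat the five parts in order: (1) and (2) are the sum and intersection formulas for the $z$-closure; (3) and (4) unfold almost formally from the definition of $\zpart{}$ once (1) is available; and (5) reduces to a short verification plus an appeal to theorem \ref{thm_prime_characterization}. I expect part (1) to be the main obstacle, as the remaining parts then reduce to bookkeeping.

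For part (1), the inclusion $\sum_\lambda\zclosure{(I_\lambda)}\subseteq\zclosure{(\sum_\lambda I_\lambda)}$ is monotonicity of $\zclosure{}$. For the reverse, given $x$ with $Z(a)\subseteq Z(x)$ for some $a=\sum_{i=1}^n a_{\lambda_i}$ with $a_{\lambda_i}\in I_{\lambda_i}$, I first replace $a$ by $a':=\sum_{i=1}^n\abs{a_{\lambda_i}}$: each $\abs{a_{\lambda_i}}$ still lies in $I_{\lambda_i}$ since the $I_{\lambda_i}$ are $l$-ideals (resp.\ norm convex), and $\abs{a}\le a'$ gives $Z(a')\subseteq Z(a)\subseteq Z(x)$ by lemma \ref{lemma_zero-inv}(6),(7b). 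I then partition $(0,1)$ into disjoint sets $T_1,\dots,T_n$ where, on fixed representatives, $T_i$ is the set of $\eps$ at which $\abs{a_{\lambda_i,\eps}}$ is largest (breaking ties by index). Setting $x_i:=xe_{T_i}$ gives $x=\sum_i x_i$ since $\sum_i e_{T_i}=1$. The pointwise estimate $a'e_{T_i}\le n\abs{a_{\lambda_i}}e_{T_i}$ yields $Z(\abs{a_{\lambda_i}})\subseteq Z(x_i)$: for $S\in Z(\abs{a_{\lambda_i}})$ one has $a'e_{T_i\cap S}=0$, hence $T_i\cap S\in Z(a')\subseteq Z(x)$, so $x_ie_S=xe_{T_i\cap S}=0$. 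Thus $x_i\in\zclosure{(I_{\lambda_i})}$, as required.

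For part (2), $\supseteq$ is monotonicity. For $\subseteq$, given $x\in\zclosure{I}\cap\zclosure{J}$ pick $a\in I$, $b\in J$ with $Z(a),Z(b)\subseteq Z(x)$; then $ab\in I\cap J$, and I verify $Z(ab)\subseteq Z(x)$ via lemma \ref{lemma_zero-inv}(11): any $S\in Z(ab)$ either lies in $Z(a)$, or in $Z(b)$, or splits as $T\cup(S\setminus T)$ with $T\in Z(a)$ and $S\setminus T\in Z(b)$, in which last case $xe_S=xe_T+xe_{S\setminus T}=0$. Parts (3) and (4) are then formal: $\zpart{I}\subseteq I$ is immediate; closure of $\zpart{I}$ under addition uses $\zclosure{((x+y)\GenK)}\subseteq\zclosure{(x\GenK)}+\zclosure{(y\GenK)}$ from part (1); closure under scalar multiplication is automatic. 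That $\zpart{I}$ is a $z$-ideal comes from $Z(y)\supseteq Z(x)\Rightarrow\zclosure{(y\GenK)}\subseteq\zclosure{(x\GenK)}$, and maximality together with part (4) follows directly from the definition.

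For part (5), $\zpart{I}\subseteq\radpart{I}$ holds because $\zpart{I}$ is a $z$-ideal, hence radical by proposition \ref{prop_z-closure}(2), while $\radpart{I}$ is the largest radical ideal inside $I$ by proposition \ref{prop_radical_intersection}(3). The inclusion $m(I)\subseteq\zpart{I}$ reduces to showing $m(I)$ is itself a $z$-ideal, which is short: if $a\in m(I)$ and $Z(a)\subseteq Z(x)$, pick an idempotent $e_S\in m(I)$ with $a=ae_S$; then $\co S\in Z(a)\subseteq Z(x)$, so $x=xe_S\in m(I)$. Finally, when $I\idealproper\GenK$ is pseudoprime, $m(I)$ is a minimal prime by proposition \ref{prop_minimal_primes}(2), so $\zpart{I}\supseteq m(I)$ is pseudoprime by proposition \ref{prop_minimal_primes}(3) and radical (as a $z$-ideal); since $\zpart{I}\subseteq I$ is proper, theorem \ref{thm_prime_characterization} yields that $\zpart{I}$ is prime.
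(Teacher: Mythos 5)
Your proof is correct. For parts (3)--(5) it follows the paper essentially verbatim (the only cosmetic difference in (5) is that you obtain $m(I)\subseteq\zpart I$ by first checking that $m(I)$ is a $z$-ideal and then invoking maximality of $\zpart I$, where the paper directly verifies $\zclosure{(x\GenK)}\subseteq I$ for $x\in m(I)$ --- the same computation). In part (1) your decomposition is the natural $n$-fold version of the paper's: you partition $(0,1)$ into $T_1,\dots,T_n$ according to which summand dominates and split $x$ accordingly in one step, whereas the paper treats two ideals with the same dominance set $S=\{\eps:\abs{\alpha_\eps}\ge\abs{\beta_\eps}\}$ and then reduces an arbitrary sum to a finite one and iterates; the ideas are identical, and your uniform estimate $a'e_{T_i}\le n\abs{a_{\lambda_i}}e_{T_i}$ is a clean way to package the comparison. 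The genuine divergence is part (2): the paper disposes of it by citing Mason's general result for rings in which $\Max(a)\cap\Max(b)=\Max(\abs{a}^2+\abs{b}^2)$ (the condition checked in the proof of proposition \ref{prop_z-closure}(1)), while you give a self-contained argument, taking $ab\in IJ\subseteq I\cap J$ as the witness and verifying $Z(ab)\subseteq Z(x)$ via the splitting in lemma \ref{lemma_zero-inv}(11). Your route is more elementary and stays entirely within the paper's own lemmas; the citation route is shorter and makes clear that (2) is not special to $\GenK$. Both are valid, and the small technical points in your argument (e.g.\ that $T_i\cap S$ or $\co S$ might fail to lie in $\mathcal S_1$, in which case the relevant products vanish trivially) are harmless.
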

\begin{proof}
(1) We first show that $\zclosure{(I+J)}\subseteq\zclosure I +\zclosure J$, $\forall I,J\ideal\GenK$.\\
Let $a\in \zclosure{(I+J)}$. So $Z(a)=Z(\alpha + \beta)$, for some $\alpha\in I$, $\beta\in J$. Choose representatives $(\alpha_\eps)_\eps$ of $\alpha$ and $(\beta_\eps)_\eps$ of $\beta$. Let $S=\{\eps\in (0,1): \abs{\alpha_\eps}\ge\abs{\beta_\eps}\}$. Then $a=a e_S + a e_{\co S}$. We show that $a e_S\in \zclosure I$ and $a e_{\co S}\in \zclosure J$.\\
Let $T\in Z(\alpha)$, i.e., $\alpha e_{T}=0$. In particular, $\alpha e_{S\cap T}=0$. As $0\le \abs{\beta}e_S\le \abs{\alpha}e_S$, also $\beta e_{S \cap T}=0$. So $S\cap T\in Z(\alpha +\beta)=Z(a)$, i.e., $a e_S e_T = 0$ and $T\in Z(a e_S)$. So $Z(\alpha)\subseteq Z(ae_S)$. Similarly, $a e_{\co S}\in \zclosure J$.\\
Now for a family $(I_\lambda)_{\lambda\in\Lambda}$ of ideals, clearly $\sum_{\lambda\in\Lambda} \zclosure{(I_\lambda)}\subseteq \zclosure{(\sum_{\lambda\in\Lambda} I_\lambda)}$. Conversely, if $x\in\zclosure{(\sum_{\lambda\in\Lambda} I_\lambda)}$, then for some finite $\Lambda_0\subseteq\Lambda$, $x\in\zclosure{(\sum_{\lambda\in\Lambda_0} I_\lambda)} \subseteq \sum_{\lambda\in\Lambda_0} \zclosure{(I_\lambda)} \subseteq \sum_{\lambda\in\Lambda} \zclosure{(I_\lambda)}$.\\
(2) holds for rings satisfying the condition that was verified in the proof of prop.\ \ref{prop_z-closure}(1) \cite[Prop.~1.13]{Mason73}.\\
(3) $\zpart I\ideal\GenK$, since for $x,y\in\GenK$, $\zclosure{((x+y)\GenK)}\subseteq \zclosure{(x\GenK)} + \zclosure{(y\GenK)}$ and $\zclosure{(xy\GenK)} \subseteq \zclosure{(x\GenK)}$. Let $x\in \zpart I$ and $Z(x)=Z(y)$. Then $\zclosure{(x\GenK)}=\zclosure{(y\GenK)}$, so also $y\in \zpart I$. Hence $\zpart I$ is a $z$-ideal contained in $I$. Now let $J$ be a $z$-ideal contained in $I$. Then for each $x\in J$, $\zclosure{(x\GenK)}\subseteq\zclosure J=J\subseteq I$. Hence $J\subseteq \zpart I$.\\
(4) $x\in \bigcap_{\lambda\in\Lambda} \zpart I_\lambda$ iff $(\forall\lambda\in\Lambda)(\zclosure{(x\GenK)}\subseteq I_\lambda)$ iff $x\in \zpart{\big(\bigcap_{\lambda\in\Lambda} I_\lambda\big)}$ (cf.\ also \cite[Lemma~3.4]{Mason80}).\\
(5) Let $x\in m(I)$. Then $x=xe_S$, for some $e_S\in I$. Let $y\in \zclosure{(x\GenK)}$. As $x e_{\co S}=0$, also $y e_{\co S}=0$, i.e., $y=y e_S\in I$. Hence $x\in \zpart I$, and $m(I)\subseteq \zpart I$. By proposition \ref{prop_z-closure}, $\rad{x\GenK}\subseteq \zclosure{(x\GenK)}$ for each $x\in\GenK$, so $\zpart I\subseteq \radpart I$. If $I$ is pseudoprime, then $m(I)$ is prime by proposition \ref{prop_minimal_primes}, hence $\zpart I\supseteq m(I)$ is pseudoprime and radical by proposition \ref{prop_z-closure}, hence prime by theorem \ref{thm_prime_characterization}.
\end{proof}

\begin{prop}\label{prop_z-ideals_advanced}Let $I,J\idealproper\GenK$.
\begin{enumerate}
\item If $J$ is a $z$-ideal and $J\subseteq\rad I$, then $J\subseteq I$.
\item $I$ is a $z$-ideal iff $\rad I$ is a $z$-ideal.
\item If $I$ and $J$ are pseudoprime and $I\cap J$ is a $z$-ideal, then $I$ is a $z$-ideal or $J$ is a prime $z$-ideal.
\item If $I$ and $J$ are pseudoprime, $I\cap J$ is a $z$-ideal, $I\not\subseteq J$ and $J\not\subseteq I$, then $I$, $J$ are prime $z$-ideals.
\item Let $J$ be pseudoprime and $I\not\subseteq J$. If $m(I)\subseteq m(J)$, then $I+J$ is a prime $z$-ideal or $I+J=I+m(J)$; if $m(I)\not\subseteq m(J)$, then $I+J=\GenK$.
\item Let $I$ be a $z$-ideal, $J$ pseudoprime and $I\not\subseteq J$. If $m(I)\subseteq m(J)$, then $I+J$ is a prime $z$-ideal. If $m(I)\not\subseteq m(J)$, then $I+J=\GenK$.
\item Every $z$-ideal of $\GenK$ is an intersection of prime z-ideals.
\end{enumerate}
\end{prop}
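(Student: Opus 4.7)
All seven parts rest on part~(1); parts~(2)--(7) follow by mirroring proposition~\ref{prop_radical_advanced} with ``$z$-ideal'' in place of ``radical'' and $\zpart{I}$ in place of $\radpart{I}$. So the real work is in~(1).

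For part~(1), take $x\in J$. Since $J$ is a $z$-ideal, $\zclosure{x\GenK}\subseteq J\subseteq\rad{I}$. By proposition~\ref{prop_z-closure}(1), $\zclosure{x\GenK}=\{y:Z(y)\supseteq Z(x)\}$, so every idempotent $e_S$ with $Z(e_S)\supseteq Z(x)$ lies in $\rad{I}$ and hence, being idempotent, in $I$. It therefore suffices to produce such an $e_S$ also satisfying $xe_{\co S}=0$, for then $x=xe_S\in I$. Pick a representative $(x_\eps)_\eps$ of $x$ and $\nu\in\N$ with $\abs{x}^\nu\in I$ (from $x\in\rad{I}$), and for $k\in\N$ put $S_k=\{\eps:\abs{x_\eps}^\nu\geq\eps^k\}$. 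The net $(\chi_{S_k}(\eps)/\abs{x_\eps}^\nu)_\eps$ is moderate (bounded by $\eps^{-k}$), and its product with $\abs{x}^\nu$ is $e_{S_k}$, giving $e_{S_k}\in\abs{x}^\nu\GenK\subseteq I$; a direct decay comparison (for $T\in Z(x)$, $\eps\in T\cap S_k$ small forces $\eps^{k/\nu}\leq\abs{x_\eps}\leq\eps^{k'}$ for every $k'$, hence $k'\leq k/\nu$, so $T\cap S_k$ does not accumulate at~$0$) shows $Z(e_{S_k})\supseteq Z(x)$. Meanwhile on $\co S_k$ we have $\abs{x_\eps}<\eps^{k/\nu}$, so $\abs{x}e_{\co S_k}\leq\alpha^{k/\nu}$ in the sharp norm. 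The main obstacle is upgrading the approximation $\abs{x}=\abs{x}e_{S_k}+\abs{x}e_{\co S_k}$---first piece in $I$, tail vanishing in the sharp topology as $k\to\infty$---to actual membership $\abs{x}\in I$. Since $\abs{x}e_{\co S_k}\in J\subseteq\rad{I}$, applying the same construction to the tail yields further idempotents in $I$ inside $\co S_k$ peeling off smaller residuals; iterating and unioning the disjoint idempotents so produced gives a single set $S$ for which $xe_{\co S}=0$ and $Z(e_S)\supseteq Z(x)$, hence $e_S\in I$ and $x=xe_S\in I$.

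Part~(2) is immediate: the forward direction uses that $z$-ideals are radical (proposition~\ref{prop_z-closure}(2)), so $I=\rad{I}$; the backward direction applies~(1) with $J=\rad{I}$ to force $\rad{I}\subseteq I$. For parts~(3)--(4), set $P=(I\cap J)+m(J)$: since $m(J)$ is pure hence a $z$-ideal (proposition~\ref{prop_z_part}(5)), $P$ is a $z$-ideal (proposition~\ref{prop_z_part}(1)); and $P\supseteq m(J)$ prime (proposition~\ref{prop_minimal_primes}(2)) makes $P$ pseudoprime, hence prime (proposition~\ref{prop_z-closure}(2)). Then $IJ\subseteq P$ forces $I\subseteq P$ (giving $I\subseteq J+m(J)=J$, so $I=I\cap J$ is a $z$-ideal) or $J\subseteq P=J$ (a prime $z$-ideal), which is~(3); under $I\not\subseteq J$ and $J\not\subseteq I$, applying this with $P$ and its swap $P'=(J\cap I)+m(I)$ forces both $I$ and $J$ to be prime $z$-ideals, yielding~(4). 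For parts~(5)--(6), the case $m(I)\not\subseteq m(J)$ follows from proposition~\ref{prop_minimal_primes}(4); the case $m(I)\subseteq m(J)$ mirrors proposition~\ref{prop_radical_advanced}(3)--(4) with $\zpart{(I+J)}$ in place of $\radpart{(I+J)}$, and in~(6) the alternative $I+J=I+m(J)$ is a sum of two $z$-ideals, hence a pseudoprime $z$-ideal, hence prime. Finally for~(7), a $z$-ideal $I$ satisfies $I\subseteq\zpart{P}$ for every prime $P\supseteq I$ (proposition~\ref{prop_z_part}(3)), each $\zpart{P}$ is a prime $z$-ideal (proposition~\ref{prop_z_part}(5)), and $\bigcap_{P\text{ prime},\,P\supseteq I}\zpart{P}$ is a $z$-ideal (proposition~\ref{prop_z_part}(4)) inside $\bigcap_{P\text{ prime},\,P\supseteq I}P=\rad{I}$, hence inside $I$ by~(1).
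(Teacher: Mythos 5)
Parts (2)--(7) of your plan are essentially the paper's route: (3)--(6) mirror proposition \ref{prop_radical_advanced} with the $z$-part in place of the radical part, and your direct argument for (7) via $\zpart P$ is a correct self-contained alternative to the paper's citation of Mason (note it does not actually need part (1): a $z$-ideal is radical, so $\bigcap_{I\subseteq P}P=\rad I=I$ directly). The problem is part (1), on which (2)($\Leftarrow$) rests.

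Your proof of (1) ultimately requires a set $S$ with both $xe_{\co S}=0$ and $Z(e_S)\supseteq Z(x)$, and such a set does not exist for general $x$. Indeed, $Z(e_S)\supseteq Z(x)$ is equivalent to $\Inv(e_S)\subseteq\Inv(x)$ by lemma \ref{lemma_zero-inv}(5), and $S\in\Inv(e_S)$ whenever $e_S\ne 0$; so your two conditions give $S\in\Inv(x)$ and $\co S\in Z(x)$, which by proposition \ref{prop_edged_generator} forces $x\GenK=e_S\GenK$, i.e.\ $x$ must have stationary level sets. (The degenerate cases $e_S=0$ and $e_{\co S}=0$ give $x=0$, resp.\ $x$ invertible.) For an $x$ with non-stationary level sets --- e.g.\ the element $\beta$ of example \ref{ex_z-prime}, whose principal ideal is not generated by any idempotent --- no such $S$ exists, so the iteration you sketch cannot converge to one: each level-set idempotent $e_{S_k}$ is indeed in $I$ with $Z(e_{S_k})\supseteq Z(x)$, but the union $S=\bigcup_k S_k$ retains only $xe_{\co S}=0$ and loses the zero-set inclusion (a $T\in Z(x)$ can meet every $S_k$ in a set not accumulating at $0$ while $S\cap T$ still accumulates at $0$), and peeling off residuals one level set at a time never changes this. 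The paper avoids idempotents altogether: setting $y=\abs{x}\alpha^N\le 1$, it forms the single non-idempotent element $a=\sum_{n\ge 1}\alpha^n\sqrt[n]{y}$, which satisfies $Z(a)\supseteq Z(x)$ and hence lies in $\zclosure{J}=J\subseteq\rad I$, so $a^n\in I$ for some $n$; since $a\ge\alpha^n\sqrt[n]{y}\ge 0$, order convexity gives $\sqrt[n]{y}\in a\GenK$, whence $y\in a^n\GenK\subseteq I$ and $x\in y\GenK\subseteq I$. Some device of this kind --- a single element of $J$ dominating all roots of $\abs{x}$ simultaneously --- is what your argument is missing.
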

\begin{proof}
(1) Let $x\in J$. As $x\in\GenK$, $y=\abs{x}\alpha^N\le 1$, for some $N\in\N$. Let $a=\sum_{n=1}^\infty \alpha^n \sqrt[n]{y}$. As $0\le\sqrt[n]y\le 1$, $\forall n\in\N$, the sum converges. Further, if $S\in Z(x)=Z(y)$, then $a e_S=\sum_{n=1}^\infty \alpha^n \sqrt[n]{y}e_S=0$, so also $S\in Z(a)$. Hence $a\in \zclosure{J}= J \subseteq\rad I$. So there exists $n\in\N$ such that $a^n\in I$. As $a\ge \alpha^n\sqrt[n]y\ge 0$, $\sqrt[n]y\in a\GenK$, hence $x\in y\GenK\subseteq a^n\GenK\subseteq I$.\\
(2) $\Rightarrow$: by proposition \ref{prop_z-closure}. $\Leftarrow$: by part~1.\\
(3--6): by proposition \ref{prop_z_part}(5), this is completely analogous to the proof of proposition \ref{prop_radical_advanced}(1--4) (using the $z$-part instead of the radical part).\\
(7) This holds in any commutative ring with $1$ \cite[1.0--1.1]{Mason73}.
\end{proof}

\section{Closed ideals in $\GenK$}
With the following definition, we want to formalize a number of methods applied at the level of representatives in \cite{AJ2001}.
\begin{df}
Let $a\in\GenK$. Fix a representative $(a_\eps)_\eps$ of $a$. Let for each $n\in\N$, $L_n=\{\eps\in (0,1): \abs{a_\eps}\ge \eps^n\}$. Then we call $(L_n)_{n\in\N}$ a \defstyle{sequence of level sets} for $a$.
\end{df}
As this definition depends on representatives, a sequence of level sets is not unique. However, many useful properties do not depend on the chosen representative. E.g., each sequence of level sets is increasing (w.r.t.\ $\subseteq$) and $a= \lim_n a e_{L_n}$ for each sequence of level sets $(L_n)_{n\in\N}$ for $a$. Some further properties are given in the next proposition.
\begin{prop}\label{prop_level_sets}
Let $a\in\GenK$ and let $(L_n)_{n\in\N}$ be a sequence of level sets for $a$.
\begin{enumerate}
\item $\Inv(a)=\{S\in{\mathcal S}_1: (\exists m\in\N)(\exists \eta >0)(S\cap (0,\eta)\subseteq L_m)\}$.
\item $m(a\GenK)=\lspan{e_S: S\in\Inv(a)}=\lspan{e_{L_n}: n\in\N}$.\\
In particular, $m(a\GenK)$ is generated by a countable number of idempotent elements.
\item $\Ann(a)=\bigcap_{n\in\N} e_{\co L_n}\GenK$.
\item Let $I\ideal\GenK$ be countably generated. Then $m(I)$ is the pure part of a principal ideal.
\end{enumerate}
\end{prop}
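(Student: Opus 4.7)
Part~1 is essentially a direct translation of Lemma~\ref{lemma_zero-inv}(3b): the condition $\abs{a_\eps} \ge \eps^m$ for all $\eps \in S \cap (0,\eta)$ is by definition the inclusion $S \cap (0,\eta) \subseteq L_m$, so the claim follows by unfolding.

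For part~2, I would first use the characterization $m(J) = \lspan{e \in J : e^2 = e}$ from the exchange-ring discussion in the preliminaries, together with the fact that the idempotents of $\GenK$ are exactly the $e_S$, and Lemma~\ref{lemma_zero-inv}(8), which says $e_S \in a\GenK$ iff $S \in \Inv(a)$. This yields the first equality $m(a\GenK) = \lspan{e_S : S \in \Inv(a)}$. For the second equality: by part~1 (taking $m = n$), each $L_n$ lies in $\Inv(a)$, giving one inclusion. Conversely, for $S \in \Inv(a)$ with $S \cap (0,\eta) \subseteq L_m$, the net $\chi_S - \chi_{S \cap (0,\eta)} = \chi_{S \setminus (0,\eta)}$ vanishes on $(0,\eta)$ and hence is negligible, so $e_S = e_{S \cap (0,\eta)} = e_{S \cap (0,\eta)}\, e_{L_m}$, which lies in $\lspan{e_{L_n} : n \in \N}$.

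For part~3, by Lemma~\ref{lemma_zero-inv}(12), $b \in \Ann(a)$ iff $\Inv(a) \subseteq Z(b)$. The forward direction is immediate: each $L_n$ is either in $\Inv(a)$ (so $be_{L_n} = 0$, and Lemma~\ref{lemma_zero-inv}(8) gives $b \in e_{\co L_n}\GenK$) or satisfies $e_{L_n} = 0$ (so $e_{\co L_n} = 1$, trivially). Conversely, if $be_{L_n} = 0$ for every $n$, then the factorization $e_S = e_{S \cap (0,\eta)}\, e_{L_m}$ from part~2 yields $be_S = 0$ for every $S \in \Inv(a)$.

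Part~4 carries the real content. Writing $I = \lspan{a_k : k \in \N}$, the Gelfand-ring identity $\sum_\lambda m(I_\lambda) = m(\sum_\lambda I_\lambda)$ recalled in the preliminaries gives $m(I) = \sum_k m(a_k\GenK)$, and part~2 expresses each $m(a_k\GenK)$ as $\lspan{e_{L^{(k)}_n} : n \in \N}$. So $m(I)$ is generated by a countable family of idempotents $e_{S_j}$, $j \in \N$. I would then pass to the increasing sequence $T_n := S_1 \cup \cdots \cup S_n$ (with $T_0 := \emptyset$): using $e_{S \cup T} = e_S + e_T - e_S e_T$ together with $e_{S_k} = e_{S_k}\, e_{T_k}$ one verifies $\lspan{e_{T_n} : n \in \N} = \lspan{e_{S_j} : j \in \N} = m(I)$. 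Now fix representatives making the $T_n$ genuine nested subsets of $(0,1)$ and define $a \in \GenK$ on a representative by $a_\eps = \eps^n$ when $\eps \in T_n \setminus T_{n-1}$ and $a_\eps = 0$ otherwise. The net $(a_\eps)_\eps$ is bounded by $1$, hence moderate. Computing level sets: for $\eps \in T_n \setminus T_{n-1}$ one has $\abs{a_\eps} = \eps^n \ge \eps^m$ iff $n \le m$ (using $\eps \in (0,1)$), i.e.\ iff $\eps \in T_m$; and $\eps \notin \bigcup_n T_n$ gives $a_\eps = 0 \not\ge \eps^m$. Hence $L_m = T_m$ for every $m$, and by part~2, $m(a\GenK) = \lspan{e_{L_m} : m \in \N} = \lspan{e_{T_n} : n \in \N} = m(I)$, finishing the proof.

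The only genuine difficulty is in part~4: one must construct a single element whose level-set idempotents generate a prescribed countable system. The key trick is to first condense the generating idempotents to the nested sequence $(T_n)$, which can then be encoded as disjoint layers in the representative of $a$ where $a_\eps$ equals successively smaller powers of $\eps$; this alignment makes the $L_n$ coincide with the $T_n$ exactly.
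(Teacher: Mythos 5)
Your proposal is correct and follows essentially the same route as the paper: part 1 by unfolding Lemma \ref{lemma_zero-inv}(3b), part 2 via $e_S=e_Se_{L_m}$, part 3 from part 2 (the paper closes the converse with $xa=\lim_n xae_{L_n}=0$ while you use Lemma \ref{lemma_zero-inv}(12), a cosmetic difference), and part 4 by the Gelfand identity $m(\sum I_\lambda)=\sum m(I_\lambda)$ followed by condensing the countably many level-set idempotents into a nested sequence and encoding it as $\eps^n$ on the successive layers, exactly as in the paper. The only nit is that in part 2 one should note $L_n\in\Inv(a)$ only when $e_{L_n}\ne 0$ (otherwise $e_{L_n}=0$ lies in the ideal trivially), which does not affect the argument.
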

\begin{proof}
(1) By lemma \ref{lemma_zero-inv}.\\
(2) If $e_S\in m(a\GenK)$, for some $S\subseteq (0,1)$ with $e_S\ne 0$, then $S\in \Inv(a)$, so $m(a\GenK)\subseteq \lspan{e_S: S\in\Inv(a)}$.\\
If $S\in\Inv(a)$, then, by part 1, $e_S= e_S e_{L_n}$ for some $n\in\N$, so $\lspan{e_S: S\in\Inv(a)}\subseteq \lspan{e_{L_n}: n\in\N}$.\\
By part 1, $L_n\in\Inv(a)$ if $e_{L_n}\ne 0$. So $e_{L_n}\in a\GenK$, $\forall n$. So $\lspan{e_{L_n}: n\in\N}\subseteq m(a\GenK)$.\\
(3) $\subseteq$: let $x\in \Ann(a)$. By part 2, $e_{L_n}\in a\GenK$, $\forall n$. So $e_{L_n}x=0$, $\forall n$, i.e., $x= xe_{\co L_n}$, $\forall n$.\\
$\supseteq$: if $x\in e_{\co L_n}\GenK$, $\forall n$, then $x e_{L_n}=0$, $\forall n$. So $x a= \lim_n x a e_{L_n}=0$.\\
(4) Let $I=\lspan{a_n: n\in\N}$ ($a_n\in\GenK$). Let $(L_{n,m})_{m\in\N}$ be a sequence of level sets for each $a_n$. By part 2 and the fact that $\GenK$ is a Gelfand ring, $m(I)=m(\sum_n a_n\GenK)=\sum_n m(a_n\GenK)=\lspan{e_{L_{n,m}}: m,n\in \N}$. Now let $\kappa$: $\N^2\to\N$ a bijection, let $S_m=\bigcup_{\kappa(i,j)\le m}L_{i,j}$ and let $\beta$ be the element with representative $(\beta_\eps)_\eps$, where $\beta_\eps=\eps^m$, for $\eps\in S_m\setminus S_{m-1}$, and $\beta_\eps=0$, for $\eps\in(0,1)\setminus\bigcup_n S_n$. By part 2, $m(\beta\GenK)=\lspan{e_{S_n}:n\in\N}=\lspan{e_{L_{n,m}}: m,n\in \N}$.
\end{proof}

The (topologically) closed ideals can be characterized by means of $\Inv$.
\begin{thm}\label{thm_closed_characterization}
Let $I\idealproper \GenK$. Then
\begin{align*}
\overline I &= \{x\in\GenK: (\forall S\in \Inv(x))(e_S\in I)\}\\
&= \{x\in\GenK: m(x\GenK)\subseteq I\}\\
&=\{x\in\GenK: (\text{for each sequence $(L_n)_{n\in\N}$ of level sets for $x$})(\forall n\in\N)(e_{L_n}\in I)\}\\
&=\{x\in \GenK: (\forall n\in\N)(\exists S\subseteq(0,1))(e_{S}\in I \,\&\,\abs{x}e_{\co S}\le\alpha^n)\}\\
&=\overline{m(I)}.
\end{align*}
Every closed ideal in $\GenK$ is the closure of a pure ideal and $m(\overline I)=m(I)$.
\end{thm}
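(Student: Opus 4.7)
Label the five candidate descriptions of $\overline I$ as $A_2,\dots,A_5$, in order of appearance. The equalities $A_2=A_3=A_4$ are immediate from Proposition~\ref{prop_level_sets}(2), which identifies $m(x\GenK)$ with both $\lspan{e_S:S\in\Inv(x)}$ and $\lspan{e_{L_n}:n\in\N}$ (for any sequence of level sets): containment of $m(x\GenK)$ in $I$ is the same as containment of either generating set, and this is independent of the chosen sequence of level sets. The strategy is then to close the cycle via $\overline I\subseteq A_2\subseteq A_5\subseteq\overline I$ and afterwards to deduce the identifications $\overline I=\overline{m(I)}$ and $m(\overline I)=m(I)$.

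The two easy inclusions run as follows. For $A_2\subseteq A_5$, given $x\in A_2$ and a sequence of level sets $(L_n)_{n\in\N}$ for $x$, taking $S:=L_n$ yields $e_S\in m(x\GenK)\subseteq I$ and, by the very definition of $L_n$, $\abs{x}e_{\co L_n}\le\alpha^n$ on representatives. For $A_5\subseteq\overline I$, when $e_S\in I$ and $\abs{x}e_{\co S}\le\alpha^n$, the element $y:=xe_S\in I$ satisfies $\abs{x-y}=\abs{x}e_{\co S}\le\alpha^n$, so $x$ is approximated in the sharp topology by elements of $I$.

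The main step is $\overline I\subseteq A_2$, where a topological approximation must be promoted to an algebraic invertibility statement; this is where the whole argument hinges. Given $x\in\overline I$ and $S\in\Inv(x)$, Lemma~\ref{lemma_zero-inv}(3b) furnishes $m\in\N$ and $\eta>0$ with $\abs{x_\eps}\ge\eps^m$ on $S\cap(0,\eta)$. Pick $y\in I$ with $\abs{x-y}\le\alpha^{m+1}$; then on $S\cap(0,\eta')$ for a possibly smaller $\eta'$ one has $\abs{y_\eps}\ge\abs{x_\eps}-\abs{x_\eps-y_\eps}\ge\eps^m-\eps^{m+1}\ge\eps^{m+1}$. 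This shows $S\in\Inv(y)$, whence $e_S\in y\GenK\subseteq I$ by Lemma~\ref{lemma_zero-inv}(8).

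For the remaining equalities, $\overline{m(I)}\subseteq\overline I$ is trivial. Conversely, if $x\in\overline I=A_2$ then each $e_{L_n}\in m(x\GenK)\subseteq I$ is an idempotent lying in $I$ and so in $m(I)$; hence $xe_{L_n}\in m(I)$, and combining with the remark $x=\lim_n xe_{L_n}$ made before Proposition~\ref{prop_level_sets} gives $x\in\overline{m(I)}$. In particular, every closed ideal equals the closure of its pure part. Finally $m(I)\subseteq m(\overline I)$ since $m(I)$ is a pure subideal of $\overline I$; conversely, any nonzero idempotent $e_S\in m(\overline I)\subseteq\overline I$ satisfies $S\in\Inv(e_S)$ (with witness $y=1$ in $e_S y=e_S$), so the $A_2$ characterization of $\overline I$ forces $e_S\in I$, hence $e_S\in m(I)$, proving $m(\overline I)=m(I)$.
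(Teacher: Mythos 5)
Your proof is correct and follows essentially the same route as the paper: a cycle of inclusions through the five descriptions, with the key step $\overline I\subseteq\{x:(\forall S\in\Inv(x))(e_S\in I)\}$ resting on the openness of invertibility with respect to $S$ (which you verify concretely on representatives, while the paper invokes it abstractly), and the remaining identifications handled exactly as in the paper via Proposition~\ref{prop_level_sets} and the idempotent generation of pure parts.
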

\begin{proof}
(i) let $a\in \overline I$. If $a$ is invertible w.r.t.\ $S\in\mathcal S_1$, then there is a sharp neighbourhood $U$ of $a$ such that each element of $U$ is invertible w.r.t.\ $S$. As $a\in \overline I$, there exists $x\in U\cap I$ and $y\in\GenK$ such that $xy=e_S$. So $e_S\in I$.\\
(ii) If $e_S\in I$, $\forall S\in \Inv(x)$, then $m(x\GenK)=\lspan{e_S:S\in\Inv(x)}\subseteq I$ by proposition \ref{prop_level_sets}.\\
(iii) If $m(x\GenK)\subseteq I$ and $(L_n)_{n\in\N}$ is a sequence of level sets for $x$, then by proposition \ref{prop_level_sets}, $e_{L_n}\in m(x\GenK)\subseteq I$, $\forall n\in\N$.\\
(iv) If $(L_n)_{n\in\N}$ is a sequence of level sets for $x$, then $\abs{x}e_{\co L_n}\le\alpha^n$.\\
(v) if for each $n\in\N$, $e_{S_n}\in I$ and $\abs{x}e_{\co S_n}\le\alpha^n$, then $\lim_{n\to\infty} x e_{S_n}=x$. So $x\in\overline{m(I)}$.\\
(vi) $m(I)\subseteq I$, so $\overline{m(I)}\subseteq \overline I$.\\
If $e_S\in \overline I$, then $S\in\Inv(e_S)$, so by the characterization, $e_S\in I$. So $m(\overline I)\subseteq m(I)$.
\end{proof}

The characterization of maximal ideals proven in~\cite{AJ2001} can be viewed in the context of the characterization of the closed ideals.
\begin{thm}\label{thm_max_ideals}
\begin{enumerate}
\item Let $P\idealproper \GenK$ be a prime ideal. Then $\overline P$ is a maximal ideal.
\item The maximal ideals of $\GenK$ are exactly $\overline{g(\mathcal F)}$, where $\mathcal F\in P_*(\mathcal S)$.
\end{enumerate}
\end{thm}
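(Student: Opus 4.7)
My plan is to deduce both parts of the theorem by combining the characterization of pseudoprime ideals (theorem \ref{thm_pseudoprime}) with the description of the closure of an ideal (theorem \ref{thm_closed_characterization}). The core observation is that, for a prime (hence pseudoprime) ideal $P$, the splitting ``$e_S \in P$ or $e_{\co S} \in P$'' for every $S \subseteq (0,1)$ forces $\overline P$ to swallow enough idempotents that any element outside it, together with $\overline P$, generates $1$.

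For part 1, let $P \idealproper \GenK$ be prime and take $x \in \GenK \setminus \overline P$. By theorem \ref{thm_closed_characterization}, $m(x\GenK) \not\subseteq P$, so by proposition \ref{prop_level_sets}(2) there exists $S \in \Inv(x)$ with $e_S \notin P$. Since $P$ is pseudoprime (theorem \ref{thm_pseudoprime}), the alternative $e_{\co S} \in P$ must hold. Pick $y \in \GenK$ with $xy = e_S$; then
\[
1 = e_S + e_{\co S} = xy + e_{\co S} \in x\GenK + P \subseteq x\GenK + \overline P,
\]
so $\overline P + x\GenK = \GenK$. As $x$ was arbitrary outside $\overline P$, the ideal $\overline P$ is maximal (noting that $\overline P \idealproper \GenK$ since it contains no invertible element: invertible elements form an open set, so if $1 \in \overline P$, then some element of $P$ would be invertible).

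For part 2, first suppose $\mathcal F \in P_*(\mathcal S)$. By proposition \ref{prop_minimal_primes}(1), $g(\mathcal F)$ is a (minimal) prime ideal of $\GenK$, so part 1 gives that $\overline{g(\mathcal F)}$ is maximal. Conversely, let $M$ be a maximal ideal of $\GenK$. Then $M$ is prime, so part 1 applied to $M$ itself yields that $\overline M$ is a maximal ideal containing $M$; by maximality of $M$, $M = \overline M$, i.e.\ $M$ is closed. Being maximal, $M$ is in particular pseudoprime, so by theorem \ref{thm_pseudoprime}(5) there exists $\mathcal F \in P_*(\mathcal S)$ with $m(M) = g(\mathcal F)$. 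Finally, theorem \ref{thm_closed_characterization} gives $M = \overline M = \overline{m(M)} = \overline{g(\mathcal F)}$, completing the characterization.

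The only real work is part 1; parts of part 2 are then bookkeeping using the already-proven characterizations. I expect no serious obstacle, as the main substance (the structure of $m(x\GenK)$ and $\overline I$) has already been packaged in theorems \ref{thm_closed_characterization} and \ref{thm_pseudoprime}; the only point to be careful about is the verification that $\overline P$ is proper, which is immediate from the openness of the group of units in the sharp topology.
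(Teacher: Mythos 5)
Your proof is correct and follows essentially the same route as the paper: part 1 extracts $S\in\Inv(x)$ with $e_S\notin P$ from theorem \ref{thm_closed_characterization}, uses primeness to get $e_{\co S}\in P$, and concludes $\overline P + x\GenK=\GenK$; part 2 combines this with $\overline M=\overline{m(M)}$ and theorem \ref{thm_pseudoprime}(5). The only cosmetic difference is that you obtain $M=\overline M$ by applying part 1 to $M$ itself, while the paper argues directly that $\overline M$ is proper from the openness of the set of units — both are fine.
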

\begin{proof}
(1) Let $x\in\GenK\setminus \overline P$. By theorem \ref{thm_closed_characterization}, there exists $S\in\Inv(x)$ such that $e_S\notin P$. As $P$ is prime, $e_{\co S}\in P$. As $e_S\in x\GenK$, $1= e_S + e_{\co S}\in \overline P+x\GenK$, so $\overline P +x\GenK = \GenK$. We conclude that $\overline P$ is maximal.\\
(2) Let $M$ be a maximal ideal of $\GenK$. As the set of invertible elements of $\GenK$ is open, $\overline M$ is a proper ideal, so $M=\overline M=\overline{m(M)}$ by the maximality and by theorem \ref{thm_closed_characterization}. Since $M$ is prime, $\overline{m(M)}= \overline{g(\mathcal F)}$ for some $\mathcal F\in P_*(\mathcal S)$ by theorem \ref{thm_pseudoprime}.
Conversely, if $\mathcal F\in P_*(\mathcal S)$, then $g(\mathcal F)$ is prime (cf.\ prop.~\ref{prop_minimal_primes}), so $\overline{g(\mathcal F)}$ is maximal by part~1.
\end{proof}

\begin{prop}\label{prop_intersection_of_max_ideals}
Let $I\idealproper \GenK$. Then $\overline I =\bigcap_{I\subseteq M\atop {M\text{ maximal}}}M$.\\
In particular, an ideal $I\idealproper\GenK$ is closed iff it is an intersection of maximal ideals.
\end{prop}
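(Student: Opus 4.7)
The plan is to prove the two inclusions of $\overline{I} = \bigcap_{I\subseteq M,\, M \text{ maximal}} M$ separately, then deduce the ``in particular'' statement.

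For the inclusion $\overline{I} \subseteq \bigcap M$, I note that every maximal ideal of $\GenK$ is closed: the set of units is open in the sharp topology \cite[1.2.38]{GKOS}, so $\overline M$ cannot contain $1$; by maximality $\overline M = M$. (This is also implicit in the fact that, by theorem~\ref{thm_max_ideals}(2), every maximal ideal has the form $\overline{g(\mathcal F)}$.) Hence if $I \subseteq M$, then $\overline I \subseteq \overline M = M$.

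The reverse inclusion is the content of the proposition. I prove its contrapositive: if $x \notin \overline I$, I produce a maximal ideal $M \supseteq I$ with $x \notin M$. By theorem~\ref{thm_closed_characterization}, $x \notin \overline I$ means $m(x\GenK) \not\subseteq I$, so there exists $S \in \Inv(x)$ with $e_S \notin I$. Pick $y \in \GenK$ with $xy = e_S$. The set $A = \{1, e_S\}$ is closed under multiplication (using $e_S^2 = e_S$) and disjoint from $I$ (since $I$ is proper and $e_S \notin I$). Proposition~\ref{prop_prime_constructor} then gives a prime ideal $P \supseteq I$ with $P \cap A = \emptyset$, in particular $e_S \notin P$. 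By theorem~\ref{thm_max_ideals}(1), $M := \overline P$ is a maximal ideal containing $I$. It remains to show $e_S \notin M$, for then $x \notin M$ (otherwise $e_S = xy \in M$). But $e_S$ is idempotent, so $e_S \in \overline P$ would imply $e_S \in m(\overline P) = m(P) \subseteq P$ by theorem~\ref{thm_closed_characterization}, contradicting $e_S \notin P$.

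For the ``in particular'' statement: if $I$ is an intersection of maximal ideals, it is an intersection of closed sets and hence closed. Conversely, if $I$ is closed, then $I = \overline I = \bigcap_{I \subseteq M} M$ by the first part. The only real obstacle is the second inclusion, and the key trick there is the combined use of proposition~\ref{prop_prime_constructor} (to separate $e_S$ from $I$ by a prime ideal) and theorem~\ref{thm_max_ideals}(1) (to pass from the prime to a maximal ideal without swallowing $e_S$, which is guaranteed because idempotents in the closure already lie in the original ideal via $m(\overline P) = m(P)$).
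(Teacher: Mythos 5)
Your proof is correct and follows essentially the same route as the paper: both directions use Theorem~\ref{thm_closed_characterization} to extract an $e_S$ with $S\in\Inv(x)$ and $e_S\notin I$, Proposition~\ref{prop_prime_constructor} to separate $e_S$ from $I$ by a prime $P$, and Theorem~\ref{thm_max_ideals}(1) to pass to the maximal ideal $\overline P$, with the observation that idempotents survive into the closure only if they were already in $P$. The paper compresses the last step into ``by Theorem~\ref{thm_closed_characterization}, $e_S\notin\overline P$''; your explicit justification via $m(\overline P)=m(P)$ is exactly what is meant.
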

\begin{proof}
By theorem \ref{thm_max_ideals}, maximal ideals are closed, so $\overline I\subseteq\bigcap_{I\subseteq M\atop {M\text{ maximal}}} M$.\\
Conversely, if $x\notin \overline I$, by theorem \ref{thm_closed_characterization}, there exists $S\in\Inv(x)$ such that $e_S\notin I$. By proposition \ref{prop_prime_constructor}, there exists $P\idealproper\GenK$ prime with $I\subseteq P$ and $e_S\notin P$. By theorem \ref{thm_closed_characterization}, $e_S\notin \overline P$, and, as $S\in\Inv(x)$, $x\notin \overline P$. By theorem \ref{thm_max_ideals}, $\overline P$ is maximal, so $x\notin \bigcap_{I\subseteq M\atop {M\text{ maximal}}}M$. So $\bigcap_{I\subseteq M\atop {M\text{ maximal}}} M \subseteq \overline I$.
\end{proof}

\begin{cor}
G.~Mason \cite{Mason73} calls an ideal $I$ a strong $z$-ideal iff it is an intersection of maximal ideals. Hence the closed ideals of $\GenK$ are exactly the strong $z$-ideals of $\GenK$. As every strong $z$-ideal is a $z$-ideal, every closed ideal in $\GenK$ is a $z$-ideal.
\end{cor}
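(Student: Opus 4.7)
The plan is straightforward, since this is a corollary that merely consolidates earlier results. I would split the statement into two assertions: (i) the closed ideals of $\GenK$ coincide with the strong $z$-ideals, and (ii) every closed ideal is a $z$-ideal. For (i), I would simply invoke Proposition \ref{prop_intersection_of_max_ideals}, which shows that every proper closed ideal has the form $\bigcap_{I\subseteq M,\ M\text{ maximal}} M$, i.e., is precisely an intersection of maximal ideals in the sense of Mason's definition. The case $I=\GenK$ is handled by the standard convention that the empty intersection equals the whole ring (and $\GenK$ is trivially closed).

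For (ii), I would argue that every strong $z$-ideal is a $z$-ideal in two steps. First, in any commutative ring with $1$, every maximal ideal $M$ is a $z$-ideal: if $a\in M$ and $\Max(b)=\Max(a)$, then $M\in\Max(a)=\Max(b)$, so $b\in M$. Second, by Proposition \ref{prop_z_part}(4), the intersection of any family of $z$-ideals is a $z$-ideal. Therefore any strong $z$-ideal is a $z$-ideal; combined with (i), every closed ideal of $\GenK$ is a $z$-ideal.

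There is no real obstacle here, as all the substantive work was done in Proposition \ref{prop_intersection_of_max_ideals} (identification of closed ideals with intersections of maximal ideals) and Proposition \ref{prop_z_part}(4) (closure of the class of $z$-ideals under arbitrary intersection). The corollary is purely a matter of matching definitions.
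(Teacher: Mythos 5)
Your proposal is correct and matches the paper's (implicit) argument: the identification of closed ideals with intersections of maximal ideals is exactly Proposition \ref{prop_intersection_of_max_ideals}, and the fact that strong $z$-ideals are $z$-ideals is the elementary observation you give (the paper simply cites Mason for it). Your explicit verification that maximal ideals are $z$-ideals and that $z$-ideals are closed under intersection is a harmless unpacking of that citation.
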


\begin{prop}\label{prop_countably_generated_ideals}
Let $I$ be a countably generated ideal of $\GenK$. Then $\overline I$ is the closure of a principal ideal.
\end{prop}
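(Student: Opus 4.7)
The plan is to reduce the statement to two results that are already in place: Proposition~\ref{prop_level_sets}(4), which says that for any countably generated $I$ the pure part $m(I)$ coincides with $m(\beta\GenK)$ for some principal ideal $\beta\GenK$, and Theorem~\ref{thm_closed_characterization}, which says that for every proper ideal $J$ one has $\overline{J}=\overline{m(J)}$. Chaining these two facts together immediately gives $\overline{I}=\overline{\beta\GenK}$.

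First I would dispose of the trivial case $I=\GenK$: here $I=1\cdot\GenK$ is already principal and closed, so there is nothing to prove. From now on assume $I\idealproper\GenK$. By Theorem~\ref{thm_closed_characterization}, $\overline{I}=\overline{m(I)}$. Since $I$ is countably generated, Proposition~\ref{prop_level_sets}(4) provides $\beta\in\GenK$ such that $m(I)=m(\beta\GenK)$, so $\overline{I}=\overline{m(\beta\GenK)}$.

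Next I must verify that $\beta\GenK$ is itself a proper ideal, so that Theorem~\ref{thm_closed_characterization} can be applied once more. This is automatic: if $\beta$ were invertible then $m(\beta\GenK)=\GenK$, whereas $m(\beta\GenK)=m(I)\subseteq I\subsetneq\GenK$. Hence Theorem~\ref{thm_closed_characterization} applied to $\beta\GenK$ yields $\overline{\beta\GenK}=\overline{m(\beta\GenK)}$, and combining the two equalities gives
\[
\overline{I}=\overline{m(I)}=\overline{m(\beta\GenK)}=\overline{\beta\GenK},
\]
as required.

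There is essentially no obstacle here beyond recognising that the previous two results combine in this way; all the real work (the construction of $\beta$ from a sequence of level sets via the bijection $\kappa\colon\N^2\to\N$, and the identification $\overline{J}=\overline{m(J)}$) has already been carried out. The only point worth checking carefully is the properness of $\beta\GenK$ needed to invoke Theorem~\ref{thm_closed_characterization} a second time, which as noted above follows from $m(\beta\GenK)\subsetneq\GenK$.
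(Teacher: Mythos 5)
Your proposal is correct and follows exactly the route of the paper's own (one-line) proof: combine Proposition~\ref{prop_level_sets}(4), which identifies $m(I)$ with $m(\beta\GenK)$ for some $\beta$, with Theorem~\ref{thm_closed_characterization}, which gives $\overline{J}=\overline{m(J)}$. Your extra checks on properness (of $I$ and of $\beta\GenK$) are a reasonable bit of added care that the paper leaves implicit.
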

\begin{proof}
By proposition \ref{prop_level_sets}(4) and theorem \ref{thm_closed_characterization}.
\end{proof}

\begin{prop}
For $I$, $J$ $\ideal$ $\GenK$, $\overline I + \overline J =\overline{I+J}$. In particular, the sum of two closed ideals is a closed ideal.
\end{prop}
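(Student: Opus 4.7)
The plan is to prove the two inclusions. The direction $\overline{I}+\overline{J}\subseteq\overline{I+J}$ is immediate: if $y_n\to y$ with $y_n\in I$ and $z_n\to z$ with $z_n\in J$, then $y_n+z_n\in I+J$ converges to $y+z$. The content lies in the reverse inclusion $\overline{I+J}\subseteq\overline{I}+\overline{J}$. I may assume $I+J\idealproper\GenK$, since otherwise $1\in I+J\subseteq\overline I+\overline J$ forces both sides to equal $\GenK$.

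So fix $x\in\overline{I+J}$, and by theorem~\ref{thm_closed_characterization} choose a sequence of level sets $(L_n)_{n\in\N}$ for $x$; then $e_{L_n}\in I+J$, so write $e_{L_n}=a_n+b_n$ with $a_n\in I$ and $b_n\in J$. The key construction is an increasing sequence $\emptyset=T_0\subseteq T_1\subseteq T_2\subseteq\cdots$ with $T_n\subseteq L_n$, $T_n\setminus T_{n-1}\subseteq L_n\setminus L_{n-1}$, $e_{T_n}\in I$, and $e_{L_n\setminus T_n}\in J$. At the inductive step $n$, multiply $e_{L_n}=a_n+b_n$ by $e_{L_n\setminus L_{n-1}}$ to obtain a decomposition of the unit in $R_n:=e_{L_n\setminus L_{n-1}}\GenK$. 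As $R_n$ is a homomorphic image of the exchange ring $\GenK$, it is itself a commutative exchange ring, so characterization~(5) of exchange rings yields an idempotent $e_{U_n}\in R_n$ (hence $U_n\subseteq L_n\setminus L_{n-1}$) with $e_{U_n}\in a_n\GenK\subseteq I$ and $e_{(L_n\setminus L_{n-1})\setminus U_n}\in b_n\GenK\subseteq J$. Setting $T_n:=T_{n-1}\cup U_n$, orthogonality of the supports gives $e_{T_n}=e_{T_{n-1}}+e_{U_n}\in I$ and $e_{L_n\setminus T_n}=e_{L_{n-1}\setminus T_{n-1}}+e_{(L_n\setminus L_{n-1})\setminus U_n}\in J$.

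Now define $y_n:=xe_{T_n}\in I$ and $z_n:=xe_{L_n\setminus T_n}\in J$. Then $y_{n+1}-y_n=xe_{U_{n+1}}$ with $U_{n+1}\subseteq\co L_n$, so by the level-set bound in theorem~\ref{thm_closed_characterization}, $\abs{y_{n+1}-y_n}\le\abs{x}e_{\co L_n}\le\alpha^n$; the same estimate applies to $z_{n+1}-z_n$. Hence the sharp-norms of the increments tend to zero, so both sequences are Cauchy in the complete ring $\GenK$, with limits $y\in\overline I$ and $z\in\overline J$. Since $y_n+z_n=xe_{L_n}\to x$, we obtain $x=y+z\in\overline I+\overline J$.

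The main obstacle is the inductive choice of the $T_n$. The exchange property readily decomposes each $e_{L_n}$ independently into an $I$-part and a $J$-part, but independent decompositions give no control over the differences $y_{n+1}-y_n$ and hence no convergence. The point is to perform the splitting only on the new slice $L_n\setminus L_{n-1}$, so that successive $T_n$'s differ on a set where $\abs{x}$ is already of order $\alpha^n$; this is what makes the partial sums in $I$ and $J$ separately Cauchy.
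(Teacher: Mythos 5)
Your proof is correct and follows essentially the same route as the paper: both arguments split each new level-set slice (the paper's $S_m=\{\eps:\eps^{m-N}\le\abs{x_\eps}<\eps^{m-N-1}\}$ is exactly your $L_n\setminus L_{n-1}$) into an $I$-supported and a $J$-supported idempotent, and then sum the corresponding pieces of $x$ using the bound $\abs{x}e_{\co L_n}\le\alpha^n$ and completeness of $\GenK$. The only cosmetic difference is the tool used for the slice-splitting step: the paper invokes lemma~\ref{lemma_zero-inv}(11) applied to $e_{S_m}=a_m+b_m$, whereas you invoke the exchange property of the quotient ring $e_{L_n\setminus L_{n-1}}\GenK$ --- both yield the same decomposition.
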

\begin{proof}
As $\overline I\subseteq\overline{I+J}$ and $\overline J\subseteq\overline{I+J}$, $\overline I + \overline J\subseteq\overline{I+J}$.\\
Conversely, let $x\in\overline{I+J}$. Fix a representative $(x_\eps)_\eps$ of $x$. We may suppose that there exists $N\in\N$ such that $\abs{x_\eps}<\eps^{-N}$, $\forall\eps$. Let $S_m=\{\eps\in(0,1): \eps^{m-N}\le\abs{x_\eps}<\eps^{m-N-1}\}$ for each $m\in\N$. By theorem~\ref{thm_closed_characterization}, $e_{S_m}\in I+J$, for each $m$. Let $e_{S_m}=a_m+b_m$, $a_m\in I$, $b_m\in J$. By lemma~\ref{lemma_zero-inv}(11), there exist $T_m\subseteq S_m$ such that 
$e_{T_m}\in a_m\GenK\subseteq I$ and $e_{S_m\setminus T_m}\in b_m\GenK\subseteq J$.\\
We show that $\sum_{n\in\N} xe_{T_n}$ is a Cauchy sequence in $\GenK$, hence convergent.\\
With $L_m=\{\eps\in(0,1):\abs{x_\eps}\ge\eps^m\}$,
\[\abs[\Big]{\sum_{j=n+1}^{n+m}x e_{T_j}}\le\abs{x}\sum_{j=n+1}^{n+m}e_{S_j}
\le\abs{x}e_{\co L_{n-N}}\le\alpha^{n-N},\]
so $\sum_{n\in\N} xe_{T_n}=y$, for some $y\in\overline I$. Similarly, $\sum_{n\in\N} xe_{S_n\setminus T_n}=z$, for some $z\in\overline J$. So $x=\sum_{n\in\N} x e_{S_n}=y+z\in\overline I +\overline J$.
\end{proof}

\begin{thm}\label{thm_closed_fin_gen}
Let $I\ideal\GenK$ be a finitely generated (hence principal, by lemma~\ref{lemma_principal_ideals}) ideal.
\begin{enumerate}
\item The following are equivalent:
\begin{enumerate}
\item $I$ is closed
\item $I$ is a $z$-ideal
\item $I$ is radical
\item $I$ is pure
\item $(\exists S\subseteq (0,1))(I=e_S\GenK)$
\end{enumerate}
\item $\zclosure I=\overline I$
\item $m(I)=\zpart I$.
\end{enumerate}
\end{thm}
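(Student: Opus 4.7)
Set $I=a\GenK$.

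\textbf{(1).} I would prove the cycle $(e)\Rightarrow(d)\Rightarrow(a)\Rightarrow(b)\Rightarrow(c)\Rightarrow(e)$. The implications $(e)\Rightarrow(d)$ (since idempotent-generated ideals are pure), $(a)\Rightarrow(b)$ (corollary following Prop.~\ref{prop_intersection_of_max_ideals}), and $(b)\Rightarrow(c)$ (Prop.~\ref{prop_z-closure}(2)) are immediate. For $(d)\Rightarrow(a)$: purity gives $a=ae$ with $e=ab\in a\GenK$, whence $e^2=aeb=ab=e$, so $e=e_S$ for some $S\subseteq(0,1)$ and $I=e_S\GenK=\Ann(e_{\co S})$ is closed as the kernel of the continuous map $x\mapsto xe_{\co S}$. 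For $(c)\Rightarrow(e)$: since $a\GenK=|a|\GenK$ by norm convexity, I may assume $a\ge 0$ in $\GenR$; Lemma~\ref{lemma_ideal_calc}(3) then yields $\sqrt a\in\rad I=I$, so $\sqrt a=ab$ and $a=a^2b^2$. Setting $e:=ab^2$, one has $ae=a$ and $e^2=aeb^2=ab^2=e$, so $I=e\GenK=e_S\GenK$.

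\textbf{(2).} $\zclosure I\subseteq\overline I$ because $\overline I$ is a $z$-ideal (corollary after Prop.~\ref{prop_intersection_of_max_ideals}) containing $I$. For the reverse, by Prop.~\ref{prop_z-closure}(1) it suffices to prove $Z(a)\subseteq Z(y)$ for each $y\in\overline{a\GenK}$. Fix a sequence of level sets $(L_n(y))$; by Thm.~\ref{thm_closed_characterization}, each $e_{L_n(y)}\in a\GenK$, say $e_{L_n(y)}=ac_n$. For $T\in Z(a)$, $e_{L_n(y)}e_T=(ae_T)c_n=0$, so $ye_Te_{L_n(y)}=0$ and $ye_T=ye_Te_{\co L_n(y)}$ satisfies $|ye_T|\le\alpha^n$. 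Letting $n\to\infty$ in the sharp topology yields $ye_T=0$, so $T\in Z(y)$.

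\textbf{(3).} $m(I)\subseteq\zpart I$ is Prop.~\ref{prop_z_part}(5). Using (2), $\zpart(a\GenK)=\{x:\overline{x\GenK}\subseteq a\GenK\}$. For $x$ in this set with level sets $(L_n(x))$, each $e_{L_n(x)}\in m(x\GenK)\subseteq\overline{x\GenK}\subseteq a\GenK$, so $L_n(x)\cap(0,\eta_n)\subseteq L_{M_n}(a)$ for a minimal choice of $M_n\in\N$, which is non-decreasing in $n$. \emph{The crucial step is that $M:=\sup_n M_n<\infty$.} Once established, for every $n$ the inclusion $L_n(x)\cap(0,\eta_n)\subseteq L_M(a)$ gives $|x_\eps|<\eps^n$ on $\co L_M(a)\cap(0,\eta_n)$, so $xe_{\co L_M(a)}$ is null and $x=xe_{L_M(a)}\in m(a\GenK)$, since $e_{L_M(a)}\in m(a\GenK)$.

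I would prove the boundedness of $M_n$ by contradiction, constructing $y\in\overline{x\GenK}\setminus a\GenK$ when $M_n\to\infty$. Take $h(n):=\lceil M_n/2\rceil$ (non-decreasing, $h\to\infty$, $M_n-h(n)\to\infty$), and for $\eps\in\bigcup_n L_n(x)$ set $n(\eps):=\min\{n:\eps\in L_n(x)\}$; define $y$ by the representative $y_\eps=\eps^{h(n(\eps))}$, extended by $0$ elsewhere. Then $y$ is moderate and $L_m(y)=L_{\phi(m)}(x)$ with $\phi(m):=\max\{n:h(n)\le m\}<\infty$ and $\phi(m)\to\infty$, so $\Inv(y)\subseteq\Inv(x)$ and $y\in\overline{x\GenK}$ by Thm.~\ref{thm_closed_characterization}. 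By minimality of $M_n$ one can pick witnesses $\eps_n\in L_n(x)\cap(0,1/n)\setminus L_{M_n-1}(a)$, so $\eps_n\to 0$ and $|a_{\eps_n}|<\eps_n^{M_n-1}$. At these points $|y_{\eps_n}/a_{\eps_n}|>\eps_n^{h(n(\eps_n))-M_n+1}\ge\eps_n^{h(n)-M_n+1}$ (using monotonicity of $h$ and $n(\eps_n)\le n$). Since the exponent $h(n)-M_n+1\to-\infty$, this violates the moderateness of $y/a$ required by $y\in a\GenK$, giving the desired contradiction.
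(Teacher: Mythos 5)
Parts (1) and (2) of your proposal are correct and essentially coincide with the paper's argument: the equivalences in (1) are closed by extracting an idempotent generator from radicality/purity exactly as the paper does (the paper uses $I=I^2$ where you use $\sqrt{\abs a}\in I$, a cosmetic difference), and (2) rests in both cases on Theorem~\ref{thm_closed_characterization}. In part (3) your overall strategy --- reduce everything to the boundedness of the minimal indices $M_n$, and when $M_n\to\infty$ construct $y\in\overline{x\GenK}\setminus a\GenK$ --- is sound and close in spirit to the paper's construction. But the final contradiction has a genuine gap. Membership $y\in a\GenK$ only means $y=ac$ in $\GenK$, i.e.\ $y_\eps=a_\eps c_\eps+n_\eps$ with $(c_\eps)_\eps$ moderate and $(n_\eps)_\eps$ negligible; there is no well-defined net $y_\eps/a_\eps$ (indeed $a_{\eps_n}$ may vanish), and any pointwise lower bound on $\abs{y_{\eps_n}}$ must beat the negligible term $n_{\eps_n}$ as well as $a_{\eps_n}c_{\eps_n}$. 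You only control $n(\eps_n)\le n$, so $h(n(\eps_n))$ may tend to infinity (e.g.\ if near $0$ all of $L_n(x)\setminus L_{M_n-1}(a)$ lies in $L_n(x)\setminus L_{n-1}(x)$, then $n(\eps_n)=n$). In that case $\abs{y_{\eps_n}}=\eps_n^{h(n(\eps_n))}$ decays faster than every fixed power of $\eps_n$, and the inequality $\abs{y_{\eps_n}}\le\abs{a_{\eps_n}}\abs{c_{\eps_n}}+\abs{n_{\eps_n}}$ can be satisfied with the whole right-hand side carried by the negligible term. So no contradiction is reached as written.

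The construction itself is salvageable; the repair is to argue with sets instead of isolated points. For each large $n$, minimality of $M_n$ gives $0\in\overline{W_n}$ for $W_n:=L_n(x)\setminus L_{M_n-1}(a)$, and on $W_n$ your representative satisfies $y_\eps\ge\eps^{h(n)}$ (since $n(\eps)\le n$ and $h$ is non-decreasing) while $\abs{a_\eps}<\eps^{M_n-1}$. Hence, as inequalities between generalized numbers (which are immune to negligible perturbations),
\[
\alpha^{h(n)}e_{W_n}\le\abs{y}e_{W_n}\le\alpha^{-N}\abs{a}e_{W_n}\le\alpha^{M_n-1-N}e_{W_n},
\]
which forces $e_{W_n}=0$ as soon as $h(n)<M_n-1-N$ --- a contradiction already for a single sufficiently large $n$. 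This is precisely the mechanism of the paper's proof of part (3): there one takes $T_n\subseteq\co{L_n}$ with $T_n\in\Inv(x)$, $e_{T_n}\ne 0$, puts $y=\alpha^{n/2}$ on the new part of $T_n$, and compares $\abs{y}e_{T_n}\ge\alpha^{n/2}e_{T_n}$ with $\abs{a}e_{T_n}\le\alpha^{n}e_{T_n}$, so that each comparison involves a fixed exponent on a set accumulating at $0$ rather than a pointwise estimate with unbounded exponents.
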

\begin{proof}
$(1)$: $(a)\implies (b)$: by the corollary to proposition~\ref{prop_intersection_of_max_ideals}.\\
$(b)\implies (c)$: by proposition~\ref{prop_z-closure}.\\
$(c)\implies (e)$: as $I$ is principal, $I=a\GenK$ for some $a\in\GenK$. By proposition \ref{prop_radical}, $I$ is idempotent, hence $a = a^2 b$, for some $b\in\GenK$. So $ab=(ab)^2$; this implies that $ab=e_S$, for some $S\subseteq(0,1)$ \cite{AJOS2006}. Further, $a\in ab\GenK$ and $ab\in a\GenK$, so $I=e_S\GenK$.\\
$(e)\implies (a)$: if $x\in \overline I$, then $x=\lim_n x_n e_S$, so $x e_S =\lim_n x_n e_S= x$, and $x\in I$.\\
$(e)\implies (d)$: since any ideal generated by idempotents is pure.\\
$(d)\implies (b)$: by proposition \ref{prop_z_part}.\\
$(2)$: let $I=a\GenK$ and $x\in\overline{I}$. By theorem~\ref{thm_closed_characterization}, $\Inv(x)\subseteq\Inv(a)$, so $x\in \zclosure{(a\GenK)}$. The converse inclusion holds generally (theorem~\ref{thm_closed_characterization}).\\
$(3)$: Let $I=a\GenK$ and let $x\in\GenK \setminus m(I)$. Let $(L_n)_{n\in\N}$ be a sequence of level sets for $a$.
As each $e_{L_n}\in m(I)$, $x\ne x e_{L_n}$, i.e., $x e_{\co L_n}\ne 0$ for each $n\in\N$. Hence there exist $T_n\subseteq \co L_n$ with $e_{T_n}\ne 0$ such that $T_n\in\Inv(x)$. Fix a representative $(x_\eps)_\eps$ of $x$ and let
\[
y_\eps=
\begin{cases}
\eps^{n/2}, &\eps\in T_n\setminus \bigcup_{m< n} T_m,\quad n\in\N\\
x_\eps, &\text{otherwise}.
\end{cases}
\]
As $(y_\eps)_\eps$ is moderate, it represents some $y\in\GenK$. We show that $y\in \overline{x\GenK}$.\\
Let $(K_n)_{n\in\N}$ be a sequence of level sets for $x$ and let $m\in\N$. As $\lim_n x e_{\co K_n}=0$, we can find $N\in\N$ such that $\abs{x}e_{\co K_n}\le\alpha^m$, $\forall n\ge N$. As $T_n\in\Inv(x)$, $\forall n$, we can find by lemma \ref{prop_level_sets}, $M\in\N$ such that $e_{T_n}= e_{T_n} e_{K_M}$, forall $n\le m$. Let $U=\bigcup T_n$. Then
\[e_{\co K_M}e_U\le e_{\co K_M}(e_{T_1}+e_{T_2}+\cdots + e_{T_m} + e_{U\setminus(T_1\cup\cdots\cup T_m)})= e_{\co K_M} e_{U\setminus(T_1\cup\cdots\cup T_m)}.\]
Hence $\abs{y}e_{\co K_n}=\abs{y}e_{\co K_n} e_U + \abs{y}e_{\co K_n}e_{\co U}\le\alpha^{m/2} e_U+ \alpha^m e_{\co U}\le \alpha^{m/2}$, as soon as $n\ge M$, $n\ge N$.\\
Hence $y=\lim_n y e_{K_n}\in \overline{x\GenK}=\zclosure{(x\GenK)}$ by part~2. Should $x\in \zpart I$, then $y\in \zpart I\subseteq I$, so $\abs{y}\le \alpha^{-N}\abs a$, for some $N\in\N$. But $\abs{a}e_{T_n}\le \alpha^n e_{T_n}$, and $\abs{y}e_{T_n}\ge\alpha^{n/2}e_{T_n}$, $\forall n\in\N$, a contradiction. So $x\notin \zpart I$. The converse inclusion holds generally (proposition \ref{prop_z_part}).
\end{proof}
\begin{cor}
If $I\ideal\GenK$ is not closed, then $\overline{I}$ is not principal (hence not finitely generated by lemma \ref{lemma_principal_ideals}).
\end{cor}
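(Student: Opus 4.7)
The plan is to argue by contradiction: assume $\overline{I}$ is principal and deduce that $I$ must already equal $\overline{I}$, contradicting the hypothesis that $I$ is not closed. The finite generation claim is then automatic from lemma~\ref{lemma_principal_ideals}, which asserts that every finitely generated ideal in $\GenK$ is principal.

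First, I would apply theorem~\ref{thm_closed_fin_gen}(1) to the ideal $\overline{I}$, which is finitely generated by assumption and closed because topological closures are closed. By the equivalence $(a)\Leftrightarrow (e)$ in that theorem, there exists $S\subseteq(0,1)$ with $\overline{I}=e_S\GenK$. In particular, $e_S$ is an idempotent lying in $\overline{I}$.

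Next, I would invoke the identity $m(\overline{I})=m(I)$ established in theorem~\ref{thm_closed_characterization}. Since $e_S$ is idempotent and $e_S\in\overline{I}$, we have $e_S\in m(\overline{I})=m(I)\subseteq I$. But then $\overline{I}=e_S\GenK\subseteq I$, forcing $I=\overline{I}$ and contradicting the assumption that $I$ is not closed.

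There is no real obstacle here: the corollary is a direct packaging of theorems~\ref{thm_closed_fin_gen} and~\ref{thm_closed_characterization}, together with the Bezout property (lemma~\ref{lemma_principal_ideals}) that bridges ``finitely generated'' and ``principal''. The only subtle point worth stating explicitly is why $\overline{I}$ being principal already triggers the chain of equivalences in theorem~\ref{thm_closed_fin_gen}(1) — namely, because $\overline{I}$ is itself closed, hypothesis $(a)$ of that equivalence is for free, so one immediately extracts the form $e_S\GenK$ and the contradiction follows in one line.
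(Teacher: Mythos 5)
Your proof is correct and follows essentially the same route as the paper: both extract $\overline{I}=e_S\GenK$ from theorem~\ref{thm_closed_fin_gen} and then use $m(\overline{I})=m(I)$ from theorem~\ref{thm_closed_characterization} to force $e_S\in I$ and hence $I=\overline{I}$. No gaps.
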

\begin{proof}
Suppose $\overline{I}$ is principal. By theorem \ref{thm_closed_fin_gen}, $\overline{I}=e_S\GenK$, for some $S\subseteq(0,1)$. By theorem \ref{thm_closed_characterization}, $m(I)=e_S\GenK$. Hence $e_S\GenK\subseteq I\subseteq e_S\GenK$, and $I$ would be closed, a contradiction.
\end{proof}

\begin{rem}
As shown in \cite{AJOS2006}, not every principal ideal of $\GenK$ is generated by an idempotent. E.g., consider $\beta \GenK$, where $\beta$ is as in example \ref{ex_z-prime} below.
\end{rem}

A generator of an ideal satisfying the equivalent conditions of theorem \ref{thm_closed_fin_gen} can be described more explicitly (compare also with \cite[Lemma~4.23]{AJ2001}):
\begin{prop}\label{prop_edged_generator}
Let $a\in\GenK\setminus\{0\}$ and $(L_n)_{n\in\N}$ a sequence of level sets for $a$.\\
Then the following are equivalent:
\begin{enumerate}
\item $(\exists S\subseteq (0,1))$ $(a\GenK=e_S\GenK)$
\item $(\exists S\subseteq (0,1))$ $(S\in\Inv(a)\,\&\, \co S\in Z(a))$
\item $(L_n)_{n\in\N}$ is stationary, i.e., $(\exists N\in\N)(\forall n\ge N)(e_{L_n}=e_{L_N})$, or equivalently, $(\exists N\in\N)(\forall n\ge N)(\exists \eta>0)(L_n\cap (0,\eta) = L_N\cap (0,\eta))$.
\end{enumerate}
\end{prop}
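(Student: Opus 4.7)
The plan is to establish $(1)\Leftrightarrow (2)$ directly from the definitions, and then close the loop $(2)\Leftrightarrow (3)$ via proposition~\ref{prop_level_sets}(1), which reads $\Inv(a)$ off the level sets, and lemma~\ref{lemma_zero-inv}(3a), which reads $Z(a)$ off the representative $(a_\eps)_\eps$.

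For $(1)\Leftrightarrow (2)$: if $a\GenK=e_S\GenK$, then $e_S\in a\GenK$ witnesses $S\in\Inv(a)$, and $a\in e_S\GenK$ yields $a=ae_S$, equivalently $ae_{\co S}=0$, i.e.\ $\co S\in Z(a)$. Conversely, $S\in\Inv(a)$ gives $e_S\GenK\subseteq a\GenK$, and $\co S\in Z(a)$ gives $a=ae_S\in e_S\GenK$, so $a\GenK\subseteq e_S\GenK$.

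For $(3)\Rightarrow (2)$: take $S=L_N$. Then by proposition~\ref{prop_level_sets}(1) (with $m=N$, any $\eta$), $S\in\Inv(a)$. Since $a=\lim_n ae_{L_n}$ and $ae_{L_n}=ae_{L_N}$ for all $n\ge N$ by stationarity, one obtains $a=ae_S$, hence $ae_{\co S}=0$ and $\co S\in Z(a)$; note also that $e_S=e_{L_N}\ne 0$, for otherwise $a=ae_{L_N}=0$, contradicting $a\ne 0$.

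For $(2)\Rightarrow (3)$ (the main step): by proposition~\ref{prop_level_sets}(1) pick $m\in\N$ and $\eta_0>0$ with $S\cap(0,\eta_0)\subseteq L_m$, and by lemma~\ref{lemma_zero-inv}(3a) pick for every $k\in\N$ some $\eta_k>0$ with $\abs{a_\eps}\le\eps^k$ for $\eps\in\co S\cap (0,\eta_k)$. Fix $n\ge m$. Since $\eps^n\le\eps^m$ for $\eps\in(0,1)$, clearly $L_m\subseteq L_n$. Conversely, for $\eps\in L_n\cap(0,\min(\eta_0,\eta_{n+1}))$, one has $\abs{a_\eps}\ge\eps^n>\eps^{n+1}$, which excludes $\eps\in\co S$; hence $\eps\in S\cap(0,\eta_0)\subseteq L_m$. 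Thus $L_n$ and $L_m$ coincide on a neighbourhood of $0$, so $e_{L_n}=e_{L_m}$, and the sequence $(L_n)_n$ is stationary from index $m$. The only delicate point is coupling the $\Inv$-side with the $Z$-side: one must use both chosen thresholds $\eta_0,\eta_{n+1}$ simultaneously and exploit the strict exponent gap $\eps^n>\eps^{n+1}$ to force $L_n\setminus L_m$ to be empty near $0$.
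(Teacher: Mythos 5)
Your proof is correct and follows essentially the same route as the paper: the equivalence $(1)\Leftrightarrow(2)$ via lemma~\ref{lemma_zero-inv}(8), and the key step $(2)\Rightarrow(3)$ by combining proposition~\ref{prop_level_sets}(1) (giving $S\cap(0,\eta_0)\subseteq L_m$) with lemma~\ref{lemma_zero-inv}(3a) (forcing $L_n$ back into $S$ near $0$), exactly as in the paper's argument that $e_{L_n}=e_S$ for large $n$. The only difference is organizational (you prove $(1)\Leftrightarrow(2)$ and $(2)\Leftrightarrow(3)$ separately rather than cycling $(1)\Rightarrow(2)\Rightarrow(3)\Rightarrow(1)$), and you correctly handle the non-vanishing of $e_{L_N}$ needed to invoke $L_N\in\Inv(a)$.
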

\begin{proof}
$(1)\implies (2)$: by lemma \ref{lemma_zero-inv}(8).\\
$(2)\implies (3)$: as $S\in\Inv(a)$, by proposition \ref{prop_level_sets}, $S\cap (0,\eta)\subseteq {L_N}$, for some $N\in\N$ and $\eta\in(0,1)$, hence also $S\cap (0,\eta)\subseteq {L_n}$ for each $n\ge N$. As $\co S\in Z(a)$, by lemma \ref{lemma_zero-inv}(3), for each $n\ge N$, $\co S \cap(0,\eta_n)\subseteq {\co L_n}$, for some $\eta_n\in (0,1)$. So $L_n\subseteq S\cup [\eta_n,1)$ and $e_S\le e_{L_n}\le e_S$, so $e_{L_n}=e_S$, $\forall n\ge N$.\\
$(3)\implies(1)$: as $a=\lim_n a e_{L_n}$, $a=a e_{L_N}$, so $a\in e_{L_N}\GenK$. As $L_N\in\Inv(a)$, also $e_{L_N}\in a\GenK$. Hence $a\GenK= e_{L_N}\GenK$. 
\end{proof}

\section{Prime ideals, ultrafilters, nonstandard analysis}
\begin{lemma}
Let $\mathcal I_0:=\{(\eta,1): \eta\in (0,1)\}$.\\
(1) Let $\mathcal F\in P_*(\mathcal S)$. Then $\mathcal I:=\{S\subseteq(0,1): e_S\in g(\mathcal F)\}=\mathcal F \cup\{S\subseteq (0,1): 0\notin \overline S\}$ is a maximal cofilter on $(0,1)$ that contains $\mathcal I_0$.\\
(2) Conversely, if $\mathcal I$ is a maximal cofilter on $(0,1)$ containing $\mathcal I_0$, then $\mathcal F:=\mathcal I\cap \mathcal S\in P_*(\mathcal S)$.
\end{lemma}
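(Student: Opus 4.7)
The plan is to handle (1) by first proving the set-theoretic identity $\{S : e_S \in g(\mathcal F)\} = \mathcal F \cup \{S : 0 \notin \overline S\}$, and then reading the cofilter axioms directly off the ideal structure of $g(\mathcal F)$. For (2), the heart of the argument is showing that $\mathcal F = \mathcal I \cap \mathcal S$ is closed under finite unions, and this is exactly where the hypothesis $\mathcal I_0 \subseteq \mathcal I$ will enter.

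For the identity in (1), the inclusion $\supseteq$ is immediate: $e_S = 0$ whenever $0 \notin \overline S$, and $e_S \in g(\mathcal F)$ whenever $S \in \mathcal F$ by definition. For the non-trivial direction I would write $e_S = \sum_{i=1}^n x_i e_{T_i}$ with $T_i \in \mathcal F$, set $T = \bigcup_i T_i \in \mathcal F$ (closure under finite unions), and multiply by $e_{\co T}$ to get $e_S e_{\co T} = 0$, whence $e_S = e_{S \cap T}$. Then split cases: if $e_S = 0$, then $S$ lies in the right-hand side; if $e_S \neq 0$, I would argue $0 \in \overline{\co S}$ (otherwise $e_S = 1 \in g(\mathcal F)$, contradicting that $g(\mathcal F)$ is a proper minimal prime by proposition~\ref{prop_minimal_primes}), so $S \in \mathcal S$, and finally $S \in \mathcal F$ (else the $P_*$ condition forces $\co S \in \mathcal F$, giving $1 = e_S + e_{\co S} \in g(\mathcal F)$, the same contradiction). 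The cofilter axioms then drop out: closure under finite unions from $e_{S \cup T} = e_S + e_T - e_S e_T$, closure under subsets from $e_{S'} = e_{S'} e_S$ for $S' \subseteq S$, and properness from $1 \notin g(\mathcal F)$. Maximality is obtained by case-splitting on whether $S \in \mathcal S$: if so, $P_*$ places $S$ or $\co S$ in $\mathcal F \subseteq \mathcal I$; if not, one of $S, \co S$ already lies in $\{X : 0 \notin \overline X\}$. Membership of $\mathcal I_0$ is direct from the second clause.

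For (2), I expect the delicate step to be verifying that $\mathcal F$ is closed under finite unions: if $S, T \in \mathcal I \cap \mathcal S$, then $S \cup T \in \mathcal I$ is automatic, but $S \cup T \in \mathcal S$ requires $0 \in \overline{\co(S \cup T)}$. Suppose this fails; since $0 \in \overline S \subseteq \overline{S \cup T}$, we would have $(0,\eta) \subseteq S \cup T$ for some $\eta \in (0,1)$. Combined with $(\eta,1) \in \mathcal I_0 \subseteq \mathcal I$, closure of $\mathcal I$ under finite unions and subsets yields $(0,1) \setminus \{\eta\} \in \mathcal I$. Passing to the dual ultrafilter $\mathcal U = \{X : \co X \in \mathcal I\}$ on $(0,1)$, we would get $\{\eta\} \in \mathcal U$ and $(0,\eta/2] \in \mathcal U$ (from $(\eta/2,1) \in \mathcal I_0$), but their intersection is empty, contradicting that $\mathcal U$ is a filter. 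Hence $S \cup T \in \mathcal S$, so $S \cup T \in \mathcal F$. The remaining $P_*(\mathcal S)$ axiom is then trivial from maximality of $\mathcal I$: for $S \in \mathcal S$, at least one of $S, \co S$ lies in $\mathcal I$, and hence in $\mathcal F$. Conceptually, the role of $\mathcal I_0 \subseteq \mathcal I$ is precisely to prevent $\mathcal U$ from being principal at some interior point of $(0,1)$, which is what rules out the degenerate configuration above; this is the one subtle obstacle, while everything else is bookkeeping.
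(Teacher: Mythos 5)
Your proposal is correct and follows essentially the same route as the paper: part (1) reduces to the properness and primality (via the $P_*$ dichotomy) of $g(\mathcal F)$, and part (2) hinges on exactly the same use of $\mathcal I_0$ to rule out $(0,\eta)\subseteq S\cup T$ — the paper simply notes $S\cup T\cup(\eta',1)=(0,1)\in\mathcal I$ directly, whereas your detour through the dual ultrafilter is an equivalent reformulation of the same contradiction. The extra step $e_S=e_{S\cap T}$ in your proof of the set identity is superfluous but harmless.
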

\begin{proof}
(1) From the fact that $g(\mathcal F)$ is a proper ideal, it follows that $\mathcal I$ is a cofilter. From the fact that $g(\mathcal F)$ is prime, it follows that $\mathcal I$ is maximal. (Cf.\ \cite[Lemma~4.17, Thm.~4.19]{AJ2001}.)\\
(2) If $S$, $T$ $\in \mathcal F$, then $S\cup T\in \mathcal I$ and $0\in \overline S\subseteq \overline{S\cup T}$. Should $0\notin \overline{\co{(S\cup T)}}$, then there would exist $\eta > 0$ such that $(0,1)= S\cup T \cup (\eta,1) \in \mathcal I$, a contradiction. So $S\cup T\in \mathcal F$.\\
If $S\in \mathcal S$, then also $\co S\in \mathcal S$. So $S\in\mathcal F$ or $\co S\in\mathcal F$ by maximality of $\mathcal I$.
\end{proof}
Consequently, if $P\idealproper \GenK$ is prime, then $\mathcal U:=\{S\subseteq(0,1): e_{\co S}\in P\}$ is an ultrafilter on $(0,1)$ that contains $\{(0,\eta): \eta\in (0,1)\}$.

We recall the definition of the field ${}^\rho\K$ of nonstandard asymptotic numbers \cite{Lightstone, Todorov99, Todorov2004}. Let $\ster\K$ be a fixed nonstandard extension of $\K$. I.e., let $\mathcal U$ be an ultrafilter on an infinite index set $I$ (in this paper, we will be mainly considering the case $I=(0,1)$). Then $\ster\K=\K^I/\Null_{\mathcal U}$, where
\[\Null_{\mathcal U}=\{(x_\eps)_\eps\in \K^I: (\exists S\in\mathcal U)(\forall\eps\in S)(x_\eps=0)\}.\]
For $x$, $y$ $\in\ster\K$, $x\le y$ if and only if $\{\eps\in I: x_\eps\le y_\eps\}\in\mathcal U$, or equivalently, if and only if there exist representatives $(x_\eps)_\eps$, $(y_\eps)_\eps$ such that $x_\eps\le y_\eps$, $\forall\eps\in I$.\\
Let $\rho\in\ster\K$ be a fixed positive infinitesimal ($\ne 0$). Then ${}^\rho\K={\mathcal M}_\rho(\K)/\Null_\rho(\K)$, where
\[{\mathcal M}_\rho(\K)=\{x\in\ster\K: (\exists N\in\N)(\abs{x}\le \rho^{-N})\}\]
and
\[\Null_\rho(\K)=\{x\in\ster\K: (\forall n\in\N)(\abs{x}\le \rho^{n})\}.\]

\begin{thm}\label{thm_quotient_with_max_is_nonstandard_field}
Let $M$ be a maximal ideal of $\GenK$. Let $\mathcal U=\{S\subseteq(0,1): e_{\co S}\in M\}$. Consider the nonstandard field $\ster{\K}$ constructed by means of the ultrafilter $\mathcal U$. Let $\rho\in\ster{\K}$ be the element with representative $(\eps)_\eps$. Then $\rho$ is a positive infinitesimal, so we can consider ${}^\rho\K$. Then there exists a canonical isomorphism between $\GenK/M$ and ${}^\rho\K$. On representatives in $\K^{(0,1)}$, the isomorphism is given by the identity map.\\
In particular, the algebraic, order and topological structure coincide.
\end{thm}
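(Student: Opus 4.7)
The plan is to define the isomorphism on representatives and verify it descends with the required structure. Since $e_{[\eta,1)} = 0 \in M$ for every $\eta \in (0,1)$, the set $(0,\eta)$ belongs to $\mathcal U$; this shows at once that $\rho = [(\eps)_\eps] \in \ster\K$ is a positive infinitesimal and that $\Mod(\K) \subseteq \mathcal M_\rho(\K)$ and $\Null(\K) \subseteq \Null_\rho(\K)$. Composing the two quotient maps therefore yields a ring homomorphism $\phi : \GenK \to {}^\rho\K$ acting as the identity on representatives.

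Next I would show $\ker \phi = M$. Given $x \in \GenK$ with a sequence of level sets $(L_n)_{n \in \N}$, the equation $\phi(x) = 0$ unfolds to: for every $n$ there is some $T \in \mathcal U$ with $\abs{x_\eps} \le \eps^n$ on $T$. Since any witness $T_{n+1} \in \mathcal U$ lies inside $\co L_n$ (as $\eps^{n+1} < \eps^n$ on $(0,1)$), this is equivalent to $\co L_n \in \mathcal U$, i.e.\ $e_{L_n} \in M$, for every $n$. By Theorem~\ref{thm_closed_characterization} this is exactly $x \in \overline M$, and by Theorem~\ref{thm_max_ideals} $\overline M = M$. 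Hence $\phi$ factors through an injective ring homomorphism $\tilde\phi : \GenK/M \hookrightarrow {}^\rho\K$. Surjectivity follows by truncation: given $(y_\eps)_\eps \in \mathcal M_\rho(\K)$ with $\abs{y_\eps} \le \eps^{-N}$ on some $S \in \mathcal U$, setting $y_\eps$ to $0$ wherever $\abs{y_\eps} > \eps^{-N}$ produces an element of $\Mod(\K)$ representing the same class in ${}^\rho\K$.

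For the order on $\GenR/M$, using that $M$ is an $l$-ideal I would reduce $[x]_M \ge 0$ to $x^- \in M$, and then apply the kernel computation to $x^-$ to translate this into $\{\eps : x_\eps \ge -\eps^n\} \in \mathcal U$ for every $n$. On the ${}^\rho\R$ side, taking $z_\eps := \min(x_\eps, 0)$ one checks that $[x]_\rho \ge 0$ is equivalent to $z \in \Null_\rho(\R)$, which unfolds to the same $\mathcal U$-condition. For the topology the same strategy works at the level of valuations: the $\rho$-valuation of $[x]_\rho$ is $\sup\{a \in \R : \{\eps : \abs{x_\eps} \le \eps^a\} \in \mathcal U\}$, and given any $a$ strictly below this supremum with witness $T \in \mathcal U$, the element $m := -x \cdot e_{\co T}$ belongs to $M$ (since $e_{\co T} \in M$) and satisfies $v(x + m) \ge a$. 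Combined with the trivial reverse inequality, this shows that the quotient sharp norm on $\GenK/M$ equals the $\rho$-norm on ${}^\rho\K$, so $\tilde\phi$ is an isometry.

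The main obstacle I anticipate is the identification $\ker \phi = M$: this is where the ideal-theoretic description of $M$ (through the idempotents $e_{L_n}$ and the closed-ideal structure theorem) has to be matched against the ultrafilter-based negligibility used in defining ${}^\rho\K$. Once that bridge is built, the algebraic, order and topological statements all reduce to routine translations of conditions on representatives into membership in $\mathcal U$.
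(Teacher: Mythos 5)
Your proposal is correct and follows essentially the same route as the paper: define the map on representatives, identify its kernel with $M$ via the level-set/closed-ideal characterization of $\overline M=M$ (Theorems \ref{thm_closed_characterization} and \ref{thm_max_ideals}), obtain surjectivity by truncating a $\rho$-moderate representative on a set of $\mathcal U$, and transfer order and valuation by matching the representative-wise descriptions on both sides. Your treatment of the order (via $x^-$) and of the valuation (via $m=-xe_{\co T}$) just spells out details the paper leaves terse.
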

\begin{proof}
By the previous lemma, $\mathcal U$ is an ultrafilter and for each $n\in\N$, $\{\eps\in(0,1): \eps\le 1/n\}\in\mathcal U$, so $\rho$ is infinitesimal.\\
Let $\phi$: $\GenK/M\to {}^\rho\K$ be defined as follows: if $(x_\eps)_\eps\in\K^{(0,1)}$ is a moderate net representing $x\in\GenK$, then $\phi(x+M)= x' + \Null_\rho(\K)$, where $x'\in\ster\K$ is the element with representative $(x_\eps)_\eps$.\\
First, as $(x_\eps)_\eps$ is moderate, it follows that $x'\in{\mathcal M}_\rho(\K)$.
Further, by theorem~\ref{thm_closed_characterization} and the fact that $M$ is closed, $x\in M$ iff
\[(\forall n\in\N)(\exists S\subseteq (0,1))(e_S\in M \,\&\, (\forall\eps\in\co S)(\abs{x_\eps}\le\eps^n))\]
iff $x'\in\Null_\rho(\K)$, by the definition of $\mathcal U$. This shows that $\phi$ is well-defined and injective. To show that $\phi$ is surjective, let $x'\in{\mathcal M}_\rho(\K)$ arbitrarily with representative $(x_\eps)_\eps\in\K^{(0,1)}$. So there exists $N\in\N$ and $S\in\mathcal U$ such that $\abs{x_\eps}\le\eps^{-N}$, for each $\eps\in S$. Let $y_\eps=\begin{cases}x_\eps, &\eps\in S\\0,&\eps\in\co S.\end{cases}$ Then also $(y_\eps)_\eps$ is a representative of $x'$ and $(y_\eps)_\eps$ is a moderate net.\\
Since the algebraic operations are in both cases defined on representatives, $\phi$ is an algebraic isomorphism. Similarly, in both cases, $x\le y$ iff there exist representatives in $\K^{(0,1)}$ for which the inequality holds componentwise, so $\phi$ is an order isomorphism. Further, the valuation $v(x)$ which determines the topology can in both cases be defined as $\sup\{a\in\R: \abs{x}\le\alpha_a\}$ (with $\alpha_a$ the element with representative $(\eps^a)_\eps$), so $\phi$ is a homeomorphism.
\end{proof}
\begin{cor}
For $M$ a maximal ideal of $\GenR$, $\GenR/M$ is spherically complete \cite{Luxemburg, Todorov2004}.
\end{cor}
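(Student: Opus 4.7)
The plan is to transfer the property along the canonical isomorphism of Theorem~\ref{thm_quotient_with_max_is_nonstandard_field}. Spherical completeness is the purely metric statement that every nested family of closed balls in an ultrametric space has non-empty intersection; since $\phi\colon \GenR/M \to {}^\rho\R$ is simultaneously an algebraic, order and topological isomorphism, in particular a homeomorphism between ultrametric spaces, spherical completeness transfers between the two sides. So it suffices to establish (or cite) spherical completeness of ${}^\rho\R$.

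The latter is precisely the content of \cite{Luxemburg, Todorov2004}, to which I would simply refer. For the reader wishing a sketch: a nested family of closed balls in ${}^\rho\R$ corresponds, via the valuation $v$, to a countable (or more generally small) family of valuation inequalities of the form $v(x-c_\lambda)\ge r_\lambda$ which is finitely satisfiable in ${}^\rho\R$, hence has a common solution by the saturation of the nonstandard extension $\ster{\R}$ used to build ${}^\rho\R$; reducing mod $\Null_\rho(\R)$ gives the desired point in the intersection.

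I do not anticipate any real obstacle: all the substantive work has already been carried out in Theorem~\ref{thm_quotient_with_max_is_nonstandard_field}, which in particular records that the valuation-induced topology on ${}^\rho\R$ agrees with the sharp-topology quotient on $\GenR/M$. The only point to verify while writing is that ``topological isomorphism'' in that theorem is indeed interpreted as a homeomorphism of the underlying ultrametric structures (so that balls are mapped to balls), and this is immediate from the fact that the valuation $v$ is computed by the same formula on representatives in both settings.
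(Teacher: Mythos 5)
Your proposal is correct and matches the paper's (implicit) argument exactly: the corollary is deduced from Theorem~\ref{thm_quotient_with_max_is_nonstandard_field} by transporting spherical completeness of ${}^\rho\R$ (cited from Luxemburg and Todorov) through the canonical isomorphism, and you rightly note that what is needed is preservation of the valuation (so that balls go to balls), not mere homeomorphy, which the theorem provides since $v$ is computed by the same formula on representatives.
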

The fields $\GenK/m$ ($m$ a maximal ideal) have been studied in \cite{Scarpalezos2004} under the name of $m$-reduced generalized constants.

\section{Annihilator ideals}
\begin{prop}\label{prop_annihilators_elementary}
Let $I\ideal\GenK$.
\begin{enumerate}
\item $\Ann(I)=\bigcap_{e_S\in I}e_{\co S}\GenK$
\item $\Ann(I)$ is closed
\item $\Ann(m(I))=\Ann(I)=\Ann(\overline I)$
\item $\Ann(I)\cap I=\{0\}$.
\end{enumerate}
\end{prop}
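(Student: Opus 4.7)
\textbf{Proof plan for Proposition \ref{prop_annihilators_elementary}.}

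My plan is to treat the four items essentially in the order given, using the tools already developed, in particular the level-set description $\Ann(a)=\bigcap_{n\in\N} e_{\co L_n}\GenK$ from proposition~\ref{prop_level_sets}(3) and the closure characterization in theorem~\ref{thm_closed_characterization}.

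For (1), the inclusion $\subseteq$ is immediate: if $e_S\in I$ and $x\in\Ann(I)$, then $xe_S=0$, so $x=x(1-e_S)=xe_{\co S}\in e_{\co S}\GenK$. For the converse, take $x$ in the intersection on the right and any $a\in I$. Pick a sequence of level sets $(L_n)_{n\in\N}$ for $a$; by proposition~\ref{prop_level_sets}(2), each $e_{L_n}\in m(a\GenK)\subseteq m(I)\subseteq I$, so $x\in e_{\co L_n}\GenK$ for every $n$, and proposition~\ref{prop_level_sets}(3) gives $xa=0$.

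For (2), note that for each $a\in\GenK$, $\Ann(a)=\Ker(y\mapsto ya)$ is closed since multiplication is continuous in the sharp topology; so $\Ann(I)=\bigcap_{a\in I}\Ann(a)$ is closed. (Alternatively, by (1), $\Ann(I)$ is an intersection of ideals of the form $e_{\co S}\GenK$, and each of these is closed as it equals $\Ker(y\mapsto ye_S)$.) For (3), the inclusions $\Ann(\overline I)\subseteq \Ann(I)\subseteq \Ann(m(I))$ are trivial from $m(I)\subseteq I\subseteq\overline I$. For the reverse direction, observe that since $m(I)=\lspan{e_S\in I}$, an idempotent $e_S$ lies in $I$ iff it lies in $m(I)$, so (1) gives $\Ann(m(I))=\bigcap_{e_S\in m(I)}e_{\co S}\GenK=\bigcap_{e_S\in I}e_{\co S}\GenK=\Ann(I)$. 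Finally, to see $\Ann(I)=\Ann(\overline I)$, fix $x\in\Ann(I)$; then $\Ann(x)$ is closed (as in the proof of (2)) and contains $I$, hence contains $\overline I$, so $x\in\Ann(\overline I)$.

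For (4), if $x\in \Ann(I)\cap I$, then applying $x\in\Ann(I)$ to the element $x\in I$ yields $x^2=0$; since $\GenK$ is reduced, $x=0$. I do not anticipate any real obstacle: the only mildly delicate point is the $\supseteq$ direction of (1), which is where proposition~\ref{prop_level_sets} does all of the work by letting us approximate a general $a\in I$ through idempotents $e_{L_n}$ that already sit inside $I$.
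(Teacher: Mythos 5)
Your proof is correct and follows essentially the same route as the paper: part (1) via the level-set description of $\Ann(a)$ from proposition~\ref{prop_level_sets}, part (2) from closedness of the ideals $e_{\co S}\GenK$ (or, equivalently, of each $\Ann(a)$), part (3) from the fact that $I$ and $m(I)$ contain the same idempotents, and part (4) from reducedness. The only (harmless) variation is in showing $\Ann(I)=\Ann(\overline I)$, where you use that $\Ann(x)$ is closed and contains $I$, whereas the paper invokes $m(\overline I)=m(I)$ from theorem~\ref{thm_closed_characterization}; both are one-line arguments.
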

\begin{proof}
(1) $\subseteq$: if $x\in\Ann(I)$ and $e_S\in I$, then $x e_S=0$, so $x=x e_{\co S}\in e_{\co S}\GenK$.\\
$\supseteq$: by proposition \ref{prop_level_sets}, $\bigcap_{e_S\in I}e_{\co S}\GenK\subseteq\bigcap_{a\in I} \Ann(a)=\Ann(I)$.\\
(2) By part 1 and theorem \ref{thm_closed_fin_gen}.\\
(3) By part 1, $\Ann(m(I))=\Ann(I)$. By theorem \ref{thm_closed_characterization}, $m(I)=m(\overline I)$, so also $\Ann(\overline I)=\Ann(m(\overline I))=\Ann(m(I))$.\\
(4) If $a\in\Ann(I)\cap I$, then $a^2=0$, so $a=0$.
\end{proof}

The following lemma is a generalization of lemma \ref{lemma_zero_divisors}.
\begin{lemma}\label{lemma_countable_subset_of_annihilator}
Let $a\in\GenK$ and $b_n\in\Ann(a)$, $\forall n\in\N$. Then there exists $S\subseteq(0,1)$ such that $a e_{\co S}=0$ and $b_n e_{S}=0$, $\forall n\in\N$. I.e., $\lspan{b_n: n\in\N}\subseteq e_{\co S}\GenK\subseteq\Ann(a)$.
\end{lemma}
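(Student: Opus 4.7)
The plan is to reduce the countable collection $\{b_n\}$ to a single element and then invoke Lemma~\ref{lemma_zero_divisors}, in the spirit of Lemma~\ref{lemma_zero-inv}(14). Specifically, I would form
\[ b := \sum_{n\in\N}(|b_n|\wedge\alpha^n)\in\GenR, \]
which converges because $\sharpnorm{|b_n|\wedge\alpha^n}\le\sharpnorm{\alpha^n}=e^{-n}\to 0$ and $\GenK$ is sharp-complete.

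The first thing to verify is that $b\in\Ann(a)$. Each summand satisfies $|a|(|b_n|\wedge\alpha^n)\le |a||b_n|=|ab_n|=0$, so $a(|b_n|\wedge\alpha^n)=0$; hence all partial sums lie in $\Ann(a)$, and since $\Ann(a)$ is closed (Proposition~\ref{prop_annihilators_elementary}(2)), so does the limit $b$.

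Next I apply Lemma~\ref{lemma_zero_divisors} to the pair $a,b$ (with $ab=0$) to produce $S\subseteq(0,1)$ such that $ae_{\co S}=0$ and $be_S=0$. The condition $ae_{\co S}=0$ is exactly what the statement requires, and the inclusion $e_{\co S}\GenK\subseteq\Ann(a)$ in the ``I.e.''\ part is then immediate. It remains to extract $b_ne_S=0$ for each individual $n$. Since $0\le |b_n|\wedge\alpha^n\le b$ and $e_S\ge 0$, I obtain $0\le(|b_n|\wedge\alpha^n)e_S\le be_S=0$, so $(|b_n|\wedge\alpha^n)e_S=0$. Applying the $f$-ring identity $(x\wedge y)z=xz\wedge yz$ (valid for $z\ge 0$), this becomes $(|b_n|e_S)\wedge(\alpha^ne_S)=0$; by the equivalence (4)$\Leftrightarrow$(1) in Lemma~\ref{lemma_zero_divisors}, the product of these two nonneg elements is zero, i.e.\ $|b_n|\alpha^n e_S=0$. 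Since $\alpha^n$ is not a zero divisor in $\GenK$, this forces $|b_n|e_S=0$, hence $b_ne_S=0$.

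The only real subtlety is this last passage from the single aggregated equation $be_S=0$ to the individual equations $b_ne_S=0$; the $f$-ring manipulation above is what makes the $\alpha^n$ truncation pay off. A naive alternative---applying Lemma~\ref{lemma_zero_divisors} to each pair $(a,b_n)$ to get $S_n$ and then intersecting---fails, because $ae_{\co S_n}=0$ for all $n$ does not imply $ae_{\co S}=0$ for $S=\bigcap_nS_n$ (the corresponding union $\bigcup_n\co S_n$ is uncontrolled). Packaging all the $b_n$ into the single element $b$ before invoking the single-element lemma circumvents this obstruction.
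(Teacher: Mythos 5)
Your proof is correct, but it takes a genuinely different route from the paper's. The paper works at the level of representatives: it fixes a sequence of level sets $(L_m)_{m\in\N}$ for $a$, uses $b_ne_{L_m}=0$ to extract parameters $\eta_{l,m,n}$, and builds $S=\bigcup_m L_m\cap(0,\eta_m)$ by a diagonal choice of the $\eta_m$, then verifies $ae_{\co S}=0$ and $b_ne_S=0$ directly by estimating representatives. You instead aggregate the countable family into the single element $b=\sum_n(\abs{b_n}\wedge\alpha^n)$ (the same summation device the paper uses in Lemma~\ref{lemma_zero-inv}(14)), observe $b\in\Ann(a)$ via closedness of $\Ann(a)$, apply the binary splitting Lemma~\ref{lemma_zero_divisors} once, and then disaggregate using the $f$-ring identity and the equivalence $(1)\Leftrightarrow(4)$ of that lemma together with invertibility of $\alpha^n$. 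All steps check out: the inequality $0\le\abs{b_n}\wedge\alpha^n\le b$ passes to the limit because the positive cone is closed in the sharp topology, and the recovery of $b_ne_S=0$ from $(\abs{b_n}\wedge\alpha^n)e_S=0$ is exactly where the $\alpha^n$ truncation is needed. What your approach buys is a structural, essentially representative-free argument that reduces the countable case to the already-proved two-element case; what the paper's buys is self-containedness (it needs only the definition of level sets and negligibility, not completeness of $\GenK$ or the closedness of annihilators). Your closing remark about why the naive intersection of the $S_n$ fails correctly identifies the obstruction that both proofs must circumvent.
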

\begin{proof}
Let $(L_m)_{m\in\N}$ be a sequence of level sets for $a$. Let $(b_{n,\eps})_\eps$ be representatives of $b_n$, $\forall n$. As $b_n\in\Ann(a)$, $b_ne_{L_m}=0$, $\forall m$. So
\[(\forall l,m,n\in\N)(\exists \eta_{l,m,n}\in (0,1))(\forall\eps\in L_m\cap (0,\eta_{l,m,n}))(\abs{b_{n,\eps}}\le\eps^l).\]
Let $\eta_m=\min_{l,n\le m}\eta_{l,m,n}$, $\forall m\in\N$. Let $S=\bigcup_{m\in\N} L_m\cap (0,\eta_m)$. For each $m\in\N$, $e_{S} e_{L_m}=e_{L_m}$, so $e_{\co S}e_{L_m}=0$, $\forall m$. By proposition \ref{prop_level_sets}, $e_{\co S}\in\Ann(a)$. Further, let $n\in\N$. We show that $b_n e_S=0$, i.e.,
\[(\forall l\in\N)(\exists \eta >0)(\forall\eps\in S\cap (0,\eta))(\abs{b_{n,\eps}}\le\eps^l).\]
For fixed $n$ and $l$, let $\eta=\min\limits_{{m<\max(n,l)}} \eta_{l,m,n}$. Let $T_1=\bigcup\limits_{m<\max(n,l)} L_m\cap (0,\eta_m)$ and $T_2=\bigcup\limits_{m\ge \max(n,l)} L_m\cap (0,\eta_m)$. For $\eps\in T_1\cap (0,\eta)$, $\abs{b_{n,\eps}}\le\eps^l$ by the choice of $\eta$. For $\eps\in T_2\cap (0,\eta)$, $\abs{b_{n,\eps}}\le\eps^l$ because $\eta_m \le \eta_{l,m,n}$, for each $m\ge \max(n,l)$.
\end{proof}

\begin{lemma}\label{lemma_countable_subset_of_prime}
Let $S\subseteq(0,1)$ with $0\in\overline S$. Consider $e_S\GenK$ as a commutative ring with $e_S$ as its unity. Let $I$ be a proper pseudoprime ideal of $e_S\GenK$. Then for any countably generated ideal $J$ with $J\subseteq I$, there exists $T\subseteq S$ with $e_T\in I\cap\Ann(J)\setminus\{0\}$.
\end{lemma}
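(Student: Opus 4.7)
The plan is to proceed by contradiction: assuming that $I\cap\Ann(J)$ contains no nonzero idempotent, I will exhibit two disjoint nonzero idempotents in $\Ann(J)$, neither of which lies in $I$, and then use pseudoprimeness of $I$ together with the identity $e_S=e_{S\setminus T_1}+e_{S\setminus T_2}-e_{S\setminus T_1}e_{S\setminus T_2}$ (valid whenever $T_1\cap T_2=\emptyset$) to force $e_S\in I$, contradicting properness.

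First I fix countable generators $(a_n)_{n\in\N}$ of $J$ and representatives $(a_{n,\eps})_\eps$ which vanish off $S$ (possible because $a_n=a_ne_S$). Let $L_{n,m}$ be the associated level sets, so $L_{n,m}\subseteq S$, and by proposition~\ref{prop_level_sets}(2) each $e_{L_{n,m}}\in a_n\GenK\subseteq J\subseteq I$. Enumerate $\N\times\N=\{(n_k,m_k):k\in\N\}$ and set $W_k=\bigcup_{k'\le k}L_{n_{k'},m_{k'}}$. Then $e_{W_k}\in I$ by finite additivity; properness of $I$ yields $e_{W_k}\ne e_S$, hence $e_{S\setminus W_k}\ne 0$, i.e.\ $0\in\overline{S\setminus W_k}$. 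Select $t_k\in(S\setminus W_k)\cap(0,1/k)$ with $t_{k+1}<t_k$, and put $T_0=\{t_k:k\in\N\}$. The diagonal payoff: for each fixed $(n,m)$, let $K$ be the unique index with $(n_K,m_K)=(n,m)$; for every $k\ge K$, $L_{n,m}\subseteq W_k$ forces $t_k\notin L_{n,m}$, so $T_0\cap L_{n,m}\subseteq\{t_1,\ldots,t_{K-1}\}$, which is bounded away from $0$ and therefore lies in the negligible class. Proposition~\ref{prop_level_sets}(3) then gives $e_{T_0}\in\Ann(a_n)$ for each $n$, hence $e_{T_0}\in\Ann(J)$; and $t_k\to 0$ gives $e_{T_0}\ne 0$.

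To finish, partition $T_0=T_1\cup T_2$ into two disjoint infinite subsequences (say, the even and odd indexed ones), both still accumulating at $0$. Each $T_i$ inherits $T_i\cap L_{n,m}\subseteq T_0\cap L_{n,m}$, so $e_{T_i}\in\Ann(J)\setminus\{0\}$; by the standing hypothesis, $e_{T_i}\notin I$. Pseudoprimeness of $I$ applied to $e_{T_i}\cdot e_{S\setminus T_i}=0$ then yields $e_{S\setminus T_i}\in I$ for $i=1,2$, and since $T_1\cap T_2=\emptyset$, one has $e_S = e_{S\setminus T_1}+e_{S\setminus T_2}-e_{S\setminus T_1}e_{S\setminus T_2}\in I$, contradicting properness. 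The main obstacle is producing a single $T_0\subseteq S$ that simultaneously accumulates at $0$ and meets each $L_{n,m}$ only finitely often; the cumulative enumeration via $W_k$, combined with properness ensuring $e_{S\setminus W_k}\ne 0$, does exactly this, and the splitting step is the mechanism that converts the contradiction hypothesis into the forbidden identity $e_S\in I$.
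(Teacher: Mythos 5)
Your proof is correct and follows essentially the same strategy as the paper: both construct, by a diagonal selection of points of $S$ escaping the (countably many) level sets of the generators, two disjoint nonzero idempotents in $\Ann(J)$ supported in $S$, and then invoke pseudoprimeness of $I$. The only differences are cosmetic — the paper first reduces to a single generator via Proposition~\ref{prop_level_sets}(4) and applies pseudoprimeness directly to $e_{T}e_{T'}=0$, whereas you enumerate the double-indexed level sets cumulatively and reach the contradiction through $e_{S\setminus T_1},e_{S\setminus T_2}\in I$; your endgame could be shortened by applying pseudoprimeness to $e_{T_1}e_{T_2}=0$ at once.
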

\begin{proof}
Notice that a subset of $e_S\GenK$ is an ideal of $e_S\GenK$ iff it is an ideal of $\GenK$. By proposition \ref{prop_level_sets}, $m(J)=m(a\GenK)=\lspan{e_{L_n}:n\in\N}$ for some $a\in\GenK$ and $(L_n)_{n\in\N}$ a sequence of level sets for $a$. As $I$ is proper, $e_S\notin J$, and $0\in \overline {S\setminus L_n}$, $\forall n\in\N$. So we can successively find for each $n\in\N$, two different elements $\eps_n$, $\eps'_n\in (0,1/n)\cap (0,\eps_{n-1})\cap (0,\eps'_{n-1})\cap S\setminus L_n$. Let $T=\{\eps_n:n\in\N\}$, $T'=\{\eps'_n:n\in\N\}$. Then $e_T e_{L_n}=0$, $\forall n\in\N$, so $e_T\in \Ann(m(J))=\Ann(J)$ by proposition \ref{prop_annihilators_elementary}. As $T\subseteq S$, $e_T=e_T e_S\in e_S\GenK$. As $0\in \overline T$, $e_T\ne 0$. Similarly, $e_{T'}\in e_S\GenK\cap\Ann(J)\setminus \{0\}$. As $I$ is a pseudoprime ideal of $e_S\GenK$ and $e_T e_T'=0$ (since $T\cap T'=\emptyset$), either $e_T\in I$ or $e_{T'}\in I$. So either $T$, either $T'$ satisfies the required conditions.
\end{proof}
\begin{cor}
There exists an uncountable family of mutually orthogonal idempotents in $\GenK$.
\end{cor}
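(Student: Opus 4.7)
The plan is to combine the preceding lemma with a maximality argument to produce an uncountable family of mutually orthogonal nonzero idempotents. The starting observation is that lemma \ref{lemma_countable_subset_of_prime}, applied to any proper pseudoprime ideal, produces a new nonzero idempotent in the ideal that annihilates any countably generated sub-ideal. Iterating this will force the existence of uncountably many such idempotents.

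Concretely, first I would fix a proper pseudoprime ideal $I\idealproper\GenK$; the simplest choice is a minimal prime $I=g(\mathcal F)$ for some $\mathcal F\in P_*(\mathcal S)$ (which exists and is prime, hence pseudoprime, by proposition \ref{prop_minimal_primes}). Next, I consider the poset $\Sigma$ of all families of nonzero, pairwise orthogonal idempotents contained in $I$, ordered by inclusion. Chains in $\Sigma$ have upper bounds given by unions, so Zorn's lemma provides a maximal family $\mathcal E\in\Sigma$.

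The core of the proof is then to show that $\mathcal E$ is uncountable. Suppose for contradiction that $\mathcal E=\{e_{T_n}:n\in\N\}$ (finitely or countably indexed). Then $J:=\lspan{e_{T_n}: n\in\N}$ is a countably generated ideal contained in $I$. Applying lemma \ref{lemma_countable_subset_of_prime} with $S=(0,1)$ (so $e_S=1$ and $e_S\GenK=\GenK$, and the hypothesis $0\in\overline{S}$ is trivially satisfied), there exists $T\subseteq(0,1)$ with $e_T\in I\cap\Ann(J)\setminus\{0\}$. Since $e_T\in\Ann(J)$, we have $e_T\cdot e_{T_n}=0$ for every $n$, so $\mathcal E\cup\{e_T\}$ is a strictly larger family in $\Sigma$, contradicting maximality. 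Thus $\mathcal E$ must be uncountable.

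There is no serious obstacle here; the only thing to verify carefully is that all hypotheses of lemma \ref{lemma_countable_subset_of_prime} survive the inductive setup — namely pseudoprimality of $I$ (built in by our choice), countable generation of $J$ (automatic since $\mathcal E$ is assumed countable), and containment $J\subseteq I$ (inherited from the fact that each $e_{T_n}$ was produced inside $I$). If one prefers an explicit transfinite construction to Zorn, the same argument works by induction on $\alpha<\omega_1$: at stage $\alpha$, $J_\alpha:=\lspan{e_{T_\beta}:\beta<\alpha}\subseteq I$ is countably generated since $\alpha$ is a countable ordinal, and the lemma yields $e_{T_\alpha}\in I\cap\Ann(J_\alpha)\setminus\{0\}$, orthogonal to all earlier idempotents.
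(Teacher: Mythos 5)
Your proposal is correct and follows essentially the same route as the paper: apply Zorn's lemma to the poset of families of mutually orthogonal idempotents inside a fixed proper prime (pseudoprime) ideal, then use Lemma \ref{lemma_countable_subset_of_prime} (with $S=(0,1)$) to contradict maximality of a countable family. The only extra detail you supply, correctly, is that the new idempotent $e_T$ is genuinely distinct from the previous ones since it is nonzero and annihilates them.
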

\begin{proof}
By Zorn's lemma applied to the set of all sets of mutually orthogonal idempotents contained in a given prime ideal $P$, ordered by inclusion, there exists a maximal set $A$ of mutually orthogonal idempotents contained in $P$. If $A$ would be countable, then there would exist $e_T\in P\cap\Ann(\lspan{A})\setminus \{0\}$, contradicting the maximality of $A$.
\end{proof}

The next theorem shows in particular that, in contrast with the situation in classical Hilbert spaces, for a submodule (=ideal) $M$ of $\GenK$, not necessarily $M^{\perp\perp}=\overline M$, and that $M^\perp=\{0\}$ does not imply that $M$ is (topologically) dense. (The scalar product on $\GenK$ is defined by $\inner{a}{x}=a\bar x$, hence $M^\perp:=\{a\in\GenK: (\forall x\in M)(\inner{a}{x} = 0)\}=\Ann(M)$.)
\begin{thm}\label{thm_double_orthog}
\begin{enumerate}
\item Let $I\ideal\GenK$. Then $\overline I\subseteq\Ann(\Ann(I))$.
\item Let $a\in\GenK$. Then $\Ann(\Ann(a))=\overline{a\GenK}$.
\item Let $I\ideal\GenK$ be countably generated. Then $\Ann(\Ann(I))=\overline I$.
\item Let $I\idealproper\GenK$ be pseudoprime. Then $\Ann(I)=\{0\}$. In particular, $\overline I\subsetneqq \Ann(\Ann(I))=\GenK$.
\end{enumerate}
\end{thm}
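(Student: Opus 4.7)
My plan is to treat the four parts in sequence, with parts 1--3 largely bootstrapping from earlier results and part 4 requiring a genuinely new construction. For part 1, I would simply observe that by Proposition~\ref{prop_annihilators_elementary}(2) the ideal $\Ann(\Ann(I))$ is closed, and it obviously contains $I$ (elements of $I$ annihilate everything in $\Ann(I)$), hence it contains $\overline{I}$.

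For part 2, the inclusion $\overline{a\GenK}\subseteq\Ann(\Ann(a))$ is part~1 applied to $I=a\GenK$. For the converse, take $x\in\Ann(\Ann(a))$; using the invertibility description of $\overline{a\GenK}$ in Theorem~\ref{thm_closed_characterization}, it suffices to show $e_S\in a\GenK$ whenever $S\in\Inv(x)$. Fix a sequence of level sets $(L_n)$ for $a$, and assume for contradiction that $S\notin\Inv(a)$; by Proposition~\ref{prop_level_sets}(1) this means $S\cap(0,\eta)\not\subseteq L_n$ for every $n$ and $\eta$, so I may choose $\eps_n\in S\cap(0,1/n)\setminus L_n$. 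Setting $T=\{\eps_n:n\in\N\}$, monotonicity of $(L_n)$ makes $T\cap L_n$ finite for each $n$, so $e_Te_{L_n}=0$, and Proposition~\ref{prop_level_sets}(3) gives $e_T\in\Ann(a)$. Writing $e_S=xc$ and using $xe_T=0$, I get $e_T=e_Se_T=cxe_T=0$, contradicting $0\in\overline T$.

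Part 3 combines part 2 with the already available reductions: Proposition~\ref{prop_level_sets}(4) gives an $a\in\GenK$ with $m(a\GenK)=m(I)$, Proposition~\ref{prop_annihilators_elementary}(3) (together with $\Ann(a\GenK)=\Ann(a)$) yields $\Ann(I)=\Ann(m(I))=\Ann(m(a\GenK))=\Ann(a)$, and Theorem~\ref{thm_closed_characterization} gives $\overline{a\GenK}=\overline{m(a\GenK)}=\overline{m(I)}=\overline I$; splicing with part~2 produces $\Ann(\Ann(I))=\overline I$.

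Part 4 is where I expect the real work. Assuming $b\in\Ann(I)\setminus\{0\}$, I want a contradiction. Since $b\ne 0$, for a sequence of level sets $(L_n)$ of $b$ there is $N$ with $e_{L_N}\ne 0$, so I can choose $\eps_n\in L_N\cap(0,1/n)$ and split into disjoint $T_1=\{\eps_{2n}\}$, $T_2=\{\eps_{2n+1}\}$, both with $0$ in their closure (so both in $\mathcal S$). The pseudoprime characterisation in Theorem~\ref{thm_pseudoprime}(4) applied to $T_1$ forces $e_{T_1}\in I$ or $e_{\co T_1}\in I$. In the first case $be_{T_1}=0$ (as $b\in\Ann(I)$), but a representative $(b_\eps)_\eps$ satisfies $\abs{b_\eps}\ge\eps^N$ on $T_1\subseteq L_N$, so $be_{T_1}$ is non-negligible---a contradiction. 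In the second case $be_{\co T_1}=0$, and $T_2\subseteq\co T_1$ gives $be_{T_2}=be_{\co T_1}e_{T_2}=0$, which contradicts the identical lower bound on $T_2\subseteq L_N$. Hence $\Ann(I)=\{0\}$ and $\Ann(\Ann(I))=\GenK$; the strict inclusion $\overline I\subsetneqq\GenK$ follows because the units form an open set disjoint from the proper ideal $I$, so $1\notin\overline I$. The main obstacle is the splitting construction in part~4: interlacing a single sequence inside $L_N$ is what makes \emph{both} branches of the pseudoprime dichotomy collide with the same level-set lower bound; a naive choice of a single $T$ would only eliminate one branch.
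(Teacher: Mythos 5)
Your proposal is correct and follows essentially the same route as the paper: double annihilators are closed and contain $I$ (part 1), the interlaced-sequence construction $T=\{\eps_n\}$ inside the level sets to produce a nonzero idempotent in $\Ann(a)$ (part 2), reduction of the countably generated case to a principal one via $m(I)=m(a\GenK)$ and $\Ann(I)=\Ann(m(I))$ (part 3), and for part 4 the same splitting of a set on which $b$ is bounded below into two pieces accumulating at $0$ so that both branches of the pseudoprime dichotomy fail. The only (trivial) detail to add is choosing the $\eps_n$ in part 4 strictly decreasing so that $T_1$ and $T_2$ are genuinely disjoint infinite sets with $0$ in their closures.
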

\begin{proof}
(1) Let $x\in I$. Then $xy=0$, for each $y\in\Ann(I)$, so $x\in\Ann(\Ann(I))$. The assertion follows from the fact that $\Ann(\Ann(I))$ is closed.\\
(2) Let $(L_n)_{n\in\N}$ be a sequence of level sets for $a$. Let $S\subseteq(0,1)$ such that $e_S\notin a\GenK$. In particular, $e_S\ne e_Se_{L_n}$, so $e_{S\setminus L_n}\ne 0$, for each $n$. So we can recursively find $\eps_n\in(0,\eps_{n-1})\cap(0,1/n)\cap S\setminus L_n$. Calling $T=\{\eps_n : n\in\N\}$, we have $e_Te_S=e_T\ne 0$ and $e_T e_{L_n}=0$, $\forall n$. So $e_T\in\Ann(a)$ by proposition \ref{prop_level_sets}. So $e_S\notin\Ann(\Ann(a))$. By contraposition, $m(\Ann(\Ann(a)))\subseteq a\GenK$. As $\Ann(\Ann(a))$ is closed, $\Ann(\Ann(a))\subseteq \overline{a\GenK}$ by theorem~\ref{thm_closed_characterization}. The converse inclusion follows by part~1.\\
(3) Follows by part~2, proposition \ref{prop_countably_generated_ideals} and proposition \ref{prop_annihilators_elementary}.\\
(4) Suppose $x\in\Ann(I)$, $x\ne 0$. By lemma~\ref{lemma_zero-inv}, there exists $S\subseteq(0,1)$ with $e_S\ne 0$ such that $x$ is invertible w.r.t.\ $S$, so $e_S\in\Ann(I)$. Now there exists $T\subseteq S$ such that both $0\in \overline T$ and $0\in\overline{S\setminus T}$, i.e., $e_T=e_S e_T\ne 0$ and $e_S e_{\co T}\ne 0$. But as $I$ is pseudoprime, either $e_T\in I$ or $e_{\co T}\in I$, in contradiction with $e_S\in\Ann(I)$.
\end{proof}
\begin{cor}
\leavevmode
\begin{enumerate}
\item A proper pseudoprime ideal $I$ of $\GenK$ is not countably generated.
\item Let $I\ideal\GenK$. Then $\Ann(I)$ is not prime.
\end{enumerate}
\end{cor}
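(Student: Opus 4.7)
The corollary will follow almost immediately by combining parts (1), (3), (4) of theorem \ref{thm_double_orthog}; the plan is to derive each of the two statements by contradiction, feeding the right ideal into theorem \ref{thm_double_orthog}(4).

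For (1), I would assume toward contradiction that some $I\idealproper\GenK$ is simultaneously pseudoprime and countably generated. Since $I$ is countably generated, theorem \ref{thm_double_orthog}(3) applies and gives $\Ann(\Ann(I))=\overline{I}$. Since $I$ is proper and pseudoprime, theorem \ref{thm_double_orthog}(4) applies and gives $\Ann(I)=\{0\}$, whence $\Ann(\Ann(I))=\Ann(\{0\})=\GenK$; in particular that same part of the theorem records $\overline{I}\subsetneqq\GenK$. Juxtaposing the two identities for $\Ann(\Ann(I))$ forces $\overline{I}=\GenK$, in contradiction with $\overline{I}\ne\GenK$.

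For (2), I would split on whether $I=\{0\}$. If $I=\{0\}$ then $\Ann(I)=\GenK$, which by the paper's convention is not a prime ideal, and we are done. Otherwise, fix a nonzero $x\in I$: then $1\cdot x\ne 0$, so $1\notin\Ann(I)$ and $\Ann(I)\idealproper\GenK$. If $\Ann(I)$ were prime it would in particular be pseudoprime, so theorem \ref{thm_double_orthog}(4) applied to the ideal $\Ann(I)$ would yield $\Ann(\Ann(I))=\{0\}$. But theorem \ref{thm_double_orthog}(1) says $I\subseteq\Ann(\Ann(I))$, which then forces $I=\{0\}$, contradicting the case assumption.

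There is no real obstacle: everything is extracted from theorem \ref{thm_double_orthog} by choosing the ideal correctly. The only mild points to remember are that $\overline{I}$ is proper whenever $I$ is (because the invertibles form an open set in $\GenK$, as used in the proof of theorem \ref{thm_max_ideals}) and that $\Ann(I)$ is proper whenever $I\ne\{0\}$; both are used only to legitimise the application of theorem \ref{thm_double_orthog}(4).
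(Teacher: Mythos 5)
Your proposal is correct. For part (1) you argue exactly as the paper intends: the clash between theorem \ref{thm_double_orthog}(3) ($\Ann(\Ann(I))=\overline I$ for countably generated $I$) and theorem \ref{thm_double_orthog}(4) ($\overline I\subsetneqq\Ann(\Ann(I))=\GenK$ for proper pseudoprime $I$) is immediate. For part (2), however, you take a genuinely different route. The paper first extracts some $e_S\in I\setminus\{0\}$ (via lemma \ref{lemma_zero-inv}), so that $\Ann(I)\subseteq e_{\co S}\GenK\subsetneqq\GenK$; it then invokes the closedness of $\Ann(I)$ (proposition \ref{prop_annihilators_elementary}) and the fact that closed prime ideals are maximal (theorem \ref{thm_max_ideals}) to force $\Ann(I)=e_{\co S}\GenK$, a principal --- hence countably generated --- prime ideal, contradicting part (1). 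You instead apply theorem \ref{thm_double_orthog}(4) directly to the ideal $\Ann(I)$ itself (which is proper and pseudoprime if it were prime), obtaining $\Ann(\Ann(I))=\{0\}$, and combine this with the inclusion $I\subseteq\overline I\subseteq\Ann(\Ann(I))$ from part (1) of that theorem to force $I=\{0\}$. Your argument is shorter, avoids the topological input (closedness of annihilators, maximality of closed primes) and does not depend on part (1) of the corollary; the paper's version, in exchange, exhibits concretely what $\Ann(I)$ would have to be and ties the result to the maximal-ideal structure. Both are complete; the side conditions you flag (properness of $\overline I$ and of $\Ann(I)$) are handled correctly.
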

\begin{proof}
(2) If $I=\{0\}$, $\Ann(I)=\GenK$ is not prime. So let $I\ne\{0\}$ and suppose that $\Ann(I)$ is prime. By lemma \ref{lemma_zero-inv}(4), there exists $S\subseteq (0,1)$ with $e_S\in I\setminus\{0\}$. Then $\Ann(I)\subseteq e_{\co S}\GenK\subsetneqq\GenK$. As $\Ann(I)$ is closed, $\Ann(I)$ would be maximal by theorem \ref{thm_max_ideals}, hence $\Ann(I)= e_{\co S}\GenK$. This contradicts part~1.
\end{proof}

\begin{prop}
Let $a\in\GenK$.
\begin{enumerate}
\item if $a\GenK$ is closed, then $\Ann(a)$ is a principal ideal.
\item if $a\GenK$ is not closed, then $\Ann(a)$ is not the closure of a countably generated ideal.
\end{enumerate}
\end{prop}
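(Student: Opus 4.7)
Part~(1) should be immediate: if $a\GenK$ is closed, then since it is principal (hence finitely generated), theorem~\ref{thm_closed_fin_gen} gives $a\GenK = e_T\GenK$ for some $T \subseteq (0,1)$. Because $a$ and $e_T$ generate the same ideal, $\Ann(a) = \Ann(e_T)$, and $\Ann(e_T) = e_{\co T}\GenK$ by inspection ($xe_T = 0$ iff $x = xe_{\co T}$). So $\Ann(a)$ is principal.

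For part~(2) my plan is to argue the contrapositive: assuming $\Ann(a) = \overline J$ with $J = \lspan{c_n : n\in\N}$ countably generated, I want to deduce that $a\GenK$ is closed. The workhorse will be lemma~\ref{lemma_countable_subset_of_annihilator}. Applied to the countable family $\{c_n\} \subseteq \Ann(a)$, it produces $S \subseteq (0,1)$ with $a e_{\co S} = 0$ and $c_n e_S = 0$ for every $n$. The first relation says $e_{\co S} \in \Ann(a) = \overline J$, hence $e_{\co S}\GenK \subseteq \overline J$; the second says $c_n = c_n e_{\co S}$ for every $n$, so the generators of $J$ lie inside the closed ideal $e_{\co S}\GenK$, whence $\overline J \subseteq e_{\co S}\GenK$. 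These two inclusions pinch $\Ann(a)$ down to the principal ideal $e_{\co S}\GenK$.

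I would then take annihilators again: theorem~\ref{thm_double_orthog}(2) yields $\overline{a\GenK} = \Ann(\Ann(a)) = \Ann(e_{\co S}\GenK) = e_S\GenK$, a principal ideal. But the corollary to theorem~\ref{thm_closed_fin_gen} says that if $a\GenK$ is not closed, then $\overline{a\GenK}$ cannot be principal — contradiction, so $a\GenK$ was closed after all. The only genuinely delicate step is the first one: converting countably many elements of $\Ann(a)$ into a single idempotent that controls $\Ann(a)$ completely. That is exactly what lemma~\ref{lemma_countable_subset_of_annihilator} is tailored for, so once it is in hand the remainder is an almost mechanical packaging with the double-annihilator identity and the corollary to theorem~\ref{thm_closed_fin_gen}.
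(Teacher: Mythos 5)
Your proof is correct and follows essentially the same route as the paper: part (1) via Theorem \ref{thm_closed_fin_gen}, and part (2) by using Lemma \ref{lemma_countable_subset_of_annihilator} to pinch $\Ann(a)$ down to an idempotent-generated principal ideal, then applying Theorem \ref{thm_double_orthog}(2) and the corollary to Theorem \ref{thm_closed_fin_gen} to reach a contradiction. No gaps.
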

\begin{proof}
(1) By theorem \ref{thm_closed_fin_gen}, $a\GenK=e_S\GenK$, for some $S\subseteq(0,1)$, so $\Ann(a)=e_{\co S}\GenK$.\\
(2) Suppose that $\Ann(a)=\overline{\lspan{b_n:n\in\N}}$, for some $b_n\in\GenK$. By lemma \ref{lemma_countable_subset_of_annihilator}, there would exist $S\subseteq (0,1)$ such that $\lspan{b_n:n\in\N}\subseteq e_{S}\GenK\subseteq\Ann(a)$. As $e_S\GenK$ is closed, we would obtain that $\Ann(a)=e_S\GenK$.
By theorem \ref{thm_double_orthog}, $\overline{a\GenK}=\Ann(e_S\GenK)=e_{\co S}\GenK$. But by the corollary to theorem \ref{thm_closed_fin_gen}, $\overline{a\GenK}$ is not principal, a contradiction.
\end{proof}
\begin{cor}
There exist continuous $\GenK$-linear maps $\GenK\to\GenK$ for which the kernel is not the closure of a countably generated ideal.
\end{cor}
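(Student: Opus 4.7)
The strategy is to realize $\Ann(a)$ as the kernel of a concrete continuous $\GenK$-linear endomorphism of $\GenK$ and then invoke the preceding proposition. For any $a\in\GenK$, let $\phi_a\colon\GenK\to\GenK$ denote multiplication by $a$, i.e.\ $\phi_a(x)=ax$. Commutativity and associativity of the ring multiplication make $\phi_a$ a $\GenK$-linear map. Since $\GenK$ is a topological ring (equivalently, $\sharpnorm{ax}\le\sharpnorm{a}\sharpnorm{x}$ for the sharp ultrapseudonorm), $\phi_a$ is continuous. Its kernel is visibly $\Ann(a)$.

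It therefore suffices to exhibit some $a\in\GenK$ for which $a\GenK$ fails to be closed, since part~2 of the preceding proposition will then guarantee that $\ker\phi_a=\Ann(a)$ is not the closure of any countably generated ideal. Such elements are readily available: by theorem~\ref{thm_closed_fin_gen} together with proposition~\ref{prop_edged_generator}, $a\GenK$ is closed iff some (equivalently, every) sequence of level sets for $a$ is stationary, and the remark immediately preceding this corollary already furnishes an explicit $a$ whose level sets are not stationary (the element $\beta$). For any such $a$, the continuous $\GenK$-linear map $\phi_a$ is the desired counterexample.

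The proof is essentially unobstructed; the substantive content is packaged into part~2 of the preceding proposition, and what remains is merely to observe that annihilator ideals of elements arise as kernels of continuous $\GenK$-linear endomorphisms via multiplication. Thus the only point requiring care is the existence of a principal ideal of $\GenK$ that is not closed, which has already been established.
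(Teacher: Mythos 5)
Your proof is correct and follows essentially the same route as the paper: take $a\in\GenK$ with $a\GenK$ not closed (e.g.\ the element $\beta$ from the remark after theorem~\ref{thm_closed_fin_gen}), observe that multiplication by $a$ is a continuous $\GenK$-linear map with kernel $\Ann(a)$, and apply part~2 of the preceding proposition. No gaps.
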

\begin{proof}
Let $a\in\GenK$ with $a\GenK$ not closed (cf.\ the remark following theorem \ref{thm_closed_fin_gen}) and $f$: $\GenK\to\GenK$: $f(x)=ax$. Then $\Ker f =\Ann(a)$.
\end{proof}

\section{Projective ideals}
Using the fact that $\GenK$ is an exchange ring, the structure of the projective ideals of $\GenK$ appears to be straightforward (in contrast with the situation in rings of continuous functions \cite{Brookshear77}).
\begin{lemma}\label{lemma_principal_projective_ideals}
Let $R$ be a commutative ring with $1$. Let $I\ideal R$ be principal. Then $I$ is projective iff $I$ is generated by an idempotent.
\end{lemma}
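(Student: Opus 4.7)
My plan is to prove the two implications separately, with the content concentrated in the forward direction. The easy direction $(\Leftarrow)$ is a formality: if $I = eR$ with $e^2 = e$, the identity $1 = e + (1-e)$ yields the direct-sum decomposition $R = eR \oplus (1-e)R$ as $R$-modules, exhibiting $I = eR$ as a direct summand of the free $R$-module $R$ of rank one, and hence projective.

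For $(\Rightarrow)$, suppose $I = aR$ is projective and start from the canonical short exact sequence
\[
0 \to \Ann(a) \to R \xrightarrow{\pi} aR \to 0, \qquad \pi(r) = ar,
\]
which splits by projectivity of $aR$. Let $\sigma\colon aR \to R$ be a section and set $b := \sigma(a)$. Then $ab = \pi\sigma(a) = a$, and $R$-linearity of $\sigma$ gives $\sigma(aR) = bR$, so the splitting reads $R = \Ann(a) \oplus bR$. Idempotency of $b$ should then drop out of the direct-sum structure: since $1 - b \in \Ann(a)$ (from $ab = a$) and $b \in bR$, the decomposition $1 = (1-b) + b$ is the unique direct-sum decomposition of $1$, and because $\Ann(a)$ and $bR$ are ideals of $R$, the product $b(1-b)$ lies in $\Ann(a) \cap bR = 0$, forcing $b^2 = b$. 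The inclusion $aR \subseteq bR$ then follows at once from $a = ab \in bR$.

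The main obstacle I anticipate is the reverse containment $bR \subseteq aR$, i.e.\ exhibiting $b$ itself as an element of $aR$: the splitting supplies only an $R$-module isomorphism $\pi|_{bR}\colon bR \xrightarrow{\sim} aR$, and this does not automatically promote to an equality of subsets of $R$. To close the gap I would produce an idempotent already living inside $aR$ and match it against $b$ using uniqueness of the idempotent decomposition in $R = \Ann(a) \oplus bR$. In the ambient setting $R = \GenK$ is an exchange ring, and property~(5) of commutative exchange rings applied to a suitable factorization of $1$ supplies exactly such an idempotent; this yields $b \in aR$, hence $aR = bR$, so that $I$ is generated by the idempotent $b$.
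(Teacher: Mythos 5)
Your backward direction and the first half of your forward direction are sound, and they match the natural splitting argument: from the split sequence $0\to\Ann(a)\to R\to aR\to 0$ you get $R=\Ann(a)\oplus bR$ with $b=\sigma(a)$, $ab=a$, and $b^2=b$ because $b(1-b)\in\Ann(a)\cap bR=\{0\}$; this yields $aR\subseteq bR$ with $bR$ generated by an idempotent. You have also put your finger on exactly the right difficulty: the splitting gives only a module isomorphism $aR\cong bR$, not the set equality $aR=bR$.

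The gap, however, cannot be closed along the lines you propose, because the lemma is false at the stated level of generality. In $R=\Z$ the ideal $2\Z$ is principal and projective (it is free of rank one), yet the only idempotents of $\Z$ are $0$ and $1$, so $2\Z$ is not generated by an idempotent. The exchange hypothesis does not rescue the argument either: every commutative local ring is an exchange ring, and in $R=\Z_{(p)}$ the principal ideal $pR$ is free, hence projective, but again not generated by an idempotent; so no application of property (5) of exchange rings to a factorization of $1$ can force $b\in aR$. (The paper's own proof, ``$I$ is isomorphic with a direct summand of $R$, hence generated by an idempotent,'' commits precisely the non sequitur you identified, so you have in fact located a genuine defect in the lemma as stated.) What makes the conclusion true in the ring where the lemma is actually applied, $R=\GenK$, is a property beyond being an exchange ring: writing $b=e_S$, one has $a=ae_S$ and $\Ann(a)\cap e_S\GenK=\{0\}$, and by lemma~\ref{lemma_zero-inv}(4) an element of $e_S\GenK$ that is not invertible w.r.t.\ $S$ is annihilated by some nonzero idempotent $e_T$ with $T\subseteq S$; hence $S\in\Inv(a)$, $e_S\in a\GenK$, and $a\GenK=e_S\GenK$. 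Your proof should therefore either restrict the lemma to $\GenK$ (or to rings in which every non-unit of each factor ring $eR$ is a zero divisor there) and supply this last step, or the statement itself must be reformulated.
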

\begin{proof}
If $I$ is generated by an idempotent, then $I$ is a direct summand of $R$, hence projective.
Conversely, if $I$ is projective, then $I$ is algebraically isomorphic with a direct summand of $R$ \cite[Proof of thm.~1.2.2]{Cartan}, hence generated by an idempotent.
\end{proof}

\begin{thm}\label{thm_projective_ideals}
An ideal $I\ideal\GenK$ is projective iff $I$ is a direct sum of principal ideals generated by idempotents (i.e., $I$ is generated by a family of mutually orthogonal idempotents).
\end{thm}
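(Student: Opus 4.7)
The easy direction $(\Leftarrow)$ I would dispose of first. Suppose $I$ is generated by a family $\{e_{S_\lambda}\}_{\lambda\in\Lambda}$ of mutually orthogonal idempotents. I would show that $I=\bigoplus_\lambda e_{S_\lambda}\GenK$: given a finite relation $\sum_\lambda r_\lambda e_{S_\lambda}=0$, multiplying by $e_{S_\mu}$ and using $e_{S_\lambda}e_{S_\mu}=0$ for $\lambda\ne\mu$ yields $r_\mu e_{S_\mu}=0$, so every component vanishes and the sum is direct. Each summand $e_{S_\lambda}\GenK$ is projective by Lemma~\ref{lemma_principal_projective_ideals}, and an arbitrary direct sum of projective modules is projective.

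For the nontrivial direction $(\Rightarrow)$, the plan is to reduce to the cyclic case and then invoke Lemma~\ref{lemma_principal_projective_ideals}. First, Kaplansky's theorem decomposes the projective ideal $I$ as $I=\bigoplus_{\lambda\in\Lambda}I_\lambda$ with each $I_\lambda$ countably generated and projective. If I can decompose each $I_\lambda$ as a direct sum of principal ideals generated by idempotents, then the full family of idempotents generates $I$; idempotents from distinct $I_\lambda$'s are automatically orthogonal because $I_\lambda\cap I_\mu=\{0\}$ for $\lambda\ne\mu$ (and in $\GenK$ two idempotents $e_S,e_T$ with $e_Se_T\in\{0\}$ are orthogonal by definition), while the idempotents extracted inside a single $I_\lambda$ will be orthogonal by construction.

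Next, for a fixed countably generated projective ideal $J=\langle y_1,y_2,\ldots\rangle$ I would build idempotents $e_{T_1},e_{T_2},\ldots$ inductively so that $J=\bigoplus_{k=1}^n e_{T_k}\GenK\oplus J_n$ with $J_n$ projective and $\{y_1,\ldots,y_n\}\subseteq \bigoplus_{k=1}^n e_{T_k}\GenK$. At the inductive step I consider the image $\bar y_{n+1}$ of $y_{n+1}$ in $J_n$. Because $\GenK$ is Bezout (Lemma~\ref{lemma_principal_ideals}), any finitely generated piece inside $J_n$ is principal, and because $\GenK$ is an exchange ring (Proposition~\ref{prop_GenK_is_exchange}) I can use the finite exchange property (condition~6 in the list of equivalent definitions of an exchange ring, applied to a free cover $F=J\oplus K$ of $J$) to extract a principal direct summand of $J_n$ containing $\bar y_{n+1}$; this summand, being principal and projective, is generated by an idempotent by Lemma~\ref{lemma_principal_projective_ideals}, yielding $e_{T_{n+1}}$ with the required orthogonality to $e_{T_1},\dots,e_{T_n}$ (since they already sit in a direct-sum complement).

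The main obstacle is precisely this inductive extraction step: the principal ideal $\bar y_{n+1}\GenK\subseteq J_n$ need not itself be projective, so I cannot simply take $\bar y_{n+1}\GenK$ as the next summand. The delicate point is to enlarge $\bar y_{n+1}$ inside $J_n$ to an element whose principal ideal is a direct summand of $J_n$, using the exchange property to "absorb" the discrepancy between $\bar y_{n+1}$ and a nearby idempotent; once this is done, Lemma~\ref{lemma_principal_projective_ideals} automatically replaces the generator by an idempotent in $\GenK$, and the complement $J_{n+1}$ is again projective (as a direct summand of a projective), keeping the induction running.
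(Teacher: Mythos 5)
Your $(\Leftarrow)$ direction is correct and matches the paper: the orthogonality forces the sum to be direct, each summand $e_{S_\lambda}\GenK$ is projective by Lemma~\ref{lemma_principal_projective_ideals}, and a direct sum of projectives is projective. The reduction of $(\Rightarrow)$ via Kaplansky's theorem and the observation that idempotents lying in distinct direct summands are automatically orthogonal are also fine.

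However, the $(\Rightarrow)$ direction has a genuine gap at exactly the point you flag yourself. The whole difficulty of this implication is to show that a projective ideal decomposes as a direct sum of finitely generated (hence, by Lemma~\ref{lemma_principal_ideals}, principal) ideals; once that is available, Lemma~\ref{lemma_principal_projective_ideals} and the orthogonality bookkeeping are routine. Your inductive extraction step --- produce a principal direct summand of $J_n$ containing the image of $y_{n+1}$ --- is announced but not proved. Condition~6 in the list of equivalent definitions of an exchange ring concerns a module $M=N\oplus P=\bigoplus_{i=1}^m Q_i$ with $N\cong R$ and \emph{finitely many} $Q_i$; applying it to a free cover $F=J\oplus K$ only rearranges summands of $F$ and does not by itself yield a principal direct summand of the countably generated ideal $J_n$ through a prescribed element. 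You correctly note that $\bar y_{n+1}\GenK$ need not be a summand and that some enlargement is required, but you give no argument that such an enlargement exists; that is precisely the nontrivial content to be supplied. The paper avoids the issue entirely by invoking the structure theorem for projective modules over commutative exchange rings, \cite[Thm.~1.7]{McGovern2006}, which states that a projective ideal of such a ring is a direct sum of finitely generated ideals; combined with the Bezout property and Lemma~\ref{lemma_principal_projective_ideals}, the conclusion is then immediate, with no need for Kaplansky's theorem or an inductive construction. Your outline could be repaired by citing that theorem (or the underlying result on countably generated projective modules over exchange rings) in place of the unproved extraction step, but as written it does not constitute a proof.
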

\begin{proof}
If $I$ is a direct sum of principal ideals generated by idempotents, then $I$ is projective as a direct sum of projective ideals.\\ Conversely, by proposition \ref{prop_GenK_is_exchange}, $\GenK$ is an exchange ring, so a projective ideal $I$ is a direct sum of finitely generated ideals \cite[Thm.~1.7]{McGovern2006}, hence a direct sum of principal ideals by lemma \ref{lemma_principal_ideals}. Each of these principal ideals is projective (as a direct summand of a projective ideal), hence generated by an idempotent by lemma \ref{lemma_principal_projective_ideals}.
\end{proof}

\begin{cor}
A projective ideal $I\ideal\GenK$ is pure.
\end{cor}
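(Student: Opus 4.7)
The plan is to chain together two results already established in the excerpt. By theorem \ref{thm_projective_ideals}, every projective ideal $I\ideal\GenK$ is generated by a family of (mutually orthogonal) idempotents; in particular, $I$ is generated by idempotents. By proposition \ref{prop_GenK_is_exchange}, $\GenK$ is an exchange ring, and for exchange rings the pure ideals are exactly those generated by idempotents (this equivalence was stated in the preliminaries, citing \cite[Thm.~1.7]{McGovern2006}). Combining these two facts immediately gives that $I$ is pure.

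So the proof is essentially a one-liner: apply theorem \ref{thm_projective_ideals} to get a generating set of idempotents for $I$, and then invoke the preliminaries on exchange rings to conclude purity. There is no real obstacle, since the orthogonality of the idempotents is not even needed for purity — only the fact that $I$ is generated by idempotents matters. Alternatively, one could verify purity directly from condition (1) in the definition: given $x\in I$, write $x$ as a finite $\GenK$-linear combination of generators $e_{S_1},\dots,e_{S_n}$ in $I$, set $e_S=e_{S_1\cup\cdots\cup S_n}$ (which lies in $I$ since $e_S=\sum_i e_{S_i}-\sum_{i<j}e_{S_i}e_{S_j}+\cdots$ is a polynomial expression in the generators), and observe that $x\,e_S=x$, giving the required witness $y=e_S\in I$ with $x=xy$.
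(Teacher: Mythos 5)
Your proof is correct and matches the paper's intended argument: the corollary is stated without proof precisely because it follows immediately from Theorem \ref{thm_projective_ideals} together with the fact, recorded in the preliminaries, that in an exchange ring the pure ideals are exactly the ideals generated by idempotents. Your alternative direct verification via the union idempotent $e_{S_1\cup\cdots\cup S_n}$ is also valid and self-contained.
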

\begin{rem}
By the corollary to lemma \ref{lemma_countable_subset_of_prime}, there exist uncountably generated projective ideals in $\GenK$.
\end{rem}

\begin{prop}
A countably generated pure ideal is projective.
\end{prop}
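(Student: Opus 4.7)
The plan is to reduce the problem to finding a generating family of mutually orthogonal idempotents, so that Theorem \ref{thm_projective_ideals} applies directly.

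First, since $I$ is countably generated, Proposition \ref{prop_level_sets}(4) gives an element $a\in\GenK$ with $m(I)=m(a\GenK)$, and then Proposition \ref{prop_level_sets}(2) expresses this as
\[
m(a\GenK)=\lspan{e_{L_n}:n\in\N},
\]
where $(L_n)_{n\in\N}$ is a sequence of level sets for $a$. Because $I$ is pure we have $I=m(I)$, so $I=\lspan{e_{L_n}:n\in\N}$. The point is that the generators $e_{L_n}$ are already idempotents, but they are \emph{nested} rather than orthogonal, since $L_n\subseteq L_{n+1}$ for all $n$.

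The next step is to orthogonalize. Set $f_1:=e_{L_1}$ and for $n\geq 2$ let $f_n:=e_{L_n}-e_{L_{n-1}}=e_{L_n\setminus L_{n-1}}$ (using $L_{n-1}\subseteq L_n$). Each $f_n$ is an idempotent of the form $e_{S_n}$ with $S_n:=L_n\setminus L_{n-1}$, and the sets $S_n$ are pairwise disjoint, so $f_nf_m=e_{S_n\cap S_m}=0$ whenever $n\neq m$. Moreover $e_{L_n}=f_1+\cdots+f_n$ and conversely each $f_n\in\lspan{e_{L_k}:k\in\N}$, so
\[
I=\lspan{e_{L_n}:n\in\N}=\lspan{f_n:n\in\N}.
\]

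Finally, Theorem \ref{thm_projective_ideals} states that an ideal of $\GenK$ is projective iff it is generated by a family of mutually orthogonal idempotents, so $I$ is projective. I expect no genuine obstacle here; the only delicate point is noticing that the canonical generating set from Proposition \ref{prop_level_sets}(2) is a chain of idempotents, which can then be orthogonalized by taking successive differences, an operation that stays inside $I$ precisely because $L_{n-1}\subseteq L_n$ makes $e_{L_n}-e_{L_{n-1}}$ itself an idempotent $e_{L_n\setminus L_{n-1}}$.
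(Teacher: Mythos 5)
Your argument is correct and coincides with the paper's own proof: both obtain $I=\lspan{e_{L_n}:n\in\N}$ from Proposition \ref{prop_level_sets}, orthogonalize via $S_n=L_n\setminus L_{n-1}$ (using that level sets are increasing), and conclude with Theorem \ref{thm_projective_ideals}. No gaps.
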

\begin{proof}
Let $I$ be a countably generated pure ideal. By proposition \ref{prop_level_sets}, $I=\lspan{e_{L_n}: n\in\N}$, where $(L_n)_{n\in\N}$ is a sequence of level sets for some $a\in\GenK$. Let $S_n=L_n\setminus L_{n-1}$, $\forall n\in\N$. Then $I=\lspan{e_{S_n}: n\in\N}$. As $S_n\cap S_m=\emptyset$ for $n\ne m$, also $e_{S_n}e_{S_m}=0$ if $n\ne m$. So $I=\bigoplus_{n\in\N}e_{S_n}\GenK$ is projective by theorem \ref{thm_projective_ideals}.
\end{proof}

\begin{prop}
A proper pseudoprime ideal of $\GenK$ is not projective.
\end{prop}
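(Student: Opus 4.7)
The plan is to argue by contradiction: suppose $I\idealproper\GenK$ is pseudoprime and projective. By theorem~\ref{thm_projective_ideals}, $I=\bigoplus_{\lambda\in\Lambda}e_{S_\lambda}\GenK$ for a family $(e_{S_\lambda})_{\lambda\in\Lambda}$ of nonzero mutually orthogonal idempotents. The strategy is to produce a set $V\subseteq(0,1)$ with neither $e_V\in I$ nor $e_{\co V}\in I$, contradicting pseudoprimality via theorem~\ref{thm_pseudoprime}(4).

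When $\Lambda$ is finite the direct sum collapses to the principal ideal $e_T\GenK$ with $T=\bigcup_{\lambda\in\Lambda}S_\lambda$, and $e_T\ne 1$ by properness. Take a sequence in $\co T$ tending to $0$ and split it into two disjoint subsequences $V,W\subseteq\co T$, each with $0$ in its closure. Then $V\cap\co T=V$ is non-null, so $V\not\subseteq T$ essentially and $e_V\notin e_T\GenK$; also $\co V\cap\co T\supseteq W$ is non-null, so $e_{\co V}\notin e_T\GenK$. This yields the contradiction in the finite case.

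When $\Lambda$ is infinite, pick disjoint countably infinite subfamilies $\{\lambda_n\}_n$ and $\{\mu_n\}_n$ of $\Lambda$ and disjointify their union: enumerate the sets as $X_1,X_2,\dots$ and set $X'_k=X_k\setminus\bigcup_{j<k}X_j$. The removed piece $X_k\setminus X'_k=\bigcup_{j<k}(X_k\cap X_j)$ is a finite union of intersections that are null by orthogonality ($e_{X_k}e_{X_j}=0$), hence $e_{X'_k}=e_{X_k}$ while the $X'_k$ are literally pairwise disjoint. Let $A$ (respectively $B$) be the union of those $X'_k$ coming from the $\{\lambda_n\}$-family (respectively $\{\mu_n\}$-family); then $A\cap B=\emptyset$. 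Suppose $e_A\in I$, say $e_A=\sum_{i=1}^N y_i e_{S_{\nu_i}}$; since $\{\lambda_n\}$ is infinite and the $\nu_i$'s are finite in number, pick $n$ with $\lambda_n\notin\{\nu_i\}$. Writing $X'_{k}=S'_{\lambda_n}\subseteq A$ for the relevant index $k$ and multiplying both sides of the representation by $e_{S_{\lambda_n}}=e_{S'_{\lambda_n}}$ yields
\[
e_{S_{\lambda_n}}=e_A\,e_{S'_{\lambda_n}}=\sum_{i=1}^N y_i e_{S_{\nu_i}}e_{S_{\lambda_n}}=0
\]
by orthogonality, contradicting $e_{S_{\lambda_n}}\ne 0$. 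Hence $e_A\notin I$, and symmetrically $e_B\notin I$. By pseudoprimality $e_{\co A},e_{\co B}\in I$. Since $A\cap B=\emptyset$ we have $\co A\cup\co B=(0,1)$, and lemma~\ref{lemma_principal_ideals}(1) forces $e_{\co A}\GenK+e_{\co B}\GenK=(e_{\co A}\vee e_{\co B})\GenK=\GenK\subseteq I$, contradicting properness.

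The main obstacle is that orthogonality $e_{S_\lambda}e_{S_\mu}=0$ guarantees only that the pairwise intersections $S_\lambda\cap S_\mu$ are null (not literally empty), and a countable union of null sets in $\mathcal S$ need not be null. The disjointification step resolves this by replacing the chosen countable subfamily with literally disjoint sets whose idempotents are unchanged, which is what makes $A\cap B=\emptyset$ genuine and hence $e_{\co A}\vee e_{\co B}=1$.
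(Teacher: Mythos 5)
Your argument is correct, and it runs on the same engine as the paper's proof --- extract a countable subfamily of the orthogonal generators, disjointify it (you by set subtraction, the paper by truncating each $S_n$ near $0$), and then play the dichotomy forced by pseudoprimality against the fact that every element of the direct sum is a \emph{finite} combination of the generators --- but the combinatorial split is different. The paper splits each individual generator set $S_n$ into two pieces $T_n\cup U_n$ both accumulating at $0$, uses pseudoprimality to force one of $e_{\bigcup_n T_n}$, $e_{\bigcup_n U_n}$ into the ideal, and only then derives the contradiction from the finiteness of its representation; you instead split the index family into two infinite halves, prove directly by the finite-representation argument that neither $e_A$ nor $e_B$ lies in $I$, and then contradict properness. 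A further difference is that the paper disposes of the finitely-generated case by citing the corollary to theorem~\ref{thm_double_orthog} (a proper pseudoprime ideal is not countably generated), whereas you handle it by an explicit elementary argument; both are legitimate. One small economy available in your infinite case: since $A\cap B=\emptyset$ gives $e_Ae_B=0$, the definition of pseudoprime already yields $e_A\in I$ or $e_B\in I$, so the detour through $e_{\co A}$, $e_{\co B}$ and lemma~\ref{lemma_principal_ideals} is not needed.
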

\begin{proof}
Suppose that $P$ is a proper pseudoprime ideal generated by a family $\mathcal E$ of mutually orthogonal idempotents. Let $(e_{S_n})_{n\in\N}$ be a family of different elements of $\mathcal E$ ($P$ is not finitely generated by the corollary to theorem \ref{thm_double_orthog}). For $n\in\N$, $e_{S_1} e_{S_n}=\cdots = e_{S_{n-1}}e_{S_n}=0$, so there exists $\eta\in (0,1)$ such that $S_1\cap S_n \cap (0,\eta)=\cdots = S_{n-1}\cap S_n\cap (0,\eta)=\emptyset$. As $e_{S_n}=e_{S_n\cap (0,\eta)}$, we may suppose that $S_m\cap S_n=\emptyset$, $\forall m\ne n$. As w.l.o.g.\ for each $n$, $e_{S_n}\ne 0$, i.e., $0\in \overline{S_n}$, we can write $S_n=T_n\cup U_n$ with $0\in \overline{T_n}$, $0\in \overline{U_n}$ and $T_n\cap U_n=\emptyset$. Then consider $T=\bigcup_{n\in\N} T_n$ and $U=\bigcup_{n\in\N} U_n$. As $T\cap U=\emptyset$, $e_Te_U= 0$, hence $e_T\in P$ or $e_U\in P$.
By symmetry, we may suppose that $e_T\in P$. Then $e_T=a_1 e_{V_1}+\cdots + a_m e_{V_m}$, for some $m\in\N$, $a_j\in\GenK$ and $e_{V_j}\in\mathcal E$. Let $n\in\N$. If $e_{S_n}\notin \{e_{V_1},\dots, e_{V_m}\}$, then $e_{T_n} = e_T e_{S_n} = 0$ by orthogonality, a contradiction. So $\{e_{S_n}:n\in\N\}\subseteq \{e_{V_1},\dots, e_{V_m}\}$, a contradiction.
\end{proof}
In particular, there exist pure ideals of $\GenK$ that are not projective.

\section{$\GenK$-linear maps}
In analogy with the classical Hahn-Banach extension property, one could ask if, for (e.g.) a Banach $\GenK$-module $\Gen$ \cite{Garetto2005, Garetto2006}, a submodule $M$ of $\Gen$ and a continuous $\GenK$-linear functional $\phi$: $M\to\GenK$, there always exists an extension of $\phi$ to a continuous $\GenK$-linear functional $\Gen\to\GenK$.
The following theorem shows that (under some set-theoretic assumption) this formulation of the Hahn-Banach extension property does not hold for the case where $\Gen=\GenK$. Since every Banach $\GenK$-module $\Gen_E$ constructed by means of a classical Banach space $E$ \cite{Garetto2005} contains $\GenK$ as a topological submodule, this formulation of the Hahn-Banach extension property also does not hold for any such $\GenK$-module. For a more restricted version of a Hahn-Banach extension property on Banach $\GenK$-modules, see however \cite{Mayerhofer2006} (the restriction there is that the obtained extension is merely $L$-linear for a certain subfield $L$ of $\GenK$). We also like to mention that the Hahn-Banach extension property holds for nonarchimedean normed linear spaces over ${}^\rho\K$ \cite{Luxemburg}.

\begin{thm}\label{thm_Hahn-Banach}
Assume that $2^{\aleph_0}<2^{\aleph_1}$ (e.g., assume the continuum hypothesis).
Then there exists an ideal $I\idealproper\GenK$ and a continuous $\GenK$-linear map $\phi$: $I\to\GenK$ that cannot be extended to a $\GenK$-linear map $\psi$: $\GenK\to\GenK$.
\end{thm}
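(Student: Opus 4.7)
I plan to produce a proper ideal $I\idealproper\GenK$ together with a family $\{\phi_\lambda:\lambda\in\{0,1\}^{\aleph_1}\}$ of continuous $\GenK$-linear maps $I\to\GenK$, and then argue by cardinality that, under $2^{\aleph_0}<2^{\aleph_1}$, the set of $\phi_\lambda$'s admitting a $\GenK$-linear extension $\psi:\GenK\to\GenK$ is strictly smaller than the whole family. The guiding observation is that any such $\psi$ is multiplication by $\psi(1)\in\GenK$, so extendability of $\phi_\lambda$ means the existence of a single $a\in\GenK$ satisfying a prescribed family of equations indexed by $\aleph_1$; the ideal $I$ will be arranged so that the effect of $a$ on these equations is captured by the restriction of $a$ to a countable set.

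Concretely, I would fix an almost-disjoint family $\{A_\alpha:\alpha<\aleph_1\}$ of infinite subsets of $\N$ (provided by ZFC), put $S_\alpha=\{1/n:n\in A_\alpha\}\in\mathcal S_1$, and observe that each $e_{S_\alpha}\in\GenK$ is a nonzero idempotent with $e_{S_\alpha}e_{S_\beta}=e_{S_\alpha\cap S_\beta}=0$ for $\alpha\ne\beta$. I set $I=\bigoplus_{\alpha<\aleph_1}e_{S_\alpha}\GenK$, which is an internal direct sum thanks to this mutual orthogonality, is projective and pure by theorem~\ref{thm_projective_ideals}, and is proper because every element of $I$ has a representative vanishing near $0$ outside the countable set $\{1/n:n\in\N\}$, whereas $1\in\GenK$ does not. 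For each $\lambda\in\{0,1\}^{\aleph_1}$ I would define $\phi_\lambda$ on the generators by $\phi_\lambda(e_{S_\alpha})=\lambda_\alpha e_{S_\alpha}$ and extend $\GenK$-linearly; writing $x=\sum_{\alpha\in F}c_\alpha e_{S_\alpha}$ with $F$ finite, the mutual orthogonality yields $\sharpnorm{x}=\max_{\alpha\in F}\sharpnorm{c_\alpha e_{S_\alpha}}$, so $\sharpnorm{\phi_\lambda(x)}\le\sharpnorm{x}$ and $\phi_\lambda$ is continuous.

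The concluding step is a cardinality count. A $\GenK$-linear extension of $\phi_\lambda$ is multiplication by some $a\in\GenK$, and extendability is the condition $ae_{S_\alpha}=\lambda_\alpha e_{S_\alpha}$ for every $\alpha$. This condition only depends on the germ at $0$ of a representative $(a_\eps)_\eps$ restricted to $\bigcup_\alpha S_\alpha\subseteq\{1/n:n\in\N\}$, so the extendable $\phi_\lambda$'s form the image of a map from $\GenK/\Ann(I)$, and the latter embeds into the germs of $\K$-valued functions on the countable set $\{1/n:n\in\N\}$ modulo negligibility; this gives at most $|\K|^{\aleph_0}=2^{\aleph_0}$ extendable maps. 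Since $|\{0,1\}^{\aleph_1}|=2^{\aleph_1}>2^{\aleph_0}$ by hypothesis, there must be some $\lambda$ for which $\phi_\lambda$ admits no $\GenK$-linear extension. I expect the main obstacle to be making the cardinality reduction fully rigorous: one needs to verify that the values of $a$ off $\bigcup_\alpha S_\alpha$ really are unconstrained by the extension equations (so that $\GenK/\Ann(I)$ indeed injects into germs on a countable set), which comes down to the specific choice of $S_\alpha\subseteq\{1/n:n\in\N\}$ forcing $\Ann(I)$ to contain all elements supported, near $0$, off $\{1/n:n\in\N\}$.
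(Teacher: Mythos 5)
Your proposal is correct, and it follows the same overall strategy as the paper: produce $\aleph_1$ mutually orthogonal nonzero idempotents all supported inside the countable set $T=\{1/n:n\in\N\}$, exhibit $2^{\aleph_1}$ distinct continuous $\GenK$-linear functionals on the ideal they generate, and observe that any $\GenK$-linear extension to $\GenK$ is multiplication by a single element whose action on the ideal is determined by its restriction to $T$, of which there are at most $\abs{\K}^{\aleph_0}=2^{\aleph_0}$. The two steps where you genuinely diverge are both simplifications. First, the paper obtains the orthogonal family by transfinite recursion, repeatedly applying Lemma \ref{lemma_countable_subset_of_prime} inside a prime ideal of $e_T\GenK$; you get it for free from an almost disjoint family $\{A_\alpha\}_{\alpha<\aleph_1}$ on $\N$, using the pleasant fact that $e_{S_\alpha}e_{S_\beta}=e_{S_\alpha\cap S_\beta}=0$ whenever $S_\alpha\cap S_\beta$ is finite (hence bounded away from $0$) --- so exact disjointness is not needed. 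Second, the paper constructs its $2^{\aleph_1}$ functionals by another transfinite extension process, verifying the bound $\abs{\psi(y)}\le\abs{y}$ at each successor and limit stage; your diagonal maps $\phi_\lambda(e_{S_\alpha})=\lambda_\alpha e_{S_\alpha}$ are defined in one stroke, with well-definedness and the contraction property $\sharpnorm{\phi_\lambda(x)}\le\sharpnorm{x}$ following immediately from orthogonality (multiplying by $e_{S_\beta}$ isolates the $\beta$-component). Your closing worry is unfounded: you do not need the values of $a=\psi(1)$ off $T$ to be unconstrained, only irrelevant, and that is immediate since $e_{\co T}\in\Ann(I)$ (every $S_\alpha\subseteq T$), so $\lambda$ is determined by $ae_T\in e_T\GenK$ and $\abs{e_T\GenK}\le\abs{\K^T}=2^{\aleph_0}<2^{\aleph_1}$; the paper's reduction to Gaussian-rational representatives is not needed for this count.
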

\begin{proof}
Let $S=\{\frac{1}{n}:n\in\N\} \subseteq(0,1)$. Let $J$ be a prime ideal of the ring $e_S\GenK$.\\
By transfinite recursion, we define for each countable non-limit ordinal $\zeta$ a subset $T_\zeta\subseteq S$ as follows. Let $T_\xi$ be defined for each non-limit ordinal $\xi < \zeta$ such that $e_{T_\xi}\in J$. Let $I_\zeta= \lspan{e_{T_\xi}: \xi < \zeta}\subseteq J$. Since $\zeta$ is countable, we find by lemma \ref{lemma_countable_subset_of_prime} $T_\zeta\subseteq S$ with $e_{T_\zeta}\in J\cap\Ann(I_\zeta)\setminus\{0\}$. In particular, $e_{T_\zeta}\in J\setminus I_\zeta$ and $e_{T_\xi} e_{T_\zeta}=0$, for each non-limit ordinal $\xi<\zeta$. Also for limit ordinals $\zeta$, we denote $I_\zeta=\lspan{e_{T_\xi}: \xi < \zeta}$.\\
Let $\omega^+$ be the least ordinal of uncountable cardinality. We show that there exist continuous linear functionals on $I_{\omega^+}$ that cannot be extended to a linear map $\GenK\to\GenK$.\\
Let $\zeta<\omega^+$ be an ordinal. We show that any $\phi$: $I_\zeta\to\GenK$ with $\abs{\phi(x)}\le\abs{x}$, $\forall x\in I_\zeta$ can be extended to a linear map $\psi$: $I_{\zeta+1}=I_{\zeta} + e_{T_\zeta}\GenK\to \GenK$ with $\abs{\psi(y)}\le\abs{y}$, $\forall y\in I_{\zeta+1}$.
Define $\psi(e_{T_\zeta})$ as the element with representative $(c_\eps)_\eps$ where $\abs{c_\eps}\le 1$, $\forall\eps\in T_\zeta$ and $c_\eps=0$, $\forall\eps\in \co T_\zeta$. By definition, $\abs[]{\psi(e_{T_\zeta})}\le e_{T_\zeta}$.\\
We show that the map $\psi$: $I_{\zeta+1}\to\GenK$: $\psi(x+\lambda e_{T_\zeta}) = \phi(x)+\lambda\psi(e_{T_\zeta})$ ($x\in I_\zeta$, $\lambda\in\GenK$) is well-defined. For each $x\in I_{\zeta}$, $x e_{T_\zeta}=0$. So if $x+\lambda e_{T_\zeta}=0$, multiplication by $e_{T_\zeta}$ shows that $x= \lambda e_{T_\zeta}=0$. Then also $\abs{\phi(x)+\lambda\psi(e_{T_\zeta})}\le\abs\lambda e_{T_\zeta}=0$, and $\phi(x)+\lambda\psi(e_{T_\zeta})=0$.\\
Further, for $y=x+\lambda e_{T_\zeta}\in I_{\zeta+1}$ ($x\in I_\zeta$, $\lambda\in\GenK$),
\[
\abs{\psi(y)}\le \abs{\phi(x)}+\abs{\lambda \psi(e_{T_\zeta})}
\le \abs{x}+\abs{\lambda e_{T_\zeta}}
=\abs{x+\lambda e_{T_\zeta}}(e_{\co T_\zeta} + e_{T_\zeta})=\abs{y}.
\]
Also for a limit ordinal $\zeta$, it is clear that $\psi$: $I_\zeta\to\GenK$: $\psi(x)=\phi_\xi(x)$, if $x\in I_\xi$ defines a linear map satisfying $\abs{\psi(x)}\le\abs{x}$, as soon as the same holds for all $\phi_\xi$, $\xi<\zeta$.\\
Now for each non-limit ordinal $\zeta$, we have at least two choices for defining $\psi(e_{T_\zeta})$, e.g.\ $\psi(e_{T_\zeta})=0$ or $\psi(e_{T_\zeta})=e_{T_\zeta}$. Denoting $\Lambda$ the set of all countable non-limit ordinals and $I_{\omega^+}'$ the set of all continous linear functionals on $I_{\omega^+}$, we find a surjective map from $I_{\omega^+}'$ onto the set $\{(\phi(e_{T_\zeta}))_{\zeta\in \Lambda}: \phi\in I_{\omega^+}'\}$, which can be surjectively mapped onto the set $\{0,1\}^\Lambda$. As the cardinality of $\Lambda$ is $\aleph_1$, the cardinality of $I_{\omega^+}'$ is at least $2^{\aleph_1}$.
If each $\phi\in I_{\omega^+}'$ could be extended to a linear map $\psi$: $e_S\GenK\to\GenK$, we would obtain $2^{\aleph_1}$ different continuous $\GenK$-linear maps $\psi$: $e_S\GenK\to\GenK$. Now $\psi$ is completely determined by the number $\psi(e_S)\in e_S\GenK$, so by one sequence of rational numbers (since any element in $e_S\GenK$ has a representative consisting of $0$ for $\eps\in \co S$, and of $q_1+iq_2$, $q_j\in\Q$, for $\eps\in S$). The cardinality of the set of such sequences is $2^{\aleph_0}<2^{\aleph_1}$, a contradiction.
\end{proof}

\section{Examples}
An ultrafilter $\mathcal U$ is called $\delta$-stable iff the following property holds:\\
let for each $n\in\N$, $J_n\in\mathcal U$; then there exists $J\in\mathcal U$ such that, for each $n\in\N$, $J\setminus J_n$ is finite.
\begin{thm}(\cite{Choquet68}, \cite[\S~7.1]{S&L})
Assume the continuum hypothesis. Then there exists a $\delta$-stable free ultrafilter on $\N$.
\end{thm}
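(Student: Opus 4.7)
The plan is the classical P-point construction: I would build $\mathcal U$ by transfinite recursion of length $\omega_1$, using CH in two places, namely to enumerate the relevant subsets and sequences, and to keep every intermediate filter countably generated.

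Concretely, under CH one has $|\mathcal P(\N)|=\aleph_1$ and $|\mathcal P(\N)^{\N}|=\aleph_1^{\aleph_0}=\aleph_1$, so I would first enumerate all subsets of $\N$ as $(X_\alpha)_{\alpha<\omega_1}$ and all sequences $(B_n)_{n\in\N}$ in $\mathcal P(\N)^{\N}$ as $(\mathcal B_\alpha)_{\alpha<\omega_1}$, using a standard bookkeeping bijection $\omega_1\cong \omega_1\times\omega_1$ to ensure that every such sequence is listed cofinally often in $\omega_1$. I would then recursively construct an increasing chain of proper free filters $(\mathcal F_\alpha)_{\alpha<\omega_1}$ on $\N$, each admitting a countable descending base $(F_n^\alpha)_{n\in\N}$, starting from the Fr\'echet filter at $\alpha=0$ and taking unions at countable limits (countable generation is preserved because a countable union of countable sets is countable).

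At the successor step $\alpha\to\alpha+1$ two tasks have to be carried out. First, to decide $X_\alpha$: either $X_\alpha\cap F$ is infinite for every $F\in\mathcal F_\alpha$, in which case I adjoin $X_\alpha$ and obtain a larger, still countably generated filter $\mathcal F_\alpha'$; or there is $F\in\mathcal F_\alpha$ with $X_\alpha\cap F$ finite, in which case $\co{X_\alpha}\supseteq F\setminus(X_\alpha\cap F)$ already lies in $\mathcal F_\alpha$ (since $\mathcal F_\alpha$ contains the Fr\'echet filter) and no work is needed, so I set $\mathcal F_\alpha'=\mathcal F_\alpha$. Second, if $\mathcal B_\alpha=(B_n)_{n\in\N}\subseteq\mathcal F_\alpha'$, I replace $(B_n)$ by its cumulative intersections to make it descending, then diagonalize by choosing $k_n\in B_n\cap F_n^{\alpha'}$ with $k_n>k_{n-1}$ and letting $B=\{k_n:n\in\N\}$. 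Then $B\setminus B_m\subseteq\{k_0,\ldots,k_{m-1}\}$ is finite while $B\cap F_n^{\alpha'}\supseteq\{k_m:m\ge n\}$ is infinite, so $B$ may be consistently adjoined to form $\mathcal F_{\alpha+1}$; if $\mathcal B_\alpha\not\subseteq\mathcal F_\alpha'$, I simply take $\mathcal F_{\alpha+1}=\mathcal F_\alpha'$.

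Setting $\mathcal U:=\bigcup_{\alpha<\omega_1}\mathcal F_\alpha$ yields a free ultrafilter by the first task above. For $\delta$-stability, given any $(J_n)_n\subseteq\mathcal U$, regularity of $\omega_1$ places all the $J_n$ inside some $\mathcal F_\beta$, and the cofinal enumeration yields $\alpha\ge\beta$ with $\mathcal B_\alpha=(J_n)$; the successor step at $\alpha$ then supplies a pseudo-intersection $B\in\mathcal F_{\alpha+1}\subseteq\mathcal U$ with $B\setminus J_n$ finite for each $n$. The real work of the proof is the diagonal pseudo-intersection step, which hinges on every filter $\mathcal F_\alpha'$ being countably generated; the only genuine bookkeeping subtlety is guaranteeing, via the cofinal listing, that each countable sequence in $\mathcal U$ is handled at a stage where all its members already belong to the filter being extended.
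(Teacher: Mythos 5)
Your construction is correct. Note that the paper does not prove this theorem at all --- it is quoted from Choquet and from Stroyan--Luxemburg --- so there is nothing internal to compare against; your argument is precisely the standard CH construction of a P-point ($\delta$-stable ultrafilter) found in those references: a length-$\omega_1$ recursion through countably generated free filters, deciding each subset of $\N$ and diagonalizing a pseudo-intersection through each countable subfamily, with the regularity of $\omega_1$ and the cofinal bookkeeping guaranteeing that every countable sequence from the final ultrafilter is eventually handled. All the key points (properness after adjoining $X_\alpha$ or $B$, preservation of countable generation at successors and countable limits, and the finiteness of $B\setminus J_n$) are verified correctly.
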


This theorem has the following corollary.
\begin{thm}
Assume the continuum hypothesis. Then there exists $\mathcal F\in P_*(\mathcal S)$ such that $g(\mathcal F)$ is closed (and hence, maximal).
\end{thm}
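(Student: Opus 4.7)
The plan is to convert a $\delta$-stable free ultrafilter on $\N$ (which CH provides by the preceding theorem) into a $\delta$-stable ultrafilter on $(0,1)$ containing the neighborhood-of-$0$ filter, read off a corresponding $\mathcal F \in P_*(\mathcal S)$, and then exploit $\delta$-stability to close $g(\mathcal F)$ under limits. Concretely, fix a decreasing sequence $\eps_n \downarrow 0$ in $(0,1)$ (e.g.\ $\eps_n=1/n$), let $\mathcal V$ be a $\delta$-stable free ultrafilter on $\N$, and define
\[
\mathcal U := \{T\subseteq(0,1): \{n\in\N: \eps_n \in T\} \in \mathcal V\}.
\]
This is an ultrafilter on $(0,1)$; freeness of $\mathcal V$ yields $(0,\eta)\in\mathcal U$ for every $\eta>0$; and $\delta$-stability of $\mathcal V$ transfers to the following property of $\mathcal U$: for every sequence $(T_n)_{n\in\N}$ in $\mathcal U$ there exists $T\in\mathcal U$ such that, for each $n$, $T\setminus T_n$ is contained in some $(\eta_n,1)$ (i.e.\ is bounded away from $0$).

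Now set $\mathcal F = \{S\in \mathcal S : \co S \in \mathcal U\}$. By the lemma just proven, $\mathcal F \in P_*(\mathcal S)$, so $g(\mathcal F)$ is a minimal prime by proposition \ref{prop_minimal_primes} and its closure $\overline{g(\mathcal F)}$ is maximal by theorem \ref{thm_max_ideals}. It therefore suffices to show that $g(\mathcal F)$ itself is closed.

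To this end, let $x\in \overline{g(\mathcal F)}$, fix a representative $(x_\eps)_\eps$, and let $L_n=\{\eps\in(0,1): |x_\eps|\ge \eps^n\}$ be the corresponding sequence of level sets. By theorem \ref{thm_closed_characterization} each $e_{L_n}\in g(\mathcal F)$; unwinding the description $\mathcal I = \mathcal F \cup \{S : 0\notin \overline S\}$ of the cofilter dual to $\mathcal U$, this says $\co L_n\in\mathcal U$ for every $n$. Apply the $\delta$-stability of $\mathcal U$ to the sequence $(\co L_n)_{n\in\N}$ to obtain $T\in\mathcal U$ and $\eta_n>0$ with $T\cap(0,\eta_n)\subseteq \co L_n$, i.e.\ $|x_\eps|<\eps^n$ for all $\eps\in T\cap(0,\eta_n)$. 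Hence $x e_T=0$, so $x= x e_{\co T}$. Setting $S=\co T$ we have $\co S=T\in\mathcal U$; if $0\notin \overline S$ then $e_S=0$ and $x=0\in g(\mathcal F)$, while if $0\in\overline S$ then $S\in\mathcal F$ and $x=xe_S\in g(\mathcal F)$. Either way $x\in g(\mathcal F)$, so $g(\mathcal F)=\overline{g(\mathcal F)}$, which is maximal.

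The only real obstacle is translating the combinatorial formulation of $\delta$-stability on $\N$ (``$J\setminus J_n$ is finite'') into the right analytic form on $(0,1)$ (``$T\setminus T_n$ is bounded away from $0$''), and then matching this latter form with the quantifier structure ``$\forall n\,\exists \eta_n$'' appearing in theorem \ref{thm_closed_characterization}. Everything else is routine bookkeeping with representatives and level sets.
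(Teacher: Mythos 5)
Your proposal is correct and follows essentially the same route as the paper: define $\mathcal F$ from a $\delta$-stable free ultrafilter via the bijection $n\mapsto 1/n$, verify $\mathcal F\in P_*(\mathcal S)$, and then use $\delta$-stability together with the level-set characterization of $\overline{g(\mathcal F)}$ from Theorem \ref{thm_closed_characterization} to produce a single set absorbing all the $L_n$ near $0$. The only point worth making explicit is that the transferred $\delta$-stability of your $\mathcal U$ holds because one may take the witness $T$ inside $\{1/n: n\in\N\}\in\mathcal U$ (so that $T\setminus T_n$ is finite, hence bounded away from $0$); with that remark the argument matches the paper's, which takes $\co T=\{1/n:n\in J\}$ directly.
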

\begin{proof}
Let $\mathcal U$ be a $\delta$-stable free ultrafilter on $\N$. For $S\subseteq (0,1)$, let $1/S=\{1/\eps:\eps\in S\}$.
Let $\mathcal F=\{S\in\mathcal S: \N\cap 1/S\notin \mathcal U\}$.\\
We show that $\mathcal F\in P_*(\mathcal S)$.\\
Let $S$, $T\in \mathcal F$, so there exist $J_1$, $J_2$ $\in \mathcal U$ such that $\N\cap 1/S=\N\setminus J_1$, $\N\cap 1/T=\N\setminus J_2$. Then $\N\cap 1/(S\cup T)=\N\setminus(J_1\cap J_2)\notin\mathcal U$. Further, as $0\in\overline S$, also $0\in\overline{S\cup T}$. Should $0\notin\overline{\co {(S\cup T)}}$, then $(0,\eta)\subseteq S\cup T$, for some $\eta\in (0,1)$, and $J_1\cap J_2$ would be a finite element of the free ultrafilter $\mathcal U$, a contradiction. So $S\cup T\in \mathcal F$.\\
Let $S\in\mathcal S$. Call $J_S=\N\cap 1/S$. If $J_S\notin\mathcal U$, then $S\in\mathcal F$. Otherwise, $\N\cap1/\co S=\N\setminus J_S\notin\mathcal U$, so $\co S\in\mathcal F$.\\
Let $a\in\overline{g(\mathcal F)}$. We show that $a\in g(\mathcal F)$.\\
Let $(L_n)_{n\in\N}$ be a sequence of level sets for $a$.
By theorem~\ref{thm_closed_characterization}, $e_{L_n}\in g(\mathcal F)$, $\forall n$. If $a\ne 0$, there exists $N\in\N$ such that for each $n\in\N$ with $n\ge N$, $L_n\in\mathcal S$, so by \cite[Rem.~4.18]{AJ2001}, $L_n\in\mathcal F$, i.e., $\N\cap 1/L_n=\N\setminus J_n$, for some $J_n\in\mathcal U$. By the $\delta$-stability, there exists $J\in\mathcal U$ such that $J\setminus J_n$ is finite, for each $n\ge N$. Let $T=(0,1)\setminus\{1/n: n\in J\}$. Then $\N\cap 1/T=\N\setminus J$, so $T\in \mathcal F$. As $J\setminus J_n$ is finite, $L_n\cap (0,\eta)\subseteq T$, for some $\eta>0$ (depending on $n$), and $e_{L_n}e_T= e_{L_n\cap T} = e_{L_n}$, for each $n\ge N$, and $a =\lim_n a e_{L_n} = \lim_n a e_{L_n} e_T = a e_T\in g(\mathcal F)$.
\end{proof}

In $\Cnt{}(X)$, there can exist prime ideals that are not $z$-ideals; the same holds in $\GenK$ (the construction of an example is however completely different \cite[2G.1]{GJ}).
\begin{ex}\label{ex_non-z-prime}
There exist infinitely many prime ideals $P_2\supset P_3\supset\cdots\supset P_m\supset\cdots$ of $\GenK$ that are not $z$-ideals. Moreover, $\zclosure{(P_n)}=\zclosure{(P_m)}$, $\forall m,n$.
\end{ex}
\begin{proof}
Let $S_n\in\mathcal S_1$, for each $n\in\N$, with $S_n\cap S_m=\emptyset$ if $n\ne m$.\\
For $m\in\N$, let $\beta_m\in\GenK$ be defined as follows on representatives:
\[
\begin{cases}
(\beta_m)_\eps = \eps^{n^m},& \eps\in S_n, n\in\N\\
(\beta_m)_\eps= 0, &\text{otherwise}.
\end{cases}
\]
Consider the set \[A= \{x\in \GenK: (\exists N\in\N)(\forall n\ge N) (S_n\in\Inv(x))\}.\]
If $x$, $y$ $\in A$, then there exists $N\in\N$ such that for $n\ge N$, $S_n\in\Inv(x)\cap\Inv(y)\subseteq\Inv(xy)$, so $xy\in A$. Hence $A$ is closed under multiplication. Further $0\notin A$, so by proposition \ref{prop_prime_constructor}, there exists a prime ideal $P$ such that $P\cap A=\emptyset$. 
We show that for each $m > 1$, $\beta_{m-1}\notin \rad{m(P) + \beta_m\GenK}$.\\
Suppose that there exist $x\in m(P)$, $y\in\GenK$ and $n\in\N$ such that $x + y \beta_m = \beta_{m-1}^n$.
As $x\in m(P)$, $x=xe_T$ for some $e_T\in P$.
So $e_T\notin A$, and for each $N\in\N$, there exists $k\ge N$ such that $S_k\notin\Inv(e_T)$. By lemma~\ref{lemma_zero-inv}(9), $0\in\overline{S_k\setminus T}$, i.e., $e_{U_k}\ne 0$ with $U_k=S_k\setminus T$. So $y \beta_m e_{U_k} = \beta_{m-1}^n e_{U_k}$. As $U_k\subseteq S_k$, $\beta_m e_{U_k}= \alpha^{k^m} e_{U_k}$, and $y e_{U_k} = \alpha^{n k^{m-1}-k^m} e_{U_k}=\alpha^{-k^{m-1}(k-n)} e_{U_k}$. As $n$, $m$ are fixed and $k$ can be chosen arbitrary large, this contradicts the moderateness of $y$.\\
By theorem~\ref{thm_prime_characterization}, $P_m:=\rad{m(P) + \beta_m\GenK}$ is a prime ideal for each $m>1$. Since $\beta_m\in\beta_{m-1}\GenK$ and $\beta_{m-1}\in P_{m-1}\setminus P_m$, we have $P_m\subsetneqq P_{m-1}$, for all $m>1$.\\
Finally, by proposition~\ref{prop_level_sets}, for each $m\in\N$,
\[\Inv(\beta_m)=\{S\in {\mathcal S}_1: (\exists N\in\N)(\exists \eta >0)(S\cap (0,\eta)\subseteq S_1\cup\cdots\cup S_N)\},\]
so for each $n$, $m$, $\Inv(\beta_m)=\Inv(\beta_n)$, hence $\zclosure{(\beta_m\GenK)}=\zclosure{(\beta_n\GenK)}$. By propositions \ref{prop_z-closure}(2) and \ref{prop_z_part}(1), $\zclosure{(P_m)}=\zclosure{(m(P) + \beta_m\GenK)}=\zclosure{m(P)} + \zclosure{(\beta_m\GenK)}$.
Hence $\zclosure{(P_m)}=\zclosure{(P_n)}$, for each $m$, $n$, and $P_m\subsetneqq \zclosure{(P_m)}$, for $m>1$.
\end{proof}
As a result, we can answer a question posed in~\cite{AJ2001}. The Krull dimension of $\GenK$ is defined as the supremum of $n\in\N$ such that there exist $P_0$, $P_1$, \dots, $P_n$ prime ideals of $\GenK$ with $P_0\subset P_1\subset\cdots\subset P_n$.
\begin{thm}
The Krull dimension of $\GenK$ is infinite.
\end{thm}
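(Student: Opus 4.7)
The plan is to read off the result directly from Example~\ref{ex_non-z-prime}, which already produced an infinite strictly descending chain of prime ideals
\[
P_2 \supsetneq P_3 \supsetneq \cdots \supsetneq P_m \supsetneq \cdots
\]
in $\GenK$. Since the Krull dimension is defined as the supremum of the lengths of finite ascending chains of primes, it suffices to observe that every such finite chain can be obtained by reversing a finite initial segment of the descending chain above.

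Concretely, I would fix an arbitrary $n\in\N$ and consider the finite subchain $P_{n+2},P_{n+1},\ldots,P_3,P_2$ extracted from the example. Reading it in reverse gives the strict inclusion chain
\[
P_{n+2}\subsetneq P_{n+1}\subsetneq \cdots \subsetneq P_3\subsetneq P_2
\]
of length $n$ in the spectrum of $\GenK$. Hence the Krull dimension of $\GenK$ is at least $n$ for every $n\in\N$, and so it is infinite.

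The only nontrivial content was already absorbed into the construction of Example~\ref{ex_non-z-prime}; there is no further obstacle here. I would keep the write-up to essentially a single sentence pointing back to that example and noting that reversing a descending chain yields an ascending chain of the same (arbitrary finite) length.
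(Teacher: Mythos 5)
Your proposal is correct and is exactly how the paper obtains the theorem: it is stated as an immediate consequence of Example~\ref{ex_non-z-prime}, whose strictly decreasing chain $P_2\supsetneqq P_3\supsetneqq\cdots$ yields strict prime chains of every finite length. No further argument is needed.
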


\begin{ex}\label{ex_z-prime}
There exists a prime $z$-ideal of $\GenK$ that is neither a minimal nor a maximal prime ideal.
\end{ex}
\begin{proof}
Let $S_n, T_n\in\mathcal S_1$, for each $n\in\N$, with $T_n\cap T_m=S_n\cap S_m=\emptyset$ if $n\ne m$ and $\bigcup_{n\in\N} S_n= (0,1)$.
Further, let $0\in\overline{S_n\cap T_m}$, $\forall n,m\in\N$, let $(1/n,1)
\subseteq T_1\cup\cdots \cup T_n$, $\forall n\in\N$.
The sequence $(T_n)_{n\in\N}$ can be constructed starting from the sequence $(S_n)_{n\in\N}$, using the property that for $S\in\mathcal S_1$ and $\eta>0$, it is always possible to find $T\subseteq S$ with $(\eta,1)\cap S\subseteq T$, $T\in\mathcal S_1$ and $S\setminus T\in\mathcal S_1$.
Define $\beta$, $\gamma$ as follows on representatives:
\begin{align*}
\begin{cases}
\beta_\eps = \eps^n,&\eps\in S_n, n\in\N\\
\beta_\eps = 0,&\text{otherwise},
\end{cases}
&
&
\begin{cases}
\gamma_\eps = \eps^{n+m},&\eps\in S_n\cap T_m, n,m\in\N\\
\gamma_\eps = 0,&\text{otherwise}.
\end{cases}
\end{align*}
Further, call $\widetilde S_n=S_1\cup\cdots\cup S_n$ and $\widetilde T_n=T_1\cup\cdots\cup T_n$, for $n\in\N$. Let
\[
\mathcal F_0 := \{S_k:k\in\N\}\cup \big\{\bigcup_{n\in\N} S_n \cap \widetilde T_{m_n}: (m_n)_{n\in\N}\in\N^{\N}\big\}
\subseteq \mathcal S.
\]
We show that $g(\mathcal F_0)\cap\{\gamma^n:n\in\N\}=\emptyset$.\\
Let $x\in g(\mathcal F_0)$. There exist $k\in\N$, $(m_n)_{n\in\N}\in \N^{\N}$ such that $x=xe_S$ with
\[
S=\widetilde S_k \cup \bigcup_{n\in\N}(S_n \cap \widetilde T_{m_n})
.
\]
In particular, for $l>k$ and $m>m_l$, $S\cap (S_l\cap T_m) =\emptyset$, so $e_{S_l\cap T_m}\in Z(x)$. But $S_l\cap T_m\in\Inv(\gamma^n)$. So $x\ne \gamma^n$, $\forall n\in\N$.\\
By proposition \ref{prop_prime_constructor}, there exists a prime ideal $P\supseteq g(\mathcal F_0)$ with $\gamma\notin P$.\\
We show that $\beta\notin \zclosure{(m(P) + \gamma\GenK)}$.\\
Suppose that $x_1 e_S + x_2 \gamma=y$, for some $x_1$, $x_2\in\GenK$, $e_S\in P$ and $y\in\GenK$ with $Z(y)=Z(\beta)$. As $1\notin P$, $S\cup \bigcup_{n\in\N} (S_n\cap \widetilde T_{m_n})
\ne (0,1)$, for any $(m_n)_{n\in\N}$. So $S$ cannot have the property that $(\forall n\in\N)(\exists m\in\N)(S_n\setminus \widetilde T_m\subseteq S)$. I.e., $(\exists N\in\N)(\forall m\in\N)(S_N\setminus\widetilde T_m\not\subseteq S)$. For any $m\in\N$, let $\eps_m\in (S_N\setminus\widetilde T_m)\setminus S$. Let $T=\{\eps_n:n\in\N\}$. By definition of $S_n$ and $T_n$, $\eps_n\le 1/n$, so $0\in\overline T$. Further $T\cap S=\emptyset$, so $e_S e_T = 0$; for each $m$, $T\cap \widetilde T_m$ is finite, so $\gamma e_T = 0$. Hence $y e_T = 0$. But $T\subseteq S_N$, so $T\notin Z(y)=Z(\beta)$, a contradiction.\\
So $\widetilde P:=\zclosure{(m(P)+\gamma\GenK)}$ is a prime $z$-ideal with $m(P)\subseteq \widetilde P\subseteq \overline{m(P)}$. As $\beta = \lim_n\beta e_{\widetilde S_n}$, $\beta\in \overline {m(P)}\setminus \widetilde P$ and $\gamma\in \widetilde P\setminus m(P)$. Hence $m(P)\subsetneqq \widetilde P\subsetneqq \overline{m(P)}$.
\end{proof}

\end{document}